\numberwithin{equation}{section}
\def\R{\mathbb R}
\def\N{\mathbb N}
\def\e{\varepsilon}
\def\de{\partial}
\def\00{{\bf 0}}
\newcommand\res{\mathop{\hbox{\vrule height 7pt width .3pt depth 0pt \vrule height .3pt width 5pt depth 0pt}}\nolimits}
\def\FF{\mathcal{F}}
\newcommand{\ang}[1]{\langle#1\rangle}
\theoremstyle{plain}
\newtheorem{theorem}{Theorem} [section]
\newtheorem{corollary}[theorem]{Corollary}
\newtheorem{lemma}[theorem]{Lemma}
\newtheorem{proposition}[theorem]{Proposition}
\theoremstyle{definition}
\newtheorem{remark}[theorem]{Remark}
\newtheorem*{ack}{Acknowledgements}
\renewcommand{\epsilon}{\varepsilon}
\renewcommand{\phi}{\varphi}
\renewcommand{\bar}{\overline}
\title{Anisotropic min-max via phase transitions}
\author{Antonio De Rosa and Alessandro Pigati}
\date{}
\begin{document}

\maketitle

\begin{abstract}
We develop a PDE-based approach to the min-max construction of nontrivial integer rectifiable varifolds that are stationary with respect to anisotropic surface energies on closed Riemannian manifolds, in codimension one. Specifically, we study the anisotropic analogue of the Allen--Cahn energy and establish a Modica-type gradient bound for its critical points. Using this in conjunction with certain estimates for stable solutions, we then prove that the energy densities of stable or bounded-Morse-index critical points of its rescalings %the rescaled anisotropic Allen-Cahn functional
concentrate along an integer rectifiable varifold that is stationary for the underlying anisotropic integrand. As a consequence, we construct a (possibly singular) anisotropic min-max hypersurface via Allen--Cahn,
obtaining an analogue of the result of Hutchinson--Tonegawa in the anisotropic setting.
\end{abstract}

\section{Introduction}
The Allen--Cahn equation has become a central tool in the study of minimal hypersurfaces and related variational problems in geometric analysis. Originally introduced by Allen and Cahn~\cite{AllenCahn1977} as a diffuse-interface model for phase transitions, this reaction-diffusion equation reads
\begin{equation*}\label{flow}
    \partial_t u - \Delta u + \frac{W'(u)}{\varepsilon^2} = 0.
\end{equation*}
Here $W$ denotes a double-well potential with global minima at $\pm 1$, the prototypical example being $W(u) = \frac{1}{4}(1 - u^2)^2$. Modica and Mortola~\cite{ModicaMortola1977} observed that the associated energy functional
\begin{equation}\label{eq:energy}
    \int \left[\varepsilon \frac{|\nabla u|^2}{2} + \frac{W(u)}{\varepsilon}\right],
\end{equation}
$\Gamma$-converges, as the interfacial width parameter $\varepsilon \to 0$, to the area functional of sets of finite perimeter. This observation revealed a deep connection between the Allen--Cahn variational framework and the theory of minimal hypersurfaces in geometric measure theory, as originally suggested by De Giorgi. 

In particular, critical points of \eqref{eq:energy}, that is, functions $u$ satisfying
\begin{equation*}
    -\varepsilon \Delta u + \frac{W'(u)}{\varepsilon} = 0,
\end{equation*}
have been shown to concentrate on minimal hypersurfaces as $\e \to 0$. This phenomenon dates back to the foundational work of Modica~\cite{Modica1987} and Sternberg~\cite{Sternberg1988}, and has since developed into a well-established field. We refer the reader to the surveys~\cite{Pacard2012,Savin2009,Tonegawa2008} for an overview.

Building on this idea and on the works of Ilmanen \cite{Ilmanen}, Hutchinson and Tonegawa~\cite{HutchinsonTonegawa2000}, Tonegawa~\cite{Tonegawa2005}, Tonegawa and Wickramasekera~\cite{TonegawaWickramasekera2012}, and Wickramasekera~\cite{Wickramasekera2014}, Guaraco~\cite{Guaraco2018} established an existence theorem for closed minimal hypersurfaces in arbitrary closed Riemannian manifolds by adapting the mountain-pass construction to the Allen--Cahn functional~\eqref{eq:energy}. His work provided a PDE-based alternative to the Almgren--Pitts min-max theory from~\cite{Almgren1965,Pitts1981}. The Almgren--Pitts theory has had a profound impact on geometric analysis and was later refined by Marques and Neves~\cite{MarquesNeves2014,MarquesNeves2017} in their proof of the Willmore conjecture, %and in the construction of infinitely many minimal hypersurfaces,
leading also to the recent resolution of Yau’s conjecture by Song~\cite{Song2023}.
On the other hand, the PDE-based min-max theory has since been extended to a variety of settings
and proved capable of achieving more refined results which predated analogues in the Almgren--Pitts framework, such as the multiplicity-one result of~\cite{ChodoshMantoulidis2023}. Collectively, these developments have established the Allen--Cahn framework as a flexible and powerful analytic alternative to the Almgren--Pitts one, allowing geometric-measure-theoretic ideas to be realized within a purely elliptic PDE setting,
with promising analogues in higher codimension:
see \cite{LRGamma,JS,ABO,LR, BBO, SternThesis, ChengThesis} for the Ginzburg--Landau model with no magnetic field and \cite{PigSte, PPSGamma} for the abelian Higgs model
(as well as \cite{SternS1,PPS} for slightly different settings).

Parallel to these isotropic developments, a significant body of work has emerged on minimal surfaces with respect to anisotropic surface energies, which naturally arise in crystalline surface tension models, capillarity problems, and in the modeling of interfaces between distinct materials. We refer to the survey~\cite{DeR-survey} for an overview of the theory of anisotropic minimal surfaces. The anisotropic area functional generalizes the classical area by introducing a convex, one-homogeneous integrand depending on both position and normal direction. While existence and regularity of minimizers for anisotropic energies are by now well understood~\cite{Alm68,Alm76,DLDRG,DPDRG2,DR}, a satisfactory existence and regularity theory for stationary solutions remains incomplete~\cite{allard1986integrality,DRR,DRT}, due largely to the absence of a monotonicity formula for density ratios~\cite{characterizationareaallard}. Allard~\cite{AllardStab} conjectured the existence of closed anisotropic minimal hypersurfaces in closed Riemannian manifolds, but an anisotropic counterpart of the Almgren--Pitts min-max theory had remained elusive until recently. In~\cite{DPDRminmax,DePhilippisDeRosaLi2025}  De Philippis, the first-named author, and Li have now resolved this problem by developing an anisotropic version of the Almgren--Pitts theory, proving the existence of closed anisotropic minimal hypersurfaces with essentially optimal regularity in any closed Riemannian manifold. 

The goal of the present paper is to provide a PDE-based alternative to the latter approach to the anisotropic min-max construction. More precisely, we develop an anisotropic version of the isotropic Allen--Cahn min-max construction by Guaraco \cite{Guaraco2018}. 
Let $(M^n,g)$ be a closed Riemannian manifold with $n \ge 2$. We consider a smooth even function
\[
    F : UT(M) \to (0,\infty), \quad F(x,v) = F(x,-v),
\]
defined on the unit tangent bundle, and extend $F$ $1$-homogeneously to the tangent bundle:
\[
    F : TM \to [0,\infty), \quad F(x,\lambda v) = \lambda F(x,v)\text{ for all } \lambda \ge 0,\ (x,v) \in TM.
\]
We assume that $F$ is uniformly convex in directions orthogonal to the radial one, namely
\[
    D^2F_x(v)[w,w]\ge \lambda|w|^2\quad\text{for }v\in UT_x(M)\text{ and }w\in T_xM,\ w\perp v,
\]
for all $x \in M$, where $F_x$ denotes the restriction of $F$ to $T_x M$.  
The anisotropic surface area of an embedded hypersurface $\Sigma^{n-1} \subset M^n$ is defined by
\[
    \mathcal{F}(\Sigma) := \int_\Sigma F(x,\nu_x)\, d\mathcal{H}^{n-1}_g(x),
\]
and the anisotropic Allen--Cahn energy of a function $u:M\to \R$ by
\[
    E_\varepsilon(u) := \int_M \left[ \varepsilon \frac{F(x,\nabla u(x))^2}{2} + \frac{W(u(x))}{\varepsilon} \right] \,d\operatorname{vol}_g(x).
\]
All the relevant assumptions on the double-well potential $W$ will be recalled in \eqref{dw1}--\eqref{dw2} in Section~\ref{sec:Prel}. The energy functional $E_\e$ has been recently investigated by Cicalese, Nagase, and Pisante \cite{CicaleseNagasePisante2010}, and the associated parabolic equation has been the subject of an extensive literature \cite{ElliottSchatzle1996,ElliottSchatzle1997,GigaOhtsukaSchatzle2006,Laux2020,MatanoMoriNara2019}.  Bouchitté \cite{Bouchitte1990} proved in a more general framework that $E_\varepsilon$ $\Gamma$-converges to $c_W \cdot \mathcal{F}$, where $c_W := \int_{-1}^1 \sqrt{2W}$. We provide a shorter proof of this $\Gamma$-convergence in Theorem~\ref{thm:GammaConv}, tailored for $E_\e$. We also remark that in recent years there has been a growing literature on the $\Gamma$-convergence to anisotropic perimeters of heterogeneous variants of the Allen--Cahn energy functional in a periodic medium \cite{ChoksiFonsecaLinVenkatraman2022,CristoferiFonsecaHagertyPopovici2019,FeldmanMorfe2023}. 

On the contrary, nothing was known in the literature for the convergence of stable (or bounded-Morse-index) critical points for $E_\e$ to anisotropic minimal hypersurfaces, as $\e \to 0$;
actually, as explained in the body of the paper, in order to have a $C^2$ functional
we need to perturb $F$ slightly in the variational construction, although ultimately we still obtain critical points for the original $E_\epsilon$. Establishing this convergence is a crucial step to extend the isotropic min-max construction of Guaraco~\cite{Guaraco2018} to the anisotropic Allen--Cahn framework. The main obstruction is the lack of a suitable monotonicity formula in the anisotropic setting \cite{characterizationareaallard}, which makes it a challenging task to prove that the limiting varifold has no diffuse part, and in particular that it is rectifiable.
For the isotropic Allen--Cahn functional, the monotonicity formula is deduced by a celebrated gradient bound proved by Modica \cite{Modica}. Our first  main contribution is the following anisotropic version of Modica's bound  (see Theorem~\ref{thm:modicabound} for the details).

\begin{theorem}\label{thminf:modicabound}
    Let $u : M \to [-1,1]$ be a critical point of $E_\varepsilon$. Then
    \begin{equation}\label{eq:modicabound}
        \frac{\varepsilon F^2(\nabla u)}{2} \le \frac{W(u)}{\varepsilon} + C(M^n,g,F).
    \end{equation}
\end{theorem}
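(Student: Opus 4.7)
The plan is to apply the maximum principle to the anisotropic Modica $P$-function
\[
P := \frac{\varepsilon F^2(x,\nabla u)}{2} - \frac{W(u)}{\varepsilon}.
\]
The Euler--Lagrange equation for $E_\varepsilon$ reads $\varepsilon\,\Div(a(x,\nabla u)) = W'(u)/\varepsilon$ with $a(x,v) := \tfrac12\nabla_v F^2(x,v) = F F_v(x,v)$, and the natural linearization of the anisotropic Laplacian at $u$ is
\[
L\phi := \Div(A(x,\nabla u)\nabla\phi),\qquad A^{ij}(x,v) := \tfrac12\,\partial_{v_i v_j}F^2(x,v),
\]
which is uniformly elliptic on $\{\nabla u \neq 0\}$: by the $2$-homogeneity of $F^2$ we have $A(x,v)\,v\cdot v = F^2(x,v)$, and the transversal convexity assumption covers the directions orthogonal to $v$.

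The heart of the argument is an anisotropic Bochner-type identity for $LP$ on $\{\nabla u \neq 0\}$. I would compute $\nabla P$ by the chain rule, then $LP$ by differentiating the Euler--Lagrange equation in a covariant direction and using the Ricci commutator identities on $M$. The second-order derivatives of $u$ assemble into a quadratic form that the transversal convexity of $F$ makes nonnegative (an anisotropic analogue of the $|\nabla^2 u|^2 \geq (\Delta u)^2/n$ used in the isotropic Modica computation), the $W''(u)F^2(\nabla u)$ contributions cancel exactly as in the isotropic case, and the leftover terms come solely from $F_x$, $F_{xv}$, and the Ricci curvature of $(M,g)$. The target is an inequality of the schematic form
\[
LP + b\cdot\nabla P \;\geq\; (\text{nonnegative Hessian square}) \;-\; C(M,g,F),
\]
with a bounded drift $b$ that vanishes at any critical point of $P$.

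Conclude via the maximum principle: let $x_0 \in M$ be a maximum point of $P$. If $\nabla u(x_0) = 0$ then $F(x_0, 0) = 0$ by $1$-homogeneity, so $P(x_0) = -W(u(x_0))/\varepsilon \leq 0$. Otherwise $L$ is uniformly elliptic near $x_0$ and $\nabla P(x_0) = 0$ produces the algebraic identity $\nabla^2 u \cdot a(\nabla u) = (W'(u)/\varepsilon^2)\nabla u$ (up to an $F_x$ error); combined with the trace condition on $\nabla^2 u$ coming from the Euler--Lagrange equation, this forces a lower bound on the Hessian square. Plugging back into the Bochner inequality and using $LP(x_0)\leq 0$ then yields $F^2(x_0,\nabla u(x_0)) \leq C(M,g,F)$ and hence the desired bound $P(x_0) \leq C(M,g,F)$. \emph{Main obstacle:} carrying out the Bochner computation in the Riemannian, position-dependent anisotropic setting, and organizing the terms so that (i) the $W$-dependent contributions cancel, leaving a constant depending only on $(M,g,F)$, and (ii) the transversal-convexity parameter $\lambda$ dominates all the error terms coming from $F_x$, $F_{xv}$, and the curvature of $M$, allowing the argument to close.
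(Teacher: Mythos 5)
Your overall strategy (a maximum-principle argument on a Modica-type quantity, treating the degenerate case $\nabla u(x_0)=0$ separately, differentiating the Euler--Lagrange equation and using transversal convexity to make the Hessian quadratic form nonnegative) is the same as the paper's. But there is a genuine gap in the concluding step, and it is tied to your choice of $P=\frac{\varepsilon}{2}F^2(\nabla u)-\frac{W(u)}{\varepsilon}$ together with the claim that the $W''(u)F^2(\nabla u)$ contributions ``cancel exactly as in the isotropic case.'' If they cancel, the inequality you are left with at a maximum point is (schematically, already in the flat isotropic model) $0\ge LP(x_0)\ge \varepsilon|\nabla^2u|^2-\varepsilon^{-3}(W')^2-C$, while the condition $\nabla P(x_0)=0$ gives $\varepsilon\,\nabla^2u\,\nabla u=\varepsilon^{-1}W'\nabla u$ and hence only the matching lower bound $\varepsilon|\nabla^2u|^2\ge\varepsilon^{-3}(W')^2$. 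These two inequalities are consistent with each other for \emph{every} value of $|\nabla u(x_0)|$; no coercive term in $F^2(\nabla u)$ or in $P$ survives, so the asserted conclusion ``$F^2(x_0,\nabla u(x_0))\le C$'' does not follow. (This is why Modica's original Euclidean proof is not a pointwise evaluation at a maximum but a Liouville-type argument for entire solutions; on a closed manifold an interior maximum always exists and one must manufacture a strictly signed zeroth-order term.)

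The paper closes exactly this gap by applying the maximum principle to $\zeta:=F(\nabla u)-\varepsilon^{-1}\sqrt{2W(u)}$ (the difference of square roots) rather than to $P$, and by \emph{not} letting the $W''$ terms cancel: using the identity $W''=(\sqrt{2W})''\sqrt{2W}+\frac{(W')^2}{2W}$ and the standing hypothesis $(\sqrt{W})''\le -c<0$ on $(-1,1)$ from \eqref{dw1}, the combination of the $\varepsilon^{-2}W''F(\nabla u)$, $\varepsilon^{-3}(W')^2/\sqrt{2W}$ and $\varepsilon^{-1}(\sqrt{2W})''F^2$ terms produces the coercive contribution $c\,\varepsilon^{-1}F(\nabla u)\,[F(\nabla u)-\varepsilon^{-1}\sqrt{2W(u)}]$, which dominates the $O(\varepsilon^{-1}F(\nabla u))$ curvature and non-autonomy errors as soon as $\zeta>\Lambda$ with $\Lambda$ a fixed constant; the bound on $P$ is then deduced a posteriori. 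A second, more technical point your sketch defers but which is essential: since there is no a priori bound on $D^2u$, the $O(D^2u)$ error terms must be absorbed into the good Hessian term, which the paper does by combining the transversal lower bound $I\ge\frac{\lambda^2}{|\nabla u|}\sum_{i\le n}\sum_{j<n}|\partial_{ij}u|^2$ with the control of $\partial_{nn}u$ coming from the Euler--Lagrange equation. If you rework your computation around $\zeta$ (or equivalently keep the $W''$ terms and extract the concavity of $\sqrt{W}$), the rest of your outline aligns with the paper's proof.
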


Compared to the isotropic counterpart, its proof is much more delicate
due to the appearance of additional terms when the Euler--Lagrange equation is differentiated, which need to be controlled by exploiting convexity and homogeneity of $F$ at various places. An additional challenge is
the lack of effective bounds on the Hessian $D^2u$ away from $\{\nabla u=0\}$, as globally $u$ is guaranteed to belong only to $C^{1,\alpha}$ for some $\alpha\in(0,1)$; terms involving the Hessian naturally appear in the setting of inherently non-autonomous integrands $F$ on closed manifolds.

Theorem~\ref{thminf:modicabound} is somewhat surprising: in the isotropic Allen--Cahn case, the validity of~\eqref{eq:modicabound} implies the monotonicity formula \cite{Ilmanen,HutchinsonTonegawa2000}, which is known to fail in the anisotropic setting~\cite{characterizationareaallard}. Hence, while~\eqref{eq:modicabound} remains valid, it no longer entails monotonicity.
Nonetheless, Theorem~\ref{thminf:modicabound} serves as a key ingredient in proving our main result.

\begin{theorem}\label{thminf:min-max}
    There exists a nontrivial $F$-stationary integral $(n-1)$-varifold $V$ in $M$, whose weight $c_W\|V\|$ arises as the limit, as $\varepsilon \to 0$, of the energy densities \[
e_\varepsilon(u_\varepsilon)
:=
\frac{\varepsilon}{2} F(\nabla u_\varepsilon)^2
+\frac{1}{\varepsilon} W(u_\varepsilon)
\]
of suitable critical points $u_\varepsilon$ of  $E_\e$, constructed via a min-max procedure. 
\end{theorem}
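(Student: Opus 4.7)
The plan is to adapt Guaraco's mountain-pass construction~\cite{Guaraco2018} to the anisotropic functional $E_\epsilon$. Since $F(x,\cdot)^2$ generically fails to be $C^2$ at the origin, I would first replace it with a $C^2$ approximant $\widetilde F_\epsilon$, obtained by mollifying $F^2$ on a small ball $\{|p|\le \delta_\epsilon\}$, thus producing a $C^2$ functional $\widetilde E_\epsilon$ on $W^{1,2}(M)$. The constants $u\equiv \pm 1$ are strict local minima of $\widetilde E_\epsilon$ with energy $0$, and the Palais--Smale condition at positive levels is standard. A mountain-pass/deformation argument of Ghoussoub type then yields, for each $\epsilon>0$, a critical point $u_\epsilon$ with $\widetilde E_\epsilon(u_\epsilon)=c_\epsilon\in[c_0,c_1]$ uniformly in $\epsilon$ and Morse index $\le 1$. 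The upper bound $c_1$ comes from evaluating $\widetilde E_\epsilon$ on the one-parameter diffuse-interface sweepout associated with a fixed family of smooth hypersurfaces foliating $M$, together with the $\Gamma$-convergence $E_\epsilon \stackrel{\Gamma}{\to} c_W\cdot \mathcal F$ of Theorem~\ref{thm:GammaConv}; the strict positivity $c_0>0$ follows from standard energy concentration around the constant states.

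The second step is to transfer $u_\epsilon$ to a genuine critical point of the original $E_\epsilon$. The maximum principle gives $|u_\epsilon|\le 1$, and interior elliptic regularity combined with Theorem~\ref{thminf:modicabound} yields the pointwise estimate $\epsilon\, F(\nabla u_\epsilon)\le C$. Choosing $\delta_\epsilon$ small enough so that $\widetilde F_\epsilon$ coincides with $F$ above the scale of the actual gradients of $u_\epsilon$ (or via a diagonal procedure), one concludes that $u_\epsilon$ also solves the Euler--Lagrange equation for the unperturbed $E_\epsilon$.

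Next I would pass to the limit on the diffuse varifolds
\[
V_\epsilon(\phi):=\frac{1}{c_W}\int_M \phi\!\left(x,(\nabla u_\epsilon(x))^\perp\right)\, e_\epsilon(u_\epsilon)(x)\,d\mathrm{vol}_g(x),\qquad \phi\in C_c(G_{n-1}(TM)).
\]
The uniform energy bound furnishes uniform mass bounds on $V_\epsilon$, hence a weak-$*$ subsequential limit $V$. The central point is to show that $V$ is integer rectifiable and $F$-stationary. In the isotropic setting of Hutchinson--Tonegawa, rectifiability rests on the monotonicity formula derived from Modica's bound; here monotonicity fails, and I would compensate by combining the anisotropic Modica estimate~\eqref{eq:modicabound} with the stability inequality (or bounded Morse index) of $u_\epsilon$. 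Concretely, stability furnishes integrated curvature and Caccioppoli-type estimates on the level sets of $u_\epsilon$ away from a set of small index, while~\eqref{eq:modicabound} controls the equipartition defect $\xi_\epsilon:=\epsilon F(\nabla u_\epsilon)^2/2-W(u_\epsilon)/\epsilon$; together they should allow one to reproduce the Hutchinson--Tonegawa blow-up analysis at $\|V\|$-typical points and obtain rectifiability with integer multiplicity. The $F$-stationarity of $V$ is then a direct computation: testing the Euler--Lagrange equation against the variation $X\cdot\nabla u_\epsilon$ induced by a vector field $X\in\Gamma(TM)$ and integrating by parts yields, up to error terms controlled by $\xi_\epsilon\to 0$, precisely the anisotropic first variation $\delta_F V_\epsilon(X)$.

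The main obstacle is clearly the rectifiability/integrality step, since the failure of the monotonicity formula in the anisotropic regime prevents a direct transcription of the Hutchinson--Tonegawa argument; most of the technical effort should consist in using~\eqref{eq:modicabound} and the stability/Morse-index bounds as a workable substitute. Nontriviality of $V$ finally follows from the strict lower bound $c_0>0$ on the min-max value, which guarantees $\|V\|(M)>0$.
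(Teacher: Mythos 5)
Your overall skeleton (regularize $F^2$, run a mountain-pass with Morse index $\le 1$, pass $\delta\to0$ via uniform $C^{1,\alpha}$ estimates, form diffuse varifolds, and use the Modica bound plus stability as a substitute for the missing monotonicity formula) is the same as the paper's, and you correctly identify where the difficulty lies. But the step you describe as ``reproduce the Hutchinson--Tonegawa blow-up analysis at $\|V\|$-typical points'' is precisely the step that cannot be carried out: the HT blow-up is driven by the almost-monotonicity of $r^{1-n}\mu(B_r)$, which is exactly what fails here even though \eqref{eq:modicabound} holds. The paper replaces it by a two-track argument. First, rectifiability of the weight is obtained for the \emph{isotropic} diffuse varifold $\tilde V_\e$ weighted by $\sqrt{2W(u_\e)}\,|\nabla u_\e|$: the stability inequality (Theorem~\ref{stab.ineq}) bounds $\int\varphi^2|\mathrm{II}_{u}|^2\,\e F_\delta(\nabla u)^2$, hence the isotropic first variation $|\delta\tilde V_0|$ is locally bounded away from finitely many points (Corollary~\ref{firstvar.bd.cor}); combined with the clearing-out/lower-density bounds of Lemmas~\ref{jac.lb}--\ref{f.lb} and Simon's density-ratio lemma, Allard's rectifiability criterion applies. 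Second, the $F$-stationary varifold is \emph{not} simply your $V_\e$-limit read off naively: one passes to the stress-energy tensor $T_\e$, shows its limit is $B\,d\mu$ with $B(x)$ a priori only in the convex hull of $\{C_F(\nu)\}$, and then a tangent-measure argument using the strict convexity of $F$ (the atomic condition of \cite{de2018rectifiability}) forces $B(x)=C_F(\nu_x)$ for a single direction. Without this step you do not know that the Young measure over normals collapses to a Dirac mass, and hence you cannot even write down the candidate $F$-stationary varifold, let alone verify $\delta_FV=0$.

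Two further concrete problems. (i) Your stationarity computation invokes ``error terms controlled by $\xi_\epsilon\to0$.'' The Modica bound gives only the one-sided pointwise estimate $\xi_\e\le C$; the integrated vanishing $\int(\xi_\e)^+\to0$ is deduced in the paper only \emph{after} rectifiability of $\mu$, from the mutual singularity of $\mu$ and $\mathrm{vol}_g$ (Remark~\ref{rmk:xizero}), so using it as an input to the rectifiability/stationarity step is circular. (ii) Integrality is not addressed: the paper proves it by a separate slicing argument (Allard's strong constancy lemma to select good slices, smallness of the tangential gradient from the convergence $T_\e\to \theta\,C_{F_{x_0}}(e_n)\mathcal H^{n-1}\res P$, and an ODE classification of almost-one-dimensional profiles, Lemma~\ref{one-dim} and Theorem~\ref{thm:int}); none of this is supplied by ``blow-up analysis.'' A minor point: a mollification of a non-quadratic convex $F^2$ does not literally coincide with $F^2$ outside the mollification ball, so the clean ``$\widetilde F_\e=F$ above the gradient scale'' transfer is unavailable; the correct route is the diagonal limit $\delta\to0$ with $\delta$-uniform $C^{1,\alpha}$ bounds (Proposition~\ref{regularity}), which you mention only parenthetically.
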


\begin{remark}
    While we exhibit such $V$ from the simplest mountain-pass construction, the analysis contained
    in the present paper applies to any min-max scheme. We conjecture that, in fact, the energy density of \emph{any} sequence $(u_\e)$ of critical points with bounded energy concentrates along an integral $F$-stationary varifold as $\e\to0$, up to a subsequence, regardless of stability conditions.
\end{remark}

While we do not attempt to develop any regularity theory beyond integrality of $V$,
we hope to do so in a future work, exploiting once again the stability of $u_\e$.
It would also be interesting to investigate analogues in codimension two or higher,
e.g.\ by devising appropriate anisotropic versions of \cite{PigSte}.

Below, we briefly discuss the proof strategy of Theorem~\ref{thminf:min-max}, which is obtained combining Proposition \ref{prop:bounds} with Theorem~\ref{varifold.constr} and Theorem~\ref{thm:int}.

First of all, we  construct a  family of min-max solutions $(u_\e)$ satisfying uniform (in $\e$) lower and upper bounds of $E_\e(u_\e)$ and with Morse index $\le 1$ (up to a regularizing approximation of $F$); see Proposition \ref{prop:bounds}. Hence, the energy densities
of $u_\e$ will subconverge in the sense of measures to a finite Radon
measure $\mu$ on $M$.

To encode the geometry of the level sets of $u_\varepsilon$, we introduce the
$(n-1)$-varifolds $\tilde V_\varepsilon$  in \eqref{def:average}, which heuristically are weighted averages of the level sets of $u_\e$.
The weight measure $\|\tilde V_\varepsilon\|(M)$ can be bounded above (up to a constant)  with $E(u_\e)$, and hence we can extract a subsequential limit $\tilde V_0$ (as $\e \to 0$) with $\|\tilde V_0\|
\leq C\mu$. Moreover, using the Morse index bound of (the approximations of) $u_\e$, we are also able to deduce a local bound on the isotropic first variation of $\tilde V_0$ away from a finite set $\mathcal{S}$: see Corollary \ref{firstvar.bd.cor}. To this aim, we derive a diffuse version of the stability inequality for anisotropic minimal hypersurfaces, stated in Theorem~\ref{stab.ineq}. This result parallels those obtained in the isotropic case by Padilla and Tonegawa~\cite{PadillaTonegawa1998} and Tonegawa~\cite{Tonegawa2005}, although in our setting the PDE degenerates at points where $\nabla u_\e = 0$, as $F^2$ is not $C^2$ at the zero section (unless $F$ is a quadratic norm). To handle this, we perform our estimates first for suitable smooth approximations of $F^2$ (the same ones used to regularize $E_\epsilon$) and then pass to the limit.

By means of the Modica-type estimate \eqref{eq:modicabound} in Theorem \ref{thminf:modicabound} and
another application of stability, we can also control the weight $\|\tilde V_0\|$ from below with $\mu$ on $M\setminus \mathcal{S}$ and, more importantly, we can prove that $\tilde V_0$ and $\mu$ are rectifiable: see Theorem \ref{rect}. A crucial ingredient in the proof of Theorem  \ref{rect} is to show that the upper density $\Theta^{n-1,*}(\|\tilde V_0\|,x)>0$ for $\|\tilde V_0\|$-a.e.\ $x$.
This, combined with the local bound on the isotropic first variation of $\tilde V_0$, provides the rectifiability of $\tilde V_0$ and hence of $\mu$ on $M\setminus \mathcal{S}$, by Allard's rectifiability theorem \cite[Section 5]{allard1972first}.

Our next step involves extracting an $F$-stationary varifold $V$ and, as a byproduct of the analysis,
showing that $\mu$ is a rectifiable measure on all $M$, as stated in Theorem \ref{varifold.constr}. This is achieved by analyzing the stress-energy tensor
$T_\varepsilon$  of $u_\varepsilon$ defined in \eqref{strtens}, and proving its convergence as $\e \to 0$ to a tensor-valued measure
$T_0$ whose divergence is controlled. A careful analysis of the directions of invariance of the tangent measures of $T_0$ provides the desired rectifiability of $\mu$, with an argument in the spirit of \cite{de2018rectifiability}.
Also, writing $d\mu=\theta\,d(\mathcal{H}^{n-1}\res\Sigma)$, where $\Sigma\subset M$ is a rectifiable Borel set  with $\sigma$-finite $\mathcal{H}^{n-1}$ measure and $\theta:\Sigma\to(0,\infty)$, we show that the $(n-1)$-dimensional varifold
    $$dV(x,\nu):=\frac{\theta(x)}{F(x,\nu_x)} \delta_{\nu_x}(\nu)\otimes d(\mathcal{H}^{n-1}\res\Sigma)(x)$$
    is rectifiable and $F$-stationary, where $\nu_x\perp T_x\Sigma$ is the unit normal.

    Our candidate  varifold claimed in Theorem \ref{thminf:min-max} is $c_W^{-1}V$: we are just left to prove that $\frac{\theta(x)}{c_W F(x,\nu_x)}$
is an integer for $\mathcal{H}^{n-1}$-a.e.\ $x\in\Sigma$.
This is done by a one-dimensional slicing argument in the normal directions to $\Sigma$. Fix
a generic point $x_0\in\Sigma$ and assume
$\nu_x = e_n$ in a chart. On small cylinders of the form
$B^{n-1}_\epsilon(x_0)\times(-r,r)$, one rescales the functions $u_\varepsilon$ to obtain
limit profiles defined on Euclidean cylinders with almost flat metric and
almost autonomous integrand $F_{x_0}$.
Using the convergence of stress-energy tensors, we show in \eqref{par.to.zero} the smallness of the tangential gradient
  $\partial_{x_i}u_\varepsilon$ for $i<n$ in an $L^2$ sense, that is, the fact that $u_\e$ is almost one-dimensional.
  Such smallness, combined with the Modica-type inequality \eqref{eq:modicabound}, is then proved to imply 
 that $E_\e(u_\e)$ is close to ($F(x_0,e_n)$ times) a multiple of the energy $c_W=\int_{-1}^1\sqrt{2W}$ of the heteroclinical solution, unless $u_\e$ is essentially constant: see Lemma \ref{one-dim}. From this, we deduce that along almost every normal line  the energy
of $u_\varepsilon$ on that line converges to a multiple of
$c_W F(x_0,e_n)$.
This in turn implies that $\frac{\theta(x_0)}{c_W F(x,\nu_{x_0})}$ is an integer, as asserted in Theorem~\ref{thm:int}.

\begin{ack}
    Antonio De Rosa was funded by %the European Union: 
    the European Research Council (ERC), through StG ``ANGEVA,'' project number: 101076411. Alessandro Pigati was funded by %the European Union:
    the European Research Council (ERC), through StG ``MAGNETIC,'' project number: 101165368.
Views and opinions expressed are however those of the authors only and do not necessarily reflect those of the European Union or the European Research Council. Neither the European Union nor the granting authority can be held responsible for them.

\end{ack}

\section{Preliminaries}\label{sec:preliminaries}

\subsection{Terminology}
Throughout the paper, we fix a smooth closed (i.e., compact and without boundary) \(n\)-dimensional Riemannian manifold $(M^n, g)$ with \(n\ge 2\).  We note that the fixed background Riemannian metric plays essentially no role in what follows, since it can be absorbed into the integrand. It is however useful in order to fix a background volume form and to identify \(n\)-dimensional planes with their normals.

We assume the reader to be familiar with the standard notions in geometric measure theory~\cite{Simon}, and we adopt the following notation and conventions.
\begin{itemize}
\item $UT(M)$:  the unit tangent bundle of $M$, namely,
  \[
    UT(M) := \{(x, v) \in TM: \|v\|_{g_x} = 1\}.
  \]
\item $G_{n-1}(M)$:  the unoriented hyperplane bundle of $M$, namely,
  \[
    G_{n-1}(M) := \{(x, T): x \in M,\ T \text{ is an $(n-1)$-dimensional linear subspace of } T_x M\}.
  \]
 By means of the background metric, we can identify \(G_{n-1}(M)\) with \( UT(M)/\sim\), where we have set the equivalence relation \((x,v) \sim (x, -v)\).
\item $\partial^* E$: the reduced boundary of a Caccioppoli set $E$.
\item $\mathcal{V}(M) = \mathcal{V}_{n-1}(M^n)$: the space of $(n-1)$-varifolds on $M$, namely, the space of nonnegative Radon measure on $G_{n-1}(M)$.
\item $\llbracket K \rrbracket$: %=\mathcal{H}^{n-1}\res K \otimes \delta_{T_xK}$:
the integral varifold of multiplicity one associated with an $(n-1)$-rectifiable set $K$ with $\mathcal{H}^{n-1}(K)<\infty$.
\item $\|V\|$: the weight of $V \in \mathcal{V}(M)$, i.e., the Radon measure on $M$ associated with $V$.
\end{itemize}

\subsection{Anisotropic energies}

In the sequel, an \emph{anisotropic integrand} will be a smooth function $F: G_{n-1}(M) \to (0,\infty)$. Note that, according to the identification above, we can consider it as a function \(F: UT(M)\to (0,\infty)\), which is an even function in the second variable. When no confusion arises, we will often switch between these viewpoints without further comment. Also, %since we can absorb $g_x$ in $F(x,\cdot)$,
for the fixed background metric $g$, we will just use the symbol $|v|$ for $|v|_{g_x}=\sqrt{g_x(v,v)}$ and $\mathcal{H}^{n-1}$ for $\mathcal{H}^{n-1}_g$. It will also be useful to extend \(F\) $1$-homogeneously in the second variable as
\[
F(x, v)=|v|F\left(x, \frac{v}{|v|}\right),\quad F(x,0)=0.
\]
We will always assume this extension has been made whenever we consider derivatives of \(F\).

We assume that
$F$ is uniformly convex in directions orthogonal to the radial direction, namely
$$D^2F_x(v)[w,w]\ge \lambda|w|^2\quad\text{for }v\in UT_x(M)\text{ and }w\in T_xM,\ w\perp v$$
at all points $x\in M$, where $F_x$ denotes the restriction of $F$ to $T_xM$.
Note that, by $1$-homogeneity of $F$, the uniform convexity assumption gives
\begin{equation}\label{F.cvx}
D^2F_x(v)[w,w]\ge \lambda\frac{|w|^2}{|v|}\quad\text{for }v\in T_xM\setminus\{0\}\text{ and }w\in T_xM,\ w\perp v
\end{equation}
and the $2$-homogeneous function $F^2$ is automatically uniformly convex, namely
\begin{equation}\label{F.square.cvx}
D^2(F^2_x)\ge2\lambda g_x\quad\text{on }T_xM\setminus\{0\}
\end{equation}
in the sense of quadratic forms at each point $x\in M$, up to possibly decreasing $\lambda>0$.
Possibly further decreasing $\lambda \in (0,1)$, we can also assume that
\begin{equation}\label{bound}
    \lambda|v|\leq F_x(v)\leq \frac{|v|}{\lambda} \quad\text{for all }v\in T_xM.
\end{equation}

 For a finite perimeter set $E$, we denote the \emph{anisotropic perimeter} of $E$ by
$$\mathcal{F}(E):=\int_{\partial^*E} F(x,\nu_x)\,d\mathcal{H}^{n-1}(x),$$
where here $\nu_x$ denotes the exterior measure-theoretic normal of $E$. Actually, the assumption that $F$ is even implies that the sign of the unit normal $\nu_x$ is irrelevant.

For a varifold $V \in \mathcal{V}(M)$, we define its \emph{$F$-anisotropic energy}, and respectively its localized energy to a Borel subset \(U\subset M\), as
\[
  \FF(V) :=  \int_{G_{n-1} (M)} F(x, T) \,dV(x, T), \quad \mbox{and respectively} \quad \FF(V;U) :=  \int_{G_{n-1} (U)} F(x, T) \, dV(x, T),
\]
where \(G_{n-1} (U)\) is the restriction of the Grassmannian bundle to \(U\).
Note that, by the identification of the \((n-1)\)-dimensional Grassmanian with the unit sphere, we can equivalently think an \((n-1)\)-dimensional varifold as a measure on \(UT(M)\) invariant under reflection $(x,v)\mapsto(x,-v)$. In that case we will write
\[
  \FF(V) =  \int_{UT (M)} F(x, \nu) \, dV(x, \nu).
\]

The \emph{first variation} of the $F$-anisotropic energy is defined as
\[
  \delta_F V(X) := \frac{d}{dt}\Bigl |_{t = 0} \FF\left((\varphi_t)_\# V\right)
\]
where $V \in \mathcal{V}(M)$, $X\in C^1(M,TM)$, and $\varphi_t$ is the flow of $X$ (i.e., $\frac{d\varphi_t}{dt} = X(\varphi_t)$ and $\varphi_0 = \operatorname{id}_M$).  Referring to~\cite{de2018rectifiability} for the general expression, here we record that when \(M=\mathbb R^{n+1}\) and \(F(x,\nu)\) is autonomous, i.e., independent of the spatial variable $x$ (so that we can view it as an even function $\R^{n+1}\to\R$), we have the following formula:
\begin{align}
\label{e:firstvariation}
\begin{aligned}
 \delta_{F} V(X)&=\int_{UT (M)} [F (\nu)\operatorname{div} X-\langle DF(\nu), DX^\top \nu \rangle]\, dV(x,\nu)\\
 &=\int_{UT (M)} \langle F (\nu)I-\nu\otimes DF(\nu),DX \rangle\, dV(x,\nu),
\end{aligned}
\end{align}
where \(DX^\top \) is the transpose of \(DX\). For the general formula of $\delta_{F} V(X)$, one needs to add a term which depends on \(D_xF\) (see \cite{de2018rectifiability}). 
Back to the case of closed $M$, a varifold $V \in \mathcal{V}(M)$ is said to have \emph{bounded $F$-anisotropic first variation} if there exists $C > 0$ such that for all $X\in C^1(M,TM)$
\[
  |\delta_{F} V(X)| \leq C \|X\|_{L^\infty}.
\]
Equivalently, an $(n-1)$-varifold $V \in \mathcal{V}(M)$ has bounded $F$-anisotropic first variation if \(\delta_F V\) is a ($TM$-valued) Radon measure. We will say that  $V$ is \emph{$F$-stationary} if $\delta_F V \equiv 0$. In the isotropic setting $F_x(v)=|v|$, we will simply write $\delta V$ to denote the first variation of a varifold $V$.

\subsection{Anisotropic Allen--Cahn}
We now generalize the Allen--Cahn energy to the anisotropic setting by letting
$$E_\varepsilon(u) := \int_M \left[ \varepsilon \frac{F(x,\nabla u(x))^2}{2} + \frac{W(u(x))}{\varepsilon} \right] d\operatorname{vol}_g(x),$$
where $W$ is a fixed double-well potential vanishing at $\pm1$;
we assume that $W:\R\to[0,\infty)$ is smooth with
\begin{equation}\label{dw1}
W>0\text{ on }\R\setminus\{\pm1\},\quad W(\pm1)=0,\quad W''(\pm1)>0,
\quad (\sqrt{W})''\le-c<0\text{ on }(-1,1),
\end{equation}
as well as
\begin{equation}\label{dw2}
-Cs\le W'(s)<0\text{ on }(-\infty,-1),\quad 0<W'(s)\le Cs\text{ on }(1,\infty),
\quad cs^2\le W(s)\le Cs^2\text{ on }\R\setminus[-2,2],
\end{equation}
for two constants $c,C>0$.
Two standard choices are $W(s)=\frac{(1-s^2)^2}{4}$ or $W(s)=1+\cos(\pi s)$, suitably modified outside of the interval $[-1,1]$. The requirements \eqref{dw2} on $\R\setminus[-1,1]$ are actually
irrelevant in the construction of critical points $u$, since we can always modify $W$ on this set
in order to satisfy them, and the latter will imply that $|u|\le1$.
% We also notice that by \eqref{dw2}, and more specifically by the fact that $W'(s)<0$ on $(-\infty,-1)$ and $0<W'(s)$ on $(1,\infty)$, any critical point $u_\e$ of $E_\e$ satisfies $|u_\e|\le 1$, as a simple application of maximum principle.

We will denote by 
\begin{equation}\label{eq:endens}
    e_{\e}(u(x)):=\varepsilon \frac{F(x,\nabla u(x))^2}{2} + \frac{W(u(x))}{\varepsilon}
\end{equation}
the energy density of $u:M\to \R$ with respect to $E_{\e}$.

We observe that, although $F^2$ is smooth away from the zero section of $TM$, it is in general only $C^1$ on $TM$. Indeed, if $F_x^2$ is $C^2$ at $0$, then by Taylor expansion there exists a symmetric matrix $Q$ such that
\[
F_x^2(v) = \langle Qv,  v \rangle + o(|v|^2).
\]
From the $2$-homogeneity of $F^2_x$ we obtain
\[
F_x^2(v) =\lim_{t \to 0} \frac{F_x^2(tv)}{t^2} = \langle Qv,  v \rangle.
\]
We deduce that $F^2_x$ is $C^2$ if and only if $F_x$ is the Euclidean norm up to linear changes of coordinates. Having a $C^1$ functional will be enough for first-order considerations and in particular to check the Palais--Smale condition. However, to give precise meaning to stability (or to Morse index bounds) of critical points, it will be useful to consider smooth approximations $F_\delta$ converging smoothly  to $F$ away from the zero section of $TM$, as $\delta \to 0$; see also Remark \ref{rem:smoothing} below.

To this aim, for each $\delta\in(0,1)$, we define for every $x\in M$
$$G_\delta(x,v):=
(F_x^2 * \eta_\delta)(v)-(F_x^2 * \eta_\delta)(0)\quad \mbox{and} \quad F_\delta(x,v):=\sqrt{G_\delta(x,v)}, $$
where $\eta_\delta$ is the standard radial mollifier supported in the ball $B_\delta(0)\subset T_xM\cong\R^n$
(using the metric $g$ for the latter identification). Note that $G_\delta(x,v)\ge0$
as it is convex and even in $v$, which forces $v=0$ to be the minimum point of $G_\delta(x,\cdot)$.
Hence,  $F_\delta^2$ is smooth, $F_\delta^2\to F^2$ in $C^1_{loc}(TM)$ as $\delta \to 0$,
and for all  $\delta\in(0,1)$
\begin{equation}\label{eq:l'}
    \mbox{$F_\delta$ satisfies \eqref{F.square.cvx}--\eqref{bound}, with a smaller $\lambda'>0$ uniform in $\delta$ in place of $\lambda$}.
\end{equation}
In fact, the validity of \eqref{bound} for $F_\delta$, namely $(\lambda'|v|)^2\le G_\delta(x,v)\le(|v|/\lambda')^2$, is immediate to check for fixed $\delta$
(if $|v|\ge 2$ it follows from \eqref{bound} for $F$, while near the origin it follows from
\eqref{F.square.cvx} and smoothness of $G_\delta(x,\cdot)$), and its uniformity follows from the fact that
$G_\delta(x,v)=\delta^2 G_1(x,v/\delta)$.
As an immediate consequence of \eqref{F.square.cvx}, the differential of $(F_\delta)_x^2$ is monotone, i.e.,
\begin{equation}\label{mono}
\langle D(F_\delta)_x^2(v)-D(F_\delta)_x^2(w),v-w\rangle \ge 2\lambda'|v-w|^2.
\end{equation}

We define the anisotropic Allen-Cahn energy associated with $F_\delta$ as follows:
$$E_{\epsilon,\delta}(u):=\int_M\left[\e \frac{F_\delta(x,\nabla u(x))^2}{2}+\frac{W(u(x))}{\e}\right]d\operatorname{vol}_g.$$
As in \eqref{eq:endens}, we denote by
$$e_{\e,\delta}(u):=\e \frac{F_\delta(x,\nabla u(x))^2}{2}+\frac{W(u(x))}{\e}$$ the energy density of $u:M\to \R$ with respect to $E_{\e,\delta}$.

\section{Existence of nontrivial solutions via min-max}\label{sec:Prel}
The main purpose of this section is to check the Palais--Smale condition for the functional $E_{\epsilon,\delta}$ and to deduce the existence of nontrivial critical points for $E_\e$.

\begin{proposition}\label{energyproperties}
The functional $E_{\epsilon,\delta}$ is finite and of class $C^2$ on $H^1(M)$, with
\begin{subequations}
	\begin{align*}
	DE_{\e,\delta}(u)[v] &=\int_M \left[\frac\e{2} D(F_\delta)_x^2(\nabla u)[\nabla v] + \frac1\e W'(u) v\right],  \\
	D^2E_{\e,\delta}(u)[v,w] &=\int_M \left[\frac\e{2} D^2(F_\delta)_x^2(\nabla u)[\nabla v,\nabla w] + \frac1\e W''(u) vw\right].
	\end{align*}
\end{subequations}
Moreover, it satisfies the Palais--Smale condition: % for bounded sequences, i.e.
if $u_k\in H^1(M)$ is a sequence such that $E_{\e,\delta}(u_k)$ is bounded uniformly in $k$ and $DE_{\e,\delta}(u_k)\to0$ strongly in $H^1(M)$ (note that $DE_{\e,\delta}(u_k)\in H^1(M)^*\cong H^1(M)$), then $(u_k)$ admits a strongly converging subsequence.
\end{proposition}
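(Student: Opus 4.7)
The plan is to handle the three separate claims—finiteness, $C^2$ regularity with the explicit derivative formulas, and the Palais--Smale condition—in that order. For finiteness, I would use \eqref{eq:l'} to bound $F_\delta(x,v)^2 \le |v|^2/(\lambda')^2$ and \eqref{dw2} to bound $W(s) \le C(1+s^2)$, so that $E_{\e,\delta}$ is controlled by the squared $H^1$-norm plus a constant on the closed manifold $M$. For the $C^2$ structure and the explicit formulas for $DE_{\e,\delta}$ and $D^2E_{\e,\delta}$, I would differentiate under the integral sign: since $F_\delta^2$ is smooth on the whole tangent bundle (not merely away from the zero section) and $2$-homogeneous outside a neighborhood of the origin, its first differential $D(F_\delta^2)_x(v)$ grows linearly in $|v|$ and its second differential $D^2(F_\delta^2)_x(v)$ is bounded; together with the polynomial bounds on $W'$ and $W''$ coming from \eqref{dw2}, these give integrable majorants against test pairs $v,w\in H^1$. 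Continuity of the two differentials as maps into the appropriate dual spaces then follows from dominated convergence along a.e.-convergent subsequences.

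For the Palais--Smale condition, the plan is first to derive a uniform $H^1$ bound on $u_k$ and then to upgrade weak subsequential convergence to strong. The $H^1$ bound is immediate: $\frac{\e(\lambda')^2}{2}\int|\nabla u_k|^2 \le E_{\e,\delta}(u_k)$ by \eqref{eq:l'}, while $\frac{c}{\e}\int_{\{|u_k|\ge 2\}} u_k^2 \le E_{\e,\delta}(u_k)$ by \eqref{dw2} (combined with the trivial $L^2$ bound on $\{|u_k|\le 2\}$) controls $\|u_k\|_{L^2}$. Passing to a subsequence, $u_k\rightharpoonup u$ in $H^1(M)$, and $u_k\to u$ in $L^2(M)$ and a.e., by Rellich--Kondrachov.

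The delicate step, and what I expect to be the main obstacle, is promoting this to strong $H^1$ convergence: the kinetic part of $E_{\e,\delta}$ is not a quadratic form on $H^1$, so the Hilbert-space parallelogram argument available in the isotropic case does not apply directly. The plan is to test $DE_{\e,\delta}(u_k)$, which tends to zero in $H^1(M)^*$, against the $H^1$-bounded increment $u_k-u$, obtaining $DE_{\e,\delta}(u_k)[u_k-u]\to 0$. The potential piece $\frac{1}{\e}\int W'(u_k)(u_k-u)$ vanishes in the limit by strong $L^2$ convergence of $u_k$ together with the linear growth of $W'$ in \eqref{dw2}. One is then left with $\int_M D(F_\delta)_x^2(\nabla u_k)[\nabla(u_k-u)]\to 0$; and since $D(F_\delta)_x^2(\nabla u)\in L^2$ and $\nabla u_k\rightharpoonup\nabla u$ weakly, the analogous expression with $\nabla u$ in the first slot also tends to zero. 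Subtracting and invoking the monotonicity bound \eqref{mono} yields
\[
2\lambda'\int_M|\nabla u_k-\nabla u|^2 \le \int_M\langle D(F_\delta)_x^2(\nabla u_k)-D(F_\delta)_x^2(\nabla u),\,\nabla u_k-\nabla u\rangle \to 0,
\]
hence $\nabla u_k\to\nabla u$ in $L^2$ and thus $u_k\to u$ strongly in $H^1$. The uniform convexity of $F_\delta^2$ recorded in \eqref{mono} is precisely what replaces the parallelogram law in this non-quadratic setting, which is why the smoothed integrand $F_\delta$—rather than $F$ itself—is used.
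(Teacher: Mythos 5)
Your proposal is correct and follows essentially the same route as the paper: coercivity from \eqref{eq:l'} and \eqref{dw2} gives the $H^1$ bound, Rellich--Kondrachov gives weak $H^1$ and strong $L^2$ subconvergence, and the key step of testing $DE_{\e,\delta}(u_k)-DE_{\e,\delta}(u)$ against $u_k-u$ and invoking the monotonicity \eqref{mono} to get strong gradient convergence is exactly the paper's argument. The only cosmetic slip is describing $F_\delta^2$ as ``$2$-homogeneous outside a neighborhood of the origin'' (it is not exactly homogeneous anywhere, being a mollification of $F^2$), but the linear/bounded growth of its first and second differentials that you actually use does hold.
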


\begin{proof}
The finiteness of $E_{\epsilon,\delta}$ follows from the quadratic growth of $W$, while the $C^2$
regularity and the formulas for its derivatives are standard  calculations.
Since by \eqref{dw2} and \eqref{eq:l'}
$$(F_\delta)_x^2(\nabla u)\ge \lambda'|\nabla u|^2,\quad W(u)\ge c|u|^2-C,$$
given a sequence $(u_k)$ as in the statement, we immediately deduce that it is bounded in $H^1(M)$; by the Rellich--Kondrachov compactness theorem, there exists $u\in H^1(M)$ such that $u_k$ converges to $u$, weakly in $H^1(M)$ and strongly in $L^2(M)$, along a subsequence. 
We observe that
\begin{equation}\label{eq:conv1}
\begin{split}
    \lim_{k\to\infty} DE_{\e,\delta}(u)[u_k-u] &=  \lim_{k\to\infty} \int_M \left[\frac\e{2} D(F_\delta)_x^2(\nabla u)[\nabla u_k-\nabla u] + \frac1\e W'(u) (u_k-u)\right]=0,
\end{split}
\end{equation}
where for the first term on the right-hand side we use that $D(F_\delta)_x^2(\nabla u)\in L^2$
(as $|D(F_\delta)_x^2(v)|\le C|v|$) and $\nabla u_k-\nabla u\rightharpoonup 0$ in $L^2$, while for the second term we use that $|W'(u)|\le C|u|+C\in L^2$ by \eqref{dw2} and $u_k\to u$ strongly in $L^2$.

Since $DE_{\e,\delta}(u_k)\to0$ strongly in $H^1$ and $u_k$ is bounded in $H^1$ uniformly in $k$, we deduce that
$$\lim_{k\to\infty}DE_{\e,\delta}(u_k)[u_k-u]=0$$
and hence
\begin{equation}\label{utile}
    DE_{\e,\delta}(u_k)[u_k-u] - DE_{\e,\delta}(u)[u_k-u]\to0.
\end{equation}
Arguing again as for \eqref{eq:conv1}, we have $W'(u_k) (u_k-u)\to0$ in $L^1$, hence we deduce that
$$\int_M [D(F_\delta)_x^2(\nabla u_k)[\nabla u_k-\nabla u] - D(F_\delta)_x^2(\nabla u)[\nabla u_k-\nabla u]]\to0.$$
Using \eqref{mono}, this implies that
$$\int_M |\nabla u_k-\nabla u|^2 \to 0,$$
and hence $u_k\to u$ strongly in $H^1$,
as desired.
\end{proof}

\begin{remark}\label{rem:smoothing}
The original functional $E_\e$ is of class $C^1$ and the previous proof still
applies to show that $E_\e$ satisfies the Palais--Smale condition.
On the other hand, $E_\e$ cannot be used to give an immediate meaning to stability or to a bound on the Morse index
of critical points.
\end{remark}

\begin{proposition}\label{regularity}
	Any critical point $u$ of $E_{\e,\delta}$ is smooth and satisfies $|u|\le1$.
	Also, there exists $\alpha\in(0,1)$ depending only on $F$ such that, for every family of critical points $(u_{\e,\delta})_{\delta\in(0,1)}$ for the perturbed functionals $E_{\e,\delta}$ with $\sup_\delta E_{\e,\delta}(u_{\e,\delta})<\infty$, along a subsequence $u_{\e,\delta}$ converges in $C^{1,\alpha}(M)$ as $\delta \to 0$ to 
	a critical point
	$$u_\e=\lim_{\delta\to0}u_{\e,\delta} \in C^{1,\alpha}(M)$$
	for $E_\e$, with $E_\e(u_\e)=\lim_{\delta\to0}E_{\e,\delta}(u_{\e,\delta})$.
	Moreover, $u_\e$ is smooth on the open set $\{|\nabla u_\e|\neq0\}$.
\end{proposition}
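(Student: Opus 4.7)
The plan is to derive everything from standard quasilinear elliptic theory applied to the Euler--Lagrange equation of $E_{\e,\delta}$, namely
\[
-\tfrac{\e}{2}\,\Div\bigl(D_v G_\delta(x,\nabla u)\bigr)+\tfrac{1}{\e}W'(u)=0,
\]
which by \eqref{F.square.cvx} and \eqref{eq:l'} is uniformly elliptic in divergence form with smooth coefficients, and whose ellipticity ratio depends only on $\lambda'$, hence on $F$ alone.

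\emph{Smoothness and the bound $|u|\le 1$ for $E_{\e,\delta}$.} Starting from $u\in H^1(M)$, I would first obtain H\"{o}lder continuity by De~Giorgi--Nash--Moser, then $C^{1,\alpha}$ regularity via the quasilinear theory of DiBenedetto--Tolksdorf--Lieberman, and finally $C^\infty$ by Schauder bootstrap. For the $L^\infty$ bound I would argue by the maximum principle: since $G_\delta(x,v)$ is even in $v$ and identically zero at $v=0$, one has $D_vG_\delta(x,0)=0$ and, by differentiating this identity in $x$, also $D_xD_vG_\delta(x,0)=0$. In normal coordinates at an interior maximum $x_0$, the EL equation therefore reduces to
\[
\tfrac{\e}{2}\sum_{i,j}D_{v_iv_j}G_\delta(x_0,0)\,\partial_i\partial_j u(x_0)=\tfrac{1}{\e}W'(u(x_0)).
\]
The matrix $D_{v_iv_j}G_\delta(x_0,0)$ is positive definite and $\mathrm{Hess}\,u(x_0)\le 0$, so the left-hand side is nonpositive; if $u(x_0)>1$, then \eqref{dw2} forces $W'(u(x_0))>0$, a contradiction. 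A symmetric argument at a minimum yields $u\ge -1$.

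\emph{$C^{1,\alpha}$-compactness and passage to the limit $\delta\to 0$.} The previous step gives $\|u_{\e,\delta}\|_{L^\infty}\le 1$ uniformly in $\delta$, and by \eqref{eq:l'} the structural constants of the regularized PDE are $\delta$-uniform. The quasilinear $C^{1,\alpha}$ theory then yields $\|u_{\e,\delta}\|_{C^{1,\alpha}(M)}\le C(\e,F)$ with $\alpha\in(0,1)$ depending only on the ellipticity ratio of $F_\delta^2$, i.e.\ only on $F$. By Arzel\`{a}--Ascoli, a subsequence converges in $C^{1,\alpha'}(M)$ for every $\alpha'<\alpha$ to some $u_\e\in C^{1,\alpha}(M)$. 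To pass the weak EL equation to the limit I would use $F_\delta^2\to F^2$ in $C^1_{loc}(TM)$ together with the uniform convergence of the gradients; convergence of energies follows from uniform convergence of the densities $e_{\e,\delta}(u_{\e,\delta})$, where the identity $F_\delta^2(x,0)=F^2(x,0)=0$ ensures convergence even across the zero section.

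Finally, on the open set $\{|\nabla u_\e|\ne 0\}$, $F^2$ is smooth near $(x,\nabla u_\e(x))$, so the limiting EL equation becomes a uniformly elliptic quasilinear PDE with smooth coefficients, and Schauder bootstrap yields $u_\e\in C^\infty$ there. The main obstacle I foresee is the $\delta$-uniform $C^{1,\alpha}$ estimate: one must verify that the constants in the quasilinear theory depend only on the ellipticity ratio of $F_\delta^2$ (and on $\|W'(u)/\e\|_\infty$), not on higher-order smoothness bounds that could degenerate as $\delta\to 0$. This uniformity is precisely what \eqref{eq:l'} provides, together with the $0$-homogeneity of $D^2F^2$ on $TM\setminus\{0\}$, which keeps $F_\delta^2$ uniformly bounded in $C^2$ on compact subsets of $TM$.
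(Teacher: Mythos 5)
Your proposal is correct and follows the same overall architecture as the paper's proof: quasilinear elliptic regularity for the regularized equation, $\delta$-uniform $C^{1,\alpha}$ estimates coming from the $\delta$-uniform structural constants in \eqref{eq:l'}, Arzel\`a--Ascoli, and passage to the limit in the weak Euler--Lagrange equation using $F_\delta^2\to F^2$ in $C^1_{loc}(TM)$ together with the uniform convergence of the gradients. The one place where you genuinely diverge is the bound $|u|\le1$: you prove it by a pointwise maximum principle at an interior maximum (using $D_vG_\delta(x,0)=0$ and hence $D_xD_vG_\delta(x,0)=0$), whereas the paper tests the weak Euler--Lagrange equation with $\varphi=(u-1)^+$ and uses the strict sign $W'>0$ on $(1,\infty)$. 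Both are valid, but note two consequences of your ordering: the maximum-principle argument needs $u$ to be twice differentiable at the maximum point, so the $L^\infty$ bound must come \emph{after} regularity in your scheme, and therefore your initial De Giorgi--Nash--Moser step must cope with a right-hand side $\e^{-2}W'(u)$ that is a priori only of linear growth in $u\in H^1(M)$; this is harmless thanks to $|W'(s)|\le C(1+|s|)$ from \eqref{dw2} (absorb it as a zeroth-order term in the Moser iteration), but it is an extra step that the paper's truncation test avoids entirely, since the latter applies directly to $H^1$ critical points and makes the right-hand side bounded from the outset. Your closing remarks about the $\delta$-uniformity of the $C^{1,\alpha}$ constants — ellipticity from \eqref{eq:l'} plus the uniform $C^{1,1}$ bound on $G_\delta$ in $v$ coming from the $0$-homogeneity of $D^2F^2$ — correctly identify the only delicate point in the compactness step, and the rest (convergence of energies from uniform convergence of the densities, smoothness on $\{\nabla u_\e\neq0\}$ by Schauder) matches the paper's argument.
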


\begin{proof}
    For each $u=u_{\e,\delta}$ the Euler--Lagrange equation for the functional $E_{\e,\delta}$ reads
    \begin{equation}\label{el.div}\operatorname{div}(a(\nabla u))=\e^{-2}W'(u),\end{equation}
    where in local coordinates $a(\nabla u)=g^{ij}\de_i((F_\delta)_x^2)(\nabla u)\de_j$. Note that the vector field
    $a$ is Lipschitz and, by \eqref{mono}, $a$ is also monotone, in the sense that
    $$\ang{a(v)-a(w),v-w}\ge \lambda'|v-w|^2, \quad \mbox{for all }v,w \in T_xM.$$
    The bound $u\le1$ readily follows by testing the integral form of \eqref{el.div} with $\varphi:=(u-1)^+$ and using the strict inequality $0<W'$ on $(1,\infty)$ in \eqref{dw2}. The bound $u\ge-1$ is obtained analogously. The fact that $u$ is smooth follows
    by expanding \eqref{el.div} into a linear second-order elliptic PDE, using the fact
    that $F_\delta^2$ is smooth.
    
    We can apply the results of \cite[Chapter 4]{LU} to the PDE \eqref{el.div} in divergence form to get the uniform bound
    $$\|u_{\e,\delta}\|_{C^{1,\alpha}}\le C,$$
    where $C$ may depend on $\e$ and the total energy $E_{\e,\delta}(u_{\e,\delta})$, but not on $\delta$, as $\sup_\delta E_{\e,\delta}(u_{\e,\delta})<\infty$.
    By Arzelà--Ascoli, we deduce the existence of a subsequential limit $u_\e$ in $C^{1,\alpha}$, up to slightly decreasing $\alpha$.
    The limit $u_\e$ is thus a critical point of $E_\e$. The smoothness of $u_\e$ on $\{\nabla u_\e\neq0\}$ follows by standard Schauder theory,  using that $F$ is smooth away from the zero section of $TM$.
\end{proof}

\begin{remark}\label{reg.bis}
In fact, any critical point $u_\e$ of $E_\e$ is $C^{1,\alpha}$, as one can see as follows:
for a fixed small ball $B$, we consider a minimizer $u_{\e,\delta}$ of $E_{\e,\delta}$ on $B$,
    with trace equal to $u_\e|_{\de B}$,  and deduce that $\|u_{\e,\delta}\|_{C^{1,\alpha}(B')}\le C$ on a smaller concentric ball $B'$. As $\delta\to0$, any subsequential limit in $H^1(B)$ must coincide with $u_\e$, as shown by the uniqueness result in \cite[Section 4]{LU}, showing that $u_\e$ is $C^{1,\alpha}$ on $B'$, and hence on $M$.
\end{remark}

%We aim to construct a sequence $u_k\in C^3(M)$ of critical points of equation \eqref{equationanis} satisfying $-1\leq u\leq 1$ and with Morse index $m(u_k)\leq 1$. 
%
%In what follows it will be convenient to modify the potential $W$ outside the set $[-1,1]$. More precisely, let $W^*\in C^3(\R)$ satisfy $W^*|_{[-1,1]}=W|_{[-1,1]}$, $W^*(x)>0$ for $|x|>1$ and constant on the set $\R\setminus [-2,2]$.  
%
%Define the energy functional $E^*_\e$ replacing in the definition of $E$ the potential $W$ with $W^*$.
%Notice that any critical point $u$  of $E^*_\e$ with $-1\leq u \leq 1$ is also a critical point of the functional $E_\e$. Also, in the following we will omit the reference to the parameter $\e$.
%
%We observe that 
We are now ready to construct %an $\mathcal{F}$-minimal hypersurface as the limit interface of a sequence of critical points for the functionals $E_\e (u)$. 
nontrivial critical points for $E_\e$, satisfying uniform upper and lower bounds on the energy,
together with a stability property.
By \eqref{eq:l'} and \eqref{bound}, we deduce that the isotropic Allen--Cahn energy
$$\tilde E_\e(u):=\int_M\left[\e\frac{|\nabla u|^2}{2}+\frac{W(u)}{\e}\right]$$
satisfies
\begin{equation}\label{eq:equivalent}
    (\lambda')^2 \tilde E_\e(u)\le E_{\e,\delta}(u)\le(\lambda')^{-2}\tilde E_\e(u), \quad \mbox{for all }u\in H^1(M).
\end{equation} 
Letting
$$\Gamma:=\{\gamma:[-1,1]\to H^1(M)\,:\,\gamma\text{ continuous},\ \gamma(-1)\equiv-1,\ \gamma(1)\equiv1\}$$
and defining the mountain-pass values
$$c_{\e,\delta}:=\inf_{\gamma\in\Gamma}\max_{t\in[-1,1]}E_{\e,\delta}(\gamma(t)), \quad \tilde c_{\e,\delta}:=\inf_{\gamma\in\Gamma}\max_{t\in[-1,1]}\tilde E_{\e}(\gamma(t))$$
as in \cite{Guaraco2018}, we deduce from \cite[Proposition 5.2]{Guaraco2018} that
$$0<\liminf_{\e\to 0}\tilde c_\e\le\limsup_{\e\to 0}\tilde c_\e<\infty.$$
Combining this with \eqref{eq:equivalent}, we conclude that there exist $0<\beta\le\beta'$ and a small $\e_0>0$ such that
$$\beta\le c_{\e,\delta}\le\beta'\quad\text{for all }\delta\in(0,1),\ \e\in(0,\e_0).$$
We can now deduce the following.

\begin{proposition}\label{prop:bounds}
For $\e\in(0,\e_0)$ there exists a critical point $u_\e$ for $E_\e$,
with
$$E_\e(u_\e)\in[\beta,\beta'].$$
Moreover, $u_\e$ is a limit in $C^{1,\alpha}(M)$ of critical points $u_{\e,\delta}$
for $E_{\e,\delta}$ with Morse index $\le1$, along a sequence $\delta\to0$ depending on $\e$.
\end{proposition}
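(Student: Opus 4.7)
The plan is to apply a mountain-pass theorem with Morse index bound to each of the regularized $C^2$ functionals $E_{\varepsilon,\delta}$, and then to pass to the limit $\delta\to 0$ by invoking Proposition~\ref{regularity}. The uniform energy bounds $\beta\le c_{\varepsilon,\delta}\le\beta'$ already established just before the statement will be preserved under this limit.

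For each fixed $\varepsilon\in(0,\varepsilon_0)$ and $\delta\in(0,1)$, I would first check that the mountain-pass geometry for $E_{\varepsilon,\delta}$ is genuinely non-trivial. The path class $\Gamma$ is non-empty: for instance, the map $t\mapsto t$ (thinking of $t$ as a constant function on $M$) is continuous in $H^1(M)$ and joins the constant functions $-1$ and $+1$. At both endpoints $E_{\varepsilon,\delta}(\pm 1)=0$, since $W(\pm1)=0$ and the gradient vanishes, while $c_{\varepsilon,\delta}\ge\beta>0$. Combined with the $C^2$ regularity and the Palais--Smale condition provided by Proposition~\ref{energyproperties}, this allows us to apply a standard refinement of the mountain-pass theorem (in the style of Ghoussoub) producing a critical point $u_{\varepsilon,\delta}$ of $E_{\varepsilon,\delta}$ with critical value exactly $c_{\varepsilon,\delta}\in[\beta,\beta']$ and Morse index at most one (the latter being automatic for a one-parameter minimax family).

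To conclude, I would feed the family $(u_{\varepsilon,\delta})_{\delta\in(0,1)}$ into Proposition~\ref{regularity}: its energies $E_{\varepsilon,\delta}(u_{\varepsilon,\delta})=c_{\varepsilon,\delta}$ are bounded above by $\beta'$ uniformly in $\delta$, so along some subsequence $\delta_k\to 0$ we obtain $C^{1,\alpha}$-convergence to a critical point $u_\varepsilon$ of $E_\varepsilon$, and the energy converges as well: $E_\varepsilon(u_\varepsilon)=\lim_k c_{\varepsilon,\delta_k}\in[\beta,\beta']$. By construction $u_\varepsilon$ is the required $C^{1,\alpha}$-limit of finite-Morse-index critical points for the smooth approximations.

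The only subtle step is the existence of a critical point with Morse index bound at each approximation level, which is precisely the reason for introducing the smoothing $F_\delta$: on the merely $C^1$ functional $E_\varepsilon$ the second variation is not well-defined in general (as $F^2$ need not be $C^2$ at the zero section), so one cannot directly apply a minimax theorem with index information. Once the bound is passed through the approximation, everything else follows from ingredients already collected, namely the Palais--Smale property of Proposition~\ref{energyproperties}, the compactness in Proposition~\ref{regularity}, and the comparison with the isotropic mountain pass from~\cite{Guaraco2018} that yields the energy window $[\beta,\beta']$.
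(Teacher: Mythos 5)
Your proposal is correct and follows essentially the same route as the paper: the authors likewise invoke the classical min-max theory on Banach spaces (citing Ghoussoub) to produce, for each $\delta$, a critical point of the $C^2$ functional $E_{\e,\delta}$ with energy $c_{\e,\delta}\in[\beta,\beta']$ and Morse index $\le 1$, and then pass to the limit $\delta\to0$ via Proposition~\ref{regularity}. The extra details you supply (non-emptiness of $\Gamma$, vanishing of the energy at the endpoints, and the role of Proposition~\ref{energyproperties}) are all consistent with, and implicit in, the paper's two-line argument.
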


\begin{proof}
The existence of critical points $u_{\e,\delta}$ with energy in $[\beta,\beta']$ and Morse index $\le1$
follows from classical min-max theory on Banach spaces (see, e.g., \cite[Chapter 10]{Ghoussoub}).
The convergence along a subsequence to a critical point for $E_\e$ now follows from Proposition \ref{regularity}.
\end{proof}

The main contribution of the present work is to show that,
in such a situation (and even for more general min-max problems),
the energy density of $u_\e$ concentrates along an integral $F$-stationary varifold.

\section{$\Gamma$-convergence of $E_\e$ to $\mathcal{F}$}

As a preliminary result, in fact not needed in the next sections,
we prove in this section that $E_\epsilon$ $\Gamma$-converges to $c_W\cdot\mathcal{F}$,
where $c_W:=\int_{-1}^1\sqrt{2W}$ is a positive constant depending only on $W$. This was first proved by Bouchitté \cite{Bouchitte1990} in a more general framework. We provide a shorter proof in Theorem~\ref{thm:GammaConv}, tailored to $E_\e$. In the following we denote by $1_{S}$ the indicator function of a set $S$, i.e., $1_{S}=1$ on $S$ and $1_{S}=0$ on $M\setminus S$.

\begin{theorem}\label{thm:GammaConv}
    Given a sequence $\e_k\to0$ and maps $u_k\in H^1(M)$
    with
    $\liminf_{k\to\infty}E_{\epsilon_k}(u_k)<\infty$,
    there exists a finite perimeter set $S\subseteq M$ such that
    $$u_k\to 1_{S}-1_{M\setminus S}$$
    pointwise a.e.\ and in $L^2(M)$
    up to a subsequence, as well as
    $$c_W\cdot \mathcal{F}(S)\le\liminf_{k\to\infty}E_{\epsilon_k}(u_k).$$
    Conversely, given a finite perimeter set $S\subseteq M$,
    there exists a family of smooth maps $(u_\epsilon)_{\epsilon>0}$
    with $|u_\epsilon|\le1$,
    $$u_\epsilon\to 1_{S}-1_{M\setminus S}$$
    pointwise a.e.\ and in $L^p(M)$ for all $p<\infty$, and
    $$E_\epsilon(u_\epsilon)\to c_W\cdot \mathcal{F}(S).$$
\end{theorem}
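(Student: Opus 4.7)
The plan is to prove the two halves of the $\Gamma$-convergence separately, adapting the Modica--Mortola argument to the anisotropic setting by exploiting the $1$-homogeneity of $F$ in the second variable. For the liminf, I introduce $\Phi(s) := \int_{-1}^{s}\sqrt{2W(t)}\,dt$, which is smooth and strictly increasing, with $\Phi(-1)=0$ and $\Phi(1)=c_W$. By Young's inequality,
\[
\sqrt{2W(u)}\,F(x,\nabla u) \le \frac{\varepsilon F(x,\nabla u)^2}{2}+\frac{W(u)}{\varepsilon} = e_\varepsilon(u),
\]
and since $\nabla(\Phi\circ u) = \sqrt{2W(u)}\,\nabla u$, the $1$-homogeneity of $F$ gives $F(x,\nabla(\Phi\circ u)) = \sqrt{2W(u)}\,F(x,\nabla u)$. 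Combined with \eqref{bound}, this yields $\lambda\int_M|\nabla(\Phi\circ u_k)| \le E_{\varepsilon_k}(u_k)$. A uniform $L^1$-bound on $\Phi\circ u_k$ follows from \eqref{dw2} (which gives $\int_{\{|u_k|>2\}}|u_k|^2 \le C\varepsilon_k$ together with the quadratic growth of $\Phi$ at infinity), so $\Phi\circ u_k$ is uniformly bounded in $BV(M)$; by Rellich a subsequence converges in $L^1$ to some $v\in BV(M)$. The potential term forces $u_k\to \pm 1$ in measure, so $v = c_W\,1_S$ for a finite perimeter set $S\subseteq M$; extracting a further subsequence and using Vitali convergence (the equi-integrability of $u_k^2$ is guaranteed by the bound $\int_{\{|u_k|>2\}}u_k^2\le C\varepsilon_k$), we get $u_k\to 1_S-1_{M\setminus S}$ pointwise a.e.\ and in $L^2(M)$. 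The liminf inequality $c_W\,\mathcal{F}(S)\le \liminf_k E_{\varepsilon_k}(u_k)$ then follows from Reshetnyak-type lower semicontinuity for convex one-homogeneous integrands on $BV$ under $L^1$ convergence.

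For the recovery sequence, I first approximate $S$ by smooth open sets $S_j\subseteq M$ with $1_{S_j}\to 1_S$ in $L^1(M)$ and $\mathcal{F}(S_j)\to \mathcal{F}(S)$, and then use a diagonal argument to reduce to the case where $\Sigma := \partial S$ is smooth. Let $d(x)$ denote the signed Riemannian distance to $\Sigma$ (positive inside $S$) and $\pi(x)$ the nearest-point projection, both smooth on a tubular neighborhood $U$ of $\Sigma$. Choose a monotone profile $\overline u\in C^\infty(\R)$ equal to $\pm 1$ outside $[-L,L]$ and essentially solving $\overline u' = \sqrt{2W(\overline u)}$, so that $\int_\R [(\overline u')^2/2 + W(\overline u)]\,ds$ is arbitrarily close to $c_W$ by equipartition. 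Define
\[
u_\varepsilon(x) := \overline u\!\left(\frac{d(x)}{\varepsilon\,F(\pi(x),\nu_\Sigma(\pi(x)))}\right)\quad\text{on }U,
\]
extended by $\pm 1$ outside $U$. To leading order, $\nabla u_\varepsilon = \frac{\overline u'}{\varepsilon F(\pi,\nu_\Sigma)}\nabla d + \text{(corrections from $\nabla(F\circ\pi)$)}$, and since $|\nabla d|=1$ with $\nabla d(x)=\nu_\Sigma(\pi(x))$, one gets $\varepsilon F(x,\nabla u_\varepsilon)^2/2 \approx (\overline u')^2/(2\varepsilon)$ in $U$ near $\Sigma$, using continuity of $F$ in the first variable. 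Applying the coarea formula in the normal direction with the substitution $s = d(x)/[\varepsilon F(\pi(x),\nu_\Sigma(\pi(x)))]$ introduces a Jacobian $\varepsilon F(\pi,\nu_\Sigma)$ that cancels the $1/\varepsilon$ in $e_\varepsilon$ and leaves the factor $F$ outside the one-dimensional integral, giving
\[
E_\varepsilon(u_\varepsilon)\to \int_\Sigma F(x,\nu_\Sigma)\,d\mathcal{H}^{n-1}\cdot \int_\R\!\left[\frac{(\overline u')^2}{2}+W(\overline u)\right]ds = c_W\,\mathcal{F}(\Sigma).
\]

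The main obstacle is to justify the initial approximation of a general finite perimeter set $S$ by smooth sets in the anisotropically strict sense $\mathcal{F}(S_j)\to \mathcal{F}(S)$, not merely the lower bound. This requires Reshetnyak's continuity theorem adapted to the Riemannian background, which in turn demands joint convergence of the total variation measures $|D1_{S_j}|\rightharpoonup |D1_S|$ and of the generalized unit normals; the standard construction via mollification of $1_S$ and selection of regular superlevel sets goes through, but the anisotropy must be handled carefully in the curved setting. A secondary subtlety is the rigorous control of the $\nabla(F\circ\pi)$-corrections in $\nabla u_\varepsilon$, which involve second derivatives of $d$; these are uniformly bounded on $U$ and integrate to $o(1)$ after the change of variables, but this error accounting must be tracked explicitly to conclude.
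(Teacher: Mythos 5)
Your proposal is correct and follows essentially the same route as the paper: the liminf half uses the substitution $w=\Phi(u)$ (the paper's $H$), the BV compactness and Reshetnyak lower semicontinuity, while the recovery half reduces to smooth $\partial S$ via strict approximation plus Reshetnyak continuity and then uses the normal-coordinate ansatz $\bar u\bigl(d(x)/(\e F(\pi(x),\nu_\Sigma))\bigr)$ with a truncated heteroclinic profile, exactly as in the paper. The only cosmetic difference is that the paper first truncates $u_k$ to $[-2,2]$ whereas you control the tails directly through the quadratic growth of $W$; both are fine.
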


\begin{proof}
    Given a sequence $(\e_k,u_k)$ as in the statement,
    we will write $\epsilon$ and $u_\epsilon$ in place of $\epsilon_k$ and $u_k$, with a slight abuse of notation.
    Up to a subsequence, we can assume that the $\liminf$ is a limit. Also,
    letting $v_\e:=\min\{\max\{u_\e,-2\},2\}$, note that by \eqref{dw2} we have
    $$E_\e(v_\e)\le E_\e(u_\e),\quad\int_M|u_\e-v_\e|^2\le\int_{\{|u_\e|>2\}}|u_\e|^2\le C\int_M W(u_\e)\le C\e.$$
    Thus, we can assume without loss of generality that $|u_\e|\le2$, up to replacing $u_\e$ with $v_\e$.
    
    Since $|\nabla u_\epsilon|\le \lambda^{-1}F_x(\nabla u_\epsilon)$, by Cauchy--Schwarz we have
    $$\int_M \sqrt{2W(u_\epsilon)}|\nabla u_\epsilon|\le \lambda^{-1}\int_M\sqrt{2W(u_\epsilon)}F_x(\nabla u_\epsilon)
    \le \lambda^{-1}E_\e(u_\e)\le C$$
    for some constant $C>0$ independent of $\epsilon$.
    Thus, letting $H(t):=\int_0^t \sqrt{2W(s)}\,ds$ and $w_\epsilon:=H(u_\epsilon)$,
    we have a uniform $BV$ bound:
    $$\int_M(|w_\epsilon|+|\nabla w_\epsilon|)\le C.$$
    By the compact embedding $BV(M)\hookrightarrow L^1(M)$,
    up to a subsequence we can find $w_0\in BV(M)$
    such that $w_\epsilon\to w_0$ in $L^1$ and pointwise a.e., and thus also in $L^2(M)$, as $|w_\e|\le2$.

    Since $H:\R\to\R$ is continuous and bijective and $w_\epsilon\to w_0$ pointwise a.e.,
    we have
    $$u_\epsilon=H^{-1}(w_\e)\to H^{-1}(w_0)=:u_0 \mbox{ pointwise a.e.},$$
    and the limit $u_0$ takes values in $\pm1$ a.e.\ since
    $$\int_M W(u_0)\le\liminf_{\epsilon\to0}W(u_\epsilon)=\lim_{\epsilon\to0}O(\epsilon)=0.$$
    Moreover, $S:=\{u_0=1\}=\{w_0=H(1)\}$ is a set of finite perimeter,
    as its indicator function is precisely $\frac{w_0-H(-1)}{H(1)-H(-1)}\in BV(M)$.
    The desired bound on the $\mathcal{F}$-perimeter follows easily from the convexity of $F$ at each $x\in M$: indeed, a straightforward adaptation
    of \cite[Theorem 2.38]{AFP} to the Riemannian case gives
    \begin{align*}
    \int_M F_x\left(\frac{\nabla w_0}{|\nabla w_0|}\right)\,d|\nabla w_0|
    &\le\liminf_{\epsilon\to0}\int_M F_x(\nabla w_\epsilon)\\
    &=\liminf_{\epsilon\to0}\int_M H'(u_\epsilon)F_x(\nabla u_\epsilon)\\
    &=\liminf_{\epsilon\to0}\int_M \sqrt{2W(u_\epsilon)}F_x(\nabla u_\epsilon)\\
    &\le\liminf_{\epsilon\to0}E_\epsilon(u_\epsilon).
    \end{align*}
    This gives the desired conclusion, since
    $$\nabla 1_S=\nabla\frac{w_0-H(-1)}{H(1)-H(-1)}=\frac{\nabla w_0}{H(1)-H(-1)}=\frac{\nabla w_0}{c_W}$$
    and hence
    $$\mathcal{F}(S)=\int_{\de^*S}F_x(\nu_x)\,d\mathcal{H}^{n-1}(x)
    =\frac{1}{c_W}\int_M F_x\left(\frac{\nabla w_0}{|\nabla w_0|}\right)\,d|\nabla w_0|.$$

    Conversely, given a set $S\subseteq M$ of finite perimeter, by \cite[Theorem 3.42]{AFP}
    we can find a sequence of smooth open sets $S_k$ such that
    $$\mathcal{H}^n(S_k\Delta S)\to0,\quad \operatorname{Per}(S_k)\to\operatorname{Per}(S),$$
    where $\operatorname{Per}$ denotes the isotropic perimeter. Thus, by Reshetnyak's continuity
    principle \cite[Theorem 2.39]{AFP}, we have the convergence of $\mathcal{F}$-perimeters:
    $$\int_{\de S_k}F_x(\nu_x)\,d\mathcal{H}^{n-1}(x)\to \int_{\de^*S}F_x(\nu_x)\,d\mathcal{H}^{n-1}(x).$$
    Thus, to prove the existence of a recovery sequence, we can just consider the case where $S$ is a smooth open set;
    the conclusion then follows by a standard diagonal argument. In this case, we consider the
    one-dimensional
    heteroclinic solution $U:\R\to\R$ such that
    $$U'=\sqrt{2W(U)},\quad U(0)=0,\quad\lim_{t\to\pm\infty}U(t)=\pm1$$
    and we let
    $$U_\gamma(t):=\begin{cases}
    -1&\text{for }t\le-2\gamma\\
    U(\gamma t/(2\gamma+t))&\text{for }t\in(-2\gamma,-\gamma]\\
    U(t)&\text{for }t\in[-\gamma,\gamma]\\
    U(\gamma t/(2\gamma-t))&\text{for }t\in[\gamma,2\gamma)\\
    1&\text{for }t\ge2\gamma.
    \end{cases} $$
    Using the exponential decay of $U'$ and of $W(U)$ at infinity, it is easy to check that, as $\gamma\to\infty$, we have
    $$\int_{\R\setminus[-\gamma,\gamma]}[(U')^2/2+W(U)]=O(e^{-c\gamma}),\quad
    \int_{-\gamma}^\gamma[(U')^2/2+W(U)]=\int_{-\gamma}^\gamma \sqrt{2W(U)}U'\to c_W.$$
    
    Then, taking $\delta>0$ small and writing $B_{\delta}(\de S)$ as a disjoint union of geodesics of the form
        $$\{\ell_p(t):=\exp_p(t\nu_p)\mid t\in(-\delta,\delta)\}$$
        as $p$ ranges in $\de S$, we can define
    $$u_{\e,\gamma}(\ell_p(t)):=U_\gamma\left(\frac{t}{\e F_p(\nu_p)}\right).$$
    As long as $2\gamma\cdot\e\max_{p\in \de S} F_p(\nu_p)<\delta$,
    we can extend this to a smooth map $u_{\e,\gamma}:M\to[-1,1]$ by $u_{\e,\gamma}:=-1$ on $S\setminus B_{\delta}(\de S)$
    and $u_{\e,\gamma}:=1$ on $(M\setminus S)\setminus B_{\delta}(\de S)$.
    We observe that
    $$\nabla u_{\e,\gamma}(\ell_p(t))=U_\gamma'\left(\frac{t}{\e F_p(\nu_p)}\right)\frac{\ell_p'(t)}{\e F_p(\nu_p)}+O(|t|/\e)=U_\gamma'\left(\frac{t}{\e F_p(\nu_p)}\right)\frac{\ell_p'(t)}{\e F_p(\nu_p)}+O(\gamma),$$
    where the error term comes from differentiation of $F\circ\nu$ at the nearest-point projection $p$;
    note that $|U_\gamma'|\le C$ and that both sides vanish when $|t|>2\gamma\e F_p(\nu_p)$.
    Moreover,
    $$|\nabla u_{\e,\gamma}|(\ell_p(t))\le C\e^{-1}e^{-c\gamma}\quad\text{for }\gamma\e F_p(\nu_p)<|t|<\delta$$
    and similarly
    $$W(u_{\e,\gamma})(\ell_p(t))\le Ce^{-c\gamma}\quad\text{for }\gamma\e F_p(\nu_p)<|t|<\delta.$$
    Thus, we have
    $$\int_{\{\gamma\e F_p(\nu_p)<|t|<\delta\}}[\e F_{\ell_p(t)}(\nabla u_{\e,\gamma})^2/2+\e^{-1}W(u_{\e,\gamma})](\ell_p(t))\,dt
    =O(e^{-c\gamma}),$$
    for a possibly different $c>0$, as the integrand is nonzero only on an interval of size $O(\gamma\e)$.
    Hence, 
    $$E_\epsilon(u_{\e,\gamma})=(1+O(\delta))\int_{\de S}\int_{\{|t|<\gamma\e F_p(\nu_p)\}}[\e F_{\ell_p(t)}(\nabla u_{\e,\gamma})^2/2+\e^{-1}W(u_{\e,\gamma})](\ell_p(t))\,dt\,d\mathcal{H}^{n-1}(p)+O(e^{-c\gamma}).$$
    Since
    $$F_{\ell_p(t)}(\nabla u_{\e,\gamma}(\ell_p(t)))=(1+O(\delta))\e^{-1}U_\gamma'\left(\frac{t}{\e F_p(\nu_p)}\right)+O(\gamma)$$
    by the previous expansion, we deduce that
    \begin{align*}
    E_\epsilon(u_{\e,\gamma})&=(1+O(\delta))\int_{\de S}\int_{\{|t|<\gamma\e F_p(\nu_p)\}}
    \left[\frac{1}{2\e}U_\gamma'\left(\frac{t}{\e F_p(\nu_p)}\right)^2+\frac{1}{\e}W(U_\gamma)\left(\frac{t}{\e F_p(\nu_p)}\right)\right]\,dt\,d\mathcal{H}^{n-1}(p)\\
    &\quad+O(\gamma^2\e)+O(e^{-c\gamma}),
    \end{align*}
    as long as $\gamma\e<1$. By a simple change of variables, we get
    $$E_\epsilon(u_{\e,\gamma})=(1+O(\delta))\int_{\de S}F_p(\nu_p)\,d\mathcal{H}^{n-1}(p)
    \cdot \int_{-\gamma}^\gamma[(U_\gamma')^2/2+W(U_\gamma)]+O(\gamma^2\e)+O(e^{-c\gamma}).$$
    This converges to $\mathcal{F}(S)$ as we let $\e\to0$, then $\gamma\to\infty$
    and $\delta\to0$.
    By a diagonal argument, the conclusion follows.
    %we can simply take the Lipschitz functions
    %$$u_\epsilon(x):=\begin{cases}U(F\circ d_{\de S}(x)/\epsilon)&\text{on } S\\
    %U(-F\circ d_{\de S}(x)/\epsilon)&\text{on }M\setminus S,\end{cases}$$
    %where $d_{\de S}$ is the distance from the boundary.
    %(positive on $S$ and negative on $M\setminus\bar S$).
    %Taking any small enough $\delta>0$, we have
    %$$E_\epsilon(u_\epsilon)%=\int_M 2\sqrt{W(u_\epsilon)}F(\nabla u_\epsilon)
    %=\int_{B_\delta(\de S)}[\e F(\nabla u_\e)^2/2+\e^{-1}W(u_\e)]+O(\epsilon^{-1}e^{-c\delta/\epsilon}),$$
    %thanks to the fact that the energy density of $U$ decays exponentially fast at infinity.
    %Then,, it is straightforward to check that
    %$$\int_{B_\delta(\de S)}[\e F(\nabla u_\e)^2/2+\e^{-1}W(u_\e)]
    %=\int_{\de S}\int_{-\delta}^\delta \sqrt{2W(U_\epsilon)}U_\epsilon'\cdot|F(\nu(p))\,d\mathcal{H}^{n-1}(p)+O(\delta),$$
    %where $U_\epsilon(t):=U(t/\epsilon)$, since $\nabla u_\e=U_\e'(t)\ell_p'(t)$ at $\ell_p(t)$
    %By letting $\epsilon\to0$ we get
    %$$\lim_{\epsilon\to0}E_\epsilon(u_\epsilon)
    %=\int_{\de S}\int_{-\infty}^\infty2\sqrt{W(U_\epsilon)}U_\epsilon'\cdot|F(\nu(p))\,d\mathcal{H}^{n-1}(p)+O(\delta)
    %=c_W\int_{\de S}F(\nu)+O(\delta),$$
    %and the conclusion follows once we let $\delta\to0$.
\end{proof}

\section{A generalization of Modica's bound}
The following pointwise bound for critical points $u$ of the isotropic Allen--Cahn
$$\e\frac{|\nabla u|^2}{2}\le\frac{W(u)}{\e}$$
was first proved by Modica \cite{Modica} in the Euclidean setting. This is the fundamental tool used in deriving a sharp monotonicity formula
in the isotropic case \cite{Ilmanen,HutchinsonTonegawa2000}.

In the anisotropic setting, we obtain an analogous bound; as expected, it no longer yields a monotonicity formula. Nonetheless, it will be a crucial ingredient
in the proof of rectifiability of the limit of the energy densities. 

From now on up to the end of the paper, given a map $u:M\to \R$, when evaluating $F(x,\nabla u(x))$ with a slight abuse of notation we will look at $\nabla u$ as a vector field in the tangent bundle $\nabla u: M\to TM$, and use the compact expression $F(\nabla u)$ in place of $F(\cdot,\nabla u(\cdot))$.

\begin{theorem}\label{thm:modicabound}
	Letting $u:M\to[-1,1]$ be a critical point of $E_\e$, we have
    $$F(\nabla u)\le\e^{-1}\sqrt{2W(u)}+ C,$$
    where $C$ depends only on $M^n,g$ and $F$.  As a consequence,
    $$\frac{\e F^2(\nabla u)}{2}\le\frac{W(u)}{\e}+C$$
    for a possibly different $C=C(M^n,g,F)$.
\end{theorem}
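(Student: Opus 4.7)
Since $u_\varepsilon$ is only $C^{1,\alpha}$ globally and has no a priori Hessian bound near $\{\nabla u_\varepsilon=0\}$, I would first prove the pointwise inequality for the smooth critical points $u_{\varepsilon,\delta}$ of $E_{\varepsilon,\delta}$ produced by Proposition~\ref{regularity}, with constants uniform in $\delta\in(0,1)$ (thanks to \eqref{eq:l'}), and then pass to the limit $\delta\to0$ via the $C^{1,\alpha}$-convergence $u_{\varepsilon,\delta}\to u_\varepsilon$ and the smooth convergence $F_\delta^2\to F^2$ on $TM\setminus\{0\}$.

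\textbf{The Modica $P$-function.} Set
$$P := \frac{\varepsilon F_\delta^2(\nabla u)}{2} - \frac{W(u)}{\varepsilon}, \qquad u := u_{\varepsilon,\delta},$$
and aim to show $P\le K\varepsilon$ on $M$ for some $K=K(M,g,F)$. This implies the first inequality of the theorem: squaring gives $(\varepsilon F_\delta(\nabla u))^2\le 2W(u)+2K\varepsilon^2$, and then $\sqrt{a+b}\le\sqrt a+\sqrt b$ yields $F_\delta(\nabla u)\le\varepsilon^{-1}\sqrt{2W(u)}+\sqrt{2K}$; the second inequality follows by squaring and using that $W$ is bounded on $[-1,1]$. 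Let $x_0\in M$ be a maximum point of $P$: either $\nabla u(x_0)=0$, and then $P(x_0)\le 0$, or $u$ is smooth near $x_0$ by uniform ellipticity of the Euler--Lagrange equation, so that $\nabla P(x_0)=0$ and $LP(x_0)\le 0$, where
$$L:=\varepsilon\,a^{ij}(x,\nabla u)\,\nabla^2_{ij},\qquad a^{ij}(x,p):=\tfrac{1}{2}\partial^2_{p_ip_j}F_\delta^2(x,p),$$
is the linearized anisotropic operator, elliptic by \eqref{F.square.cvx}.

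\textbf{Key inequality, main obstacle, and closure.} The heart of the proof is to derive, at such a maximum $x_0$, a lower bound of the form $LP(x_0)\ge c\,P(x_0)-K\varepsilon$ for some $c>0$; combined with $LP(x_0)\le 0$ this yields $P(x_0)\le K\varepsilon/c$. I would obtain such a bound by differentiating the Euler--Lagrange equation $\varepsilon\nabla_i a^i(x,\nabla u)=W'(u)/\varepsilon$ (with $a^i:=\tfrac12\partial_{p_i}F_\delta^2$), commuting second covariant derivatives (which produces Ricci curvature contractions), and exploiting $\nabla P(x_0)=0$ to cancel first-order contributions of $\nabla^2 u$ contracted with $\nabla u$. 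The resulting expression for $LP(x_0)$ then decomposes into: (i) a \emph{nonnegative} quadratic form in $\nabla^2 u$ coming from \eqref{F.square.cvx}; (ii) algebraic terms in $W,W',W''$ and $F_\delta^2(\nabla u)$, the latter entering via the $2$-homogeneity identity $a^{ij}u_iu_j=F_\delta^2(\nabla u)$, which (after substituting $\varepsilon F_\delta^2(\nabla u)=2P+2W/\varepsilon$) supplies the desired $cP$-contribution; (iii) a residual gathering the non-autonomy terms $\partial_xF_\delta^2,\partial^2_{xp}F_\delta^2$, curvature cross terms, and mixed cross terms produced by differentiating the EL equation. The main obstacle, flagged in the introduction, is controlling the residual despite the presence of the unbounded Hessian $\nabla^2 u$: I would absorb the Hessian-quadratic components of the residual into the positive quadratic form (i) via Young's inequality, weighted by the $\delta$-independent ellipticity constant $\lambda'$ from \eqref{eq:l'}, and then dominate the remaining pieces by $K_1\varepsilon(1+F_\delta^2(\nabla u))$ using $1$-homogeneity of $F_\delta$ together with the smoothness bounds \eqref{eq:l'}--\eqref{bound}. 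Substituting the identity $\varepsilon F_\delta^2(\nabla u(x_0))=2P(x_0)+2W(u(x_0))/\varepsilon$ and using $W\le\max_{[-1,1]}W$, the scalar inequality $0\ge LP(x_0)$ closes in the desired form $P(x_0)\le K\varepsilon$, completing the proof.
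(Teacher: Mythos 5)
Your overall skeleton (maximum principle for a Modica-type quantity, absorption of Hessian errors via the uniform convexity of $F$) matches the paper's strategy, but the function you maximize and the mechanism you invoke to close the estimate do not work. The paper does not use the discrepancy $P=\frac{\e}{2}F^2(\nabla u)-\frac{W(u)}{\e}$; it maximizes $\zeta:=F(\nabla u)-\e^{-1}\sqrt{2W(u)}$, and the difference is essential. If you carry out your computation for $P$ in the model case (autonomous integrand, flat metric), the two candidate sources of your $cP$-contribution cancel identically: differentiating the Euler--Lagrange equation along $a^i:=\de_{p_i}(F^2/2)(\nabla u)$ produces $+\e^{-1}W''(u)F^2(\nabla u)$ (via Euler's identity $a^iu_i=F^2(\nabla u)$), the term $-\e^{-1}W''(u)\,a^{jk}u_ju_k$ produces exactly $-\e^{-1}W''(u)F^2(\nabla u)$ (via $2$-homogeneity of $F^2$), and the third-derivative term $\de_{ij\ell}(F^2/2)(\nabla u)\de_\ell u$ vanishes by $0$-homogeneity of $D^2(F^2/2)$. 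One is left with $LP(x_0)=\e^2 I-\e^{-2}W'(u)^2$ with $I\ge0$: there is no $P$ in this identity, so the inequality $LP(x_0)\ge cP(x_0)-K\e$ cannot be extracted this way, and there is no positive term left over to absorb the non-autonomy and curvature errors. This is a genuine gap, not a presentational one.

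The paper's fix is structural: running the same computation for $\zeta$ replaces $W''$ by $(\sqrt{2W})'$ and $(\sqrt{2W})''$ in the potential terms while the Euler--Lagrange term contributes $\e^{-2}W''(u)F(\nabla u)$ (note $F$, not $F^2$, since $\de_ku\,\de_kF(\nabla u)=F(\nabla u)$ by $1$-homogeneity), so no cancellation occurs; the identity $W''=(\sqrt{2W})''\sqrt{2W}+\frac{(W')^2}{2W}$ then regroups everything as $-\e^{-1}(\sqrt{2W})''(u)F(\nabla u)\,\zeta+\e^{-2}\frac{(W')^2}{2W}\,\zeta\le C\e^{-1}F(\nabla u)$ at the maximum, and the hypothesis $(\sqrt{W})''\le-c<0$ on $(-1,1)$ from \eqref{dw1} --- which your proposal never uses --- yields $c\e^{-1}F(\nabla u)\zeta\le C\e^{-1}F(\nabla u)$, i.e.\ $\zeta\le C/c$. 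Two further remarks: the paper avoids the $\delta$-regularization here altogether, since at a maximum of $\zeta$ with $\zeta>\Lambda>0$ one has $\nabla u\neq0$ and hence $u$ is smooth nearby (Remark \ref{reg.bis} plus Schauder theory); and your route through $F_\delta$ would have to confront the failure, near the zero section, of the homogeneity identities used above (in particular the vanishing of $\de_{ij\ell}(F_\delta^2/2)(\nabla u)\de_\ell u$ and the $(-1)$-homogeneity bound $|\de_{ij\ell}(F_\delta^2/2)(\nabla u)|\le C|\nabla u|^{-1}$), which would create new quadratic Hessian errors without the compensating $|\nabla u|^{-1}$ weight that makes the absorption into $I$ possible.
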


\begin{proof}
Recall that, by Proposition \ref{regularity} and Remark \ref{reg.bis}, away from $\{\nabla u=0\}$, $u$ is smooth and we can expand the Euler--Lagrange equation \eqref{el.div} as
$$A^{ij}\partial_{ij}u=\e^{-2}W'(u)+O(\nabla u),$$
where the coefficients $A^{ij}(x)$ are essentially the second derivatives of $F_x^2/2$ at $\nabla u$;
more precisely, in any coordinate chart we have
$$A^{ij}=g^{i\ell}g^{jm}\de_{\ell m}(F^2/2)(\nabla u)=g^{i\ell}g^{jm}[F(\nabla u)\partial_{\ell m}F(\nabla u)+\partial_\ell F(\nabla u)\partial_mF(\nabla u)],$$
where when differentiating $F$ and $F^2$ we use the convention that $\de_k F$ denotes the
partial derivative along $\de_k=\frac{\de}{\de x^k}\in T_xM$ of $F_x=F|_{T_xM}$, at any given $x\in M$
(and similarly for higher-order derivatives and for $F^2$).
The error term is a function $G(x,\nabla u)$, with $G(x,v)$ smooth away from $\{v=0\}$ and $1$-homogeneous in $v$,
so that $|\frac{\de^{\alpha+\beta}G}{\de x^\alpha\de v^\beta}|\le C_{\alpha,\beta}|v|^{1-|\beta|}$.

We would like to show that
$$F(\nabla u)-\e^{-1}\sqrt{2W(u)}\le\Lambda$$
everywhere, where $\Lambda>0$ will be chosen later. To do this, consider a maximum point $\hat x$ for the difference
$$\zeta:=F(\nabla u)-\e^{-1}\sqrt{2W(u)},$$
and assume by contradiction that $\zeta(\hat x)>\Lambda$,
so that in particular $\nabla u(\hat x)\neq0$, and thus $u$ is smooth around $\hat x$, as well as $|u(\hat x)|<1$.
We now choose a coordinate system centered at $\hat x$, with $g_{ij}(0)=\delta_{ij}$ and $\de_kg_{ij}(0)=0$, as well as
$$\nabla u(0)=|\nabla u(0)|e_n.$$
Since $A^{ij}$ is positive definite, we have
$$A^{ij}\partial_{ij}[F(\nabla u)-\e\sqrt{2W(u)}](\hat x)=A^{ij}\de_{ij}\zeta(\hat x)\le0.$$

We now compute that at $\hat x=0$ we have
\begin{align*}
\partial_{ij}[F(\nabla u)-\e^{-1}\sqrt{2W(u)}]
&=\partial_i[g^{k\ell}\partial_kF(\nabla u)\partial_{j\ell}u-\e^{-1}(\sqrt{2W})'(u)\de_ju+O(\nabla u)]\\
&=\partial_{k\ell}F(\nabla u)\partial_{i\ell}u\partial_{jk}u
+\partial_kF(\nabla u)\partial_{ijk}u-\e^{-1}(\sqrt{2W})'(u)\de_{ij}u\\
&\quad-\e^{-1}(\sqrt{2W})''(u)\de_iu\de_ju+O(\nabla u)+O(D^2 u),
\end{align*}
%since $\nabla u(0)$ is parallel to $e_n$,
%an important observation is that the last error term
%does not involve $\de_{nn}u(0)$, since $F$ is $0$-homogeneous and hence
%$$(\de_{x_i}\de_nF)(e_n)=0.$$
where the error terms come from differentiating $F$ in the spatial variable at least once.

Once we multiply by $A^{ij}$ and sum over $i,j$, we get
\begin{align}\label{eq.with.I}
    I
    +A^{ij}\partial_kF(\nabla u)\partial_{ijk}u
    -\e^{-3}\frac{W'(u)^2}{\sqrt{2W(u)}}
    -\e^{-1}(\sqrt{2W})''(u)F^2(\nabla u)
    &\le O(\e^{-1}\nabla u)+O(D^2 u),
\end{align}
where we omit the sum over $i,j$ and we set
$$I:=A^{ij}\partial_{k\ell}F(\nabla u)\partial_{i\ell}u\partial_{jk}u$$
and we used the Euler--Lagrange equation and the fact that
$$A^{ij}\partial_iu\partial_ju=D^2(F^2/2)(\nabla u)[\nabla u,\nabla u]=F^2(\nabla u),$$
since $F^2$ is 2-homogeneous.

We rewrite the second term of \eqref{eq.with.I} as
\begin{align*}
    &\partial_k(A^{ij}\partial_{ij}u)\partial_kF(\nabla u)-\partial_kA^{ij}\partial_kF(\nabla u)\partial_{ij}u\\
    &=\e^{-2}W''(u)\partial_k u\partial_kF(\nabla u)
    -\partial_{ij\ell}(F^2/2)(\nabla u)\partial_{k\ell}u\partial_kF(\nabla u)\partial_{ij}(u)+O(\nabla u)+O(D^2u),
\end{align*}
where we used again the Euler--Lagrange equation and
we expanded $\partial_kA^{ij}$ using the chain rule.
Since $F$ is 1-homogeneous, at $\hat x$ we have
$$\partial_k u\partial_kF(\nabla u)=F(\nabla u).$$
Moreover, since $\nabla\zeta(\hat x)=0$, at $\hat x$ we have
$$\partial_{k\ell}u\partial_kF(\nabla u)=\partial_\ell[F(\nabla u)]+O(\nabla u)=\e^{-1}\partial_\ell\sqrt{2W(u)}+O(\nabla u)=\e^{-1}(\sqrt{2W})'(u)\de_\ell u
+O(\nabla u),$$
giving
$$\partial_{ij\ell}(F^2/2)(\nabla u)\partial_{k\ell}u\partial_kF(\nabla u)\partial_{ij}(u)
=\e^{-1}(\sqrt{2W})'(u)\partial_{ij\ell}(F^2/2)(\nabla u)\de_\ell u\partial_{ij}(u)
+O(D^2u),$$
thanks to the fact that $\partial_{ij\ell}(F^2/2)$ is $(-1)$-homogeneous, so that
$|\partial_{ij\ell}(F^2/2)(\nabla u)|\le C|\nabla u|^{-1}$.
Also, since $\de_{ij}(F^2/2)$ is $0$-homogeneous, we have
$$\partial_{ij\ell}(F^2/2)(\nabla u)\de_\ell u=0.$$
In summary, at $\hat x$ we get
$$A^{ij}\partial_kF(\nabla u)\partial_{ijk}u
=\e^{-2}W''(u) F(\nabla u)+O(D^2u).$$
%note that, in the error term, $\de_{nn}u$ does not appear
%since $\de_{nn\ell}(F^2/2)(\nabla u)$ vanishes at $\hat x$.
Thus, \eqref{eq.with.I} becomes
$$I+\e^{-2}W''(u)F(\nabla u)
-\e^{-3}\frac{W'(u)^2}{\sqrt{2W(u)}}
    -\e^{-1}(\sqrt{2W})''(u)F^2(\nabla u)
    \le O(\e^{-1}\nabla u)+O(D^2 u).$$
    
Now $I$ is nonnegative, since it is the trace of the product of two positive semidefinite matrices; in fact,
by \eqref{F.cvx} and the fact that $D^2F(v)[v,\cdot]=0$ for $v\neq0$ (by $0$-homogeneity of $DF_x$),
we have
\begin{align*}
I&:=A^{ij}\partial_{k\ell}F(\nabla u)\partial_{i\ell}u\partial_{jk}u\\
&\phantom{:}=\operatorname{tr}((D^2F)(D^2uAD^2u))\\
&\phantom{:}\ge \frac{\lambda}{|\nabla u|}\operatorname{tr}((I-e_n^*\otimes e_n^*)(D^2uAD^2u))\\
&\phantom{:}\ge \frac{\lambda^2}{|\nabla u|}\operatorname{tr}((I-e_n^*\otimes e_n^*)(D^2u)^2)\\
&\phantom{:}\ge \frac{\lambda^2}{|\nabla u|}\sum_{i=1}^n\sum_{j=1}^{n-1}|\de_{ij}u|^2,
\end{align*}
where we repeatedly used the fact that $\operatorname{tr}(AB)\ge\operatorname{tr}(A'B')$ if $A\ge A'\ge0$ and $B\ge B'\ge0$.
Moreover, by the Euler--Lagrange equation, the lower bound $A^{nn}\ge\lambda$, and the estimate $|W'|\le C\sqrt{W}$ (as $W''(\pm1)>0$), we have
\begin{equation}\label{nn}|\de_{nn}u|\le C\sum_{(i,j)\neq(n,n)}|\de_{ij}u|+O(\nabla u)+C\e^{-2}\sqrt{W(u)}.
\end{equation}
By the assumption that $\e^{-1}\sqrt{W(u)}<F(\nabla u)$ at $\hat x$, the estimate \eqref{nn} in turn implies that at $\hat x$
$$|\de_{nn}u|\le C\sum_{(i,j)\neq(n,n)}|\de_{ij}u|+O(\e^{-1}\nabla u).$$
Thus, by Cauchy's inequality,
we can absorb the term $O(D^2u)$, getting
\begin{align*}
I+O(D^2u)\ge -C|\nabla u|-C\e^{-1}|\nabla u|\ge -C\e^{-1}F(\nabla u)
\end{align*}
at $\hat x$, where $C$ depends on the implied constant in $O(D^2u)$. We deduce that
\begin{equation}\label{eq:int}
    \e^{-2}W''(u)F(\nabla u)
-\e^{-3}\frac{W'(u)^2}{\sqrt{2W(u)}}
    -\e^{-1}(\sqrt{2W})''(u)F^2(\nabla u)
    \le C\e^{-1}F(\nabla u).
\end{equation}

Since $W''=(\sqrt{2W})''\cdot\sqrt{2W}+\frac{(W')^2}{2W}$,
we can rewrite \eqref{eq:int} as
\begin{align*}-\e^{-1}(\sqrt{2W})''(u)F(\nabla u)[F(\nabla u)-\e^{-1}\sqrt{2W(u)}]
+\e^{-2}\frac{(W')^2}{2W}[F(\nabla u)-\e^{-1}\sqrt{2W(u)}]
\le -I+O(\e^{-1}\nabla u)\end{align*}
at $\hat x$; recalling that $-(\sqrt{2W})''\ge c>0$ by \eqref{dw1},
we reach
$$c\e^{-1}F(\nabla u)[F(\nabla u)-\e^{-1}\sqrt{2W(u)}]+\e^{-2}\frac{(W')^2}{2W}[F(\nabla u)-\e^{-1}\sqrt{2W(u)}]\le C\e^{-1}F(\nabla u).$$
This contradicts the fact that $F(\nabla u)-\e^{-1}\sqrt{2W(u)}>\Lambda$, once we take $\Lambda$ large enough that $c\Lambda\ge C$.
\end{proof}

\section{Uniform bounds for stable solutions}

In this section we derive bounds on the second fundamental form of level sets for stable solutions $u_{\e,\delta}$ with respect to $E_{\e,\delta}$ and consequently for limits $u_\e$ thereof. Moreover we obtain lower density bounds for the energy. Since $F^2$ is not of class $C^2$, in many statements
we will replace it with the perturbed integrands $F_\delta^2$ already considered earlier, in order to make sense of stability.
Let us start with a simple observation.

\begin{proposition}\label{zero.unstable}
    There exists $\rho=\rho(M^n,g,F)>0$ large enough
    such that the following holds.
    Given $x_0\in M$, we identify $T_{x_0}M\cong\R^n$ isometrically, so that
    the restriction $(F_\delta)_{x_0}=F_\delta|_{T_{x_0}M}$ gives an autonomous integrand $\bar F_\delta:\R^n\to[0,\infty)$.
    If a constant $\bar u\in[-1,1]$ is a stable critical point for
    $$\bar E_{1,\delta}(u):=\int_{B_\rho(0)}[{\bar F_\delta(\nabla u)^2}/{2}+W(u)]$$
    on the Euclidean ball $B_\rho(0)\subset\R^n$,
    then $\bar u\in\{-1,1\}$.
\end{proposition}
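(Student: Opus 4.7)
The plan is to identify the only possible interior constant critical point and rule it out by a scaling test of the second variation.

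First, any constant $\bar u \in (-1, 1)$ can be a critical point of $\bar E_{1,\delta}$ only if $W'(\bar u) = 0$, since the Euler--Lagrange equation at a constant reduces to this. The strict concavity of $\sqrt{W}$ on $(-1, 1)$ from \eqref{dw1} implies that $(\sqrt{W})'$ has at most one zero $\bar u^* \in (-1, 1)$, and combining $W'(\bar u^*) = 0$ with the identity $W'' = \sqrt{2W}\,(\sqrt{2W})'' + (W')^2/(2W)$ (already used in the proof of Theorem \ref{thm:modicabound}) yields the quantitative bound $W''(\bar u^*) \le -2c\sqrt{W(\bar u^*)} < 0$. If no such $\bar u^*$ exists the statement holds vacuously, so I focus on the remaining case.

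Next, Proposition \ref{energyproperties} applied at the constant $\bar u^*$, for which $\nabla \bar u \equiv 0$, reduces the second variation to
$$Q(v) := D^2 \bar E_{1,\delta}(\bar u^*)[v, v] = \int_{B_\rho} \left[\frac{1}{2} D^2 G(0)[\nabla v, \nabla v] + W''(\bar u^*)\, v^2\right],$$
where $G := \bar F_\delta^2$ is the smooth convex autonomous integrand. Since $G(0) = 0$ is the minimum, $DG(0) = 0$, and $G(w) \le (\lambda')^{-2}|w|^2$ by \eqref{eq:l'}, a quadratic Taylor expansion at the minimum yields the $\delta$-uniform matrix inequality $D^2 G(0) \le 2(\lambda')^{-2}\operatorname{Id}$.

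Finally, I would test $Q$ with a rescaled bump $v(x) := \phi(x/\rho)$ for a fixed nonzero $\phi \in C^1_c(B_1)$, so that a change of variables gives
$$Q(v) \le (\lambda')^{-2}\, \rho^{n-2} \int_{B_1} |\nabla \phi|^2 + W''(\bar u^*)\, \rho^n \int_{B_1} \phi^2,$$
which becomes strictly negative once $\rho$ exceeds a threshold $\rho_0$ depending only on $\lambda'$, $|W''(\bar u^*)|$, and $\phi$. Since $W$ is fixed throughout the paper and $\lambda'$ depends only on $F$, this threshold depends only on $F$; choosing $\rho \ge \rho_0$ then contradicts the stability of $\bar u^*$ and forces $\bar u \in \{-1, 1\}$. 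I expect the subtle point to be the uniform-in-$\delta$ control on $D^2 G(0)$, since without it $\rho_0$ could blow up as $\delta \to 0$; everything else is a routine Poincaré-style scaling.
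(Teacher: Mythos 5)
Your argument is correct and is essentially the paper's proof: both reduce to the stability inequality at a constant critical value $\bar u^*\in(-1,1)$ with $W'(\bar u^*)=0$, derive the uniform negative bound on $W''(\bar u^*)$ from the strict concavity of $\sqrt W$, and contradict the resulting fake Poincar\'e inequality $c\int\eta^2\le C\int|\nabla\eta|^2$ on $B_\rho$ by taking $\rho$ large. Your explicit rescaled test function and the $\delta$-uniform Taylor bound $D^2G_\delta(0)\le 2(\lambda')^{-2}\,\mathrm{Id}$ just make precise the steps the paper leaves implicit.
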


\begin{proof}
Indeed, assume by contradiction that $u_0\in(-1,1)$. For any $\eta\in C^1_c(B_\rho(0))$, stability gives
$$\int_{B_\rho(0)}[D^2(\bar F_\delta^2/2)(0)[\nabla\eta,\nabla\eta]+W''(\bar u)\eta^2]\ge0.$$
Since $W'(\bar u)=W'(\pm1)=0$ and $W''(\pm1)>0$, we must have $|\bar u|\le 1-c$ for some $c>0$.
Also, using again the fact that $W'(\bar u)=0$, we have
$$W''(\bar u)=2(\sqrt{W})''(\bar u)\sqrt{W(\bar u)}.$$
Using \eqref{dw1}, we see that $-W''(\bar u)\ge c$ for a possibly different $c>0$.
We deduce that
\begin{equation}\label{const.stab}c\int_{B_\rho}\eta^2\le \int_{B_\rho}D^2(\bar F_\delta^2/2)(0)[\nabla\eta,\nabla\eta]
\le C\int_{B_\rho}|\nabla\eta|^2\end{equation}
for all $\eta\in C^1_c(B_\rho)$. This is impossible once we take $\rho>0$ large enough.
\end{proof}

The following are useful consequences.

\begin{lemma}\label{jac.lb}
    Given $\gamma>0$,
    there exist constants $c>0$ and $\epsilon_0>0$, depending on $\gamma$ and $(M^n,g,F)$, such that
    $$\int_{B_{\rho\epsilon}(p)}\sqrt{2W(u)}|\nabla u|\ge c\epsilon^{n-1}$$
    whenever $|u(p)|\le1-\gamma$,
    for any stable critical point $u:B_{\rho\e}(p)\to\R$ of $E_{\e,\delta}$, provided that
    $\epsilon\in(0,\epsilon_0)$.
\end{lemma}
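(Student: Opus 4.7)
The plan is to argue by contradiction via a blow-up and compactness argument, ultimately reducing the statement to (a mild generalization of) Proposition~\ref{zero.unstable}. Suppose the lemma fails for some $\gamma>0$: we find sequences $\e_k\to 0$, $\delta_k\in(0,1)$, points $p_k\in M$, and stable critical points $u_k:B_{\rho\e_k}(p_k)\to[-1,1]$ of $E_{\e_k,\delta_k}$ satisfying $|u_k(p_k)|\le 1-\gamma$ but
\[
\e_k^{1-n}\int_{B_{\rho\e_k}(p_k)}\sqrt{2W(u_k)}\,|\nabla u_k|\longrightarrow 0.
\]
Up to subsequences, assume $p_k\to p_\infty\in M$ and $\delta_k\to\delta_\infty\in[0,1]$.

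Working in normal coordinates centered at $p_k$, I would rescale by $w_k(y):=u_k(p_k+\e_k y)$ for $y\in B_\rho(0)\subset\R^n$. Using the $2$-homogeneity of $F_\delta^2$, a direct change of variables shows that $w_k$ is a stable critical point of the scale-free functional
\[
\bar E_k(w):=\int_{B_\rho}\Big[\tfrac{1}{2} F_{\delta_k}(p_k+\e_k y,\nabla w)^2+W(w)\Big]\,d\mu_k(y),
\]
with $d\mu_k$ the pulled-back volume form approaching the Euclidean $dy$; the rescaled contradiction hypothesis becomes $\int_{B_\rho}\sqrt{2W(w_k)}\,|\nabla w_k|\to 0$. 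By the $C^{1,\alpha}$ estimates from Proposition~\ref{regularity}, whose constants depend only on the uniform ellipticity $\lambda'$ and not on $k$, we have $C^{1,\alpha}_{\rm loc}(B_\rho)$-precompactness; extract $w_k\to w_\infty$ in $C^{1,\alpha}_{\rm loc}$. Passing to the limit gives $\int_{B_\rho}\sqrt{2W(w_\infty)}\,|\nabla w_\infty|=0$, so pointwise either $|w_\infty|=1$ or $\nabla w_\infty=0$. A continuity/connectedness argument (a connected component of $\{|w_\infty|<1\}$ on which $w_\infty\equiv c_0\in(-1,1)$ cannot have interior boundary inside $B_\rho$, since continuity would force the boundary value to be both $\pm 1$ and $c_0$) yields $w_\infty\equiv c_0$ throughout $B_\rho$ with $|c_0|\le 1-\gamma$. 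Passing to the limit in the weak Euler--Lagrange equation also gives $W'(c_0)=0$, so $c_0$ is an interior critical point of $W$ in $(-1,1)$.

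Finally, I would pass stability to the limit. The rescaled stability of $w_k$ reads
\[
\int_{B_\rho}\big[B_k[\nabla\eta,\nabla\eta]+W''(w_k)\eta^2\big]\,d\mu_k\ge 0,\qquad\forall\eta\in C_c^1(B_\rho),
\]
with $B_k:=\tfrac{1}{2} D^2(F_{\delta_k})_{p_k+\e_k y}^2(\nabla w_k)$. Since $D^2F^2$ is $0$-homogeneous and hence bounded away from the zero section, its convolutions $D^2F_\delta^2$ are uniformly bounded in $\delta$, and by \eqref{F.square.cvx} they satisfy $D^2F_\delta^2\ge 2\lambda' g$; thus $\lambda' g\le B_k\le Cg$ uniformly in $k$. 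Extracting a weak-$*$ $L^\infty$ limit $B_\infty$, still trapped between $\lambda' g$ and $Cg$, and using the uniform convergence $W''(w_k)\to W''(c_0)$, we obtain
\[
\int_{B_\rho}\big[B_\infty[\nabla\eta,\nabla\eta]+W''(c_0)\eta^2\big]\ge 0\quad\forall\eta\in C_c^1(B_\rho).
\]
From $W'(c_0)=0$, \eqref{dw1}, and the identity $W''=2(\sqrt W)''\sqrt W+(W')^2/(2W)$, we deduce $-W''(c_0)\ge c''>0$ for a universal constant $c''$ (depending only on $W$); together with $B_\infty\le Cg$ this forces a Poincar\'e-type inequality $C\int_{B_\rho}|\nabla\eta|^2\ge c''\int_{B_\rho}\eta^2$ for all $\eta\in C_c^1(B_\rho)$, which fails for $\rho$ large by scaling of the first Dirichlet eigenvalue. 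Since the constants $c''$ and $C$ match those in the proof of Proposition~\ref{zero.unstable}, the $\rho$ provided there suffices to deliver the contradiction.

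The main technical obstacle is the passage to the limit of the stability inequality when $\delta_k\to 0$, as $F^2$ is not $C^2$ at the zero section and the limiting Hessian has no intrinsic meaning. The resolution is the uniform-in-$\delta$ boundedness and ellipticity of $D^2F_\delta^2$ (from the $0$-homogeneity of $D^2F^2$ away from zero), which makes the matrix field $B_k$ weak-$*$ precompact in $L^\infty$ while preserving both the upper and lower ellipticity bounds in the limit; this turns out to be all that is required.
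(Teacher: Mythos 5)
Your argument is correct and follows essentially the same route as the paper: contradiction, rescaling to unit scale, $C^{1,\alpha}$ compactness forcing the limit to be a constant $c_0$ with $|c_0|\le 1-\gamma$ and $W'(c_0)=0$, and then passing stability to the limit to contradict the Poincar\'e-type inequality that fails on $B_\rho$ for the $\rho$ fixed in Proposition~\ref{zero.unstable}. The only (harmless) divergence is that the paper splits into the cases $\delta_\infty>0$ (where it applies Proposition~\ref{zero.unstable} directly to the limit) and $\delta_\infty=0$ (where it uses only the uniform upper ellipticity bound on $D^2(F_{\delta}^2/2)$ at finite $\e$), whereas you treat both uniformly via a weak-$*$ limit of the coefficient matrices; you also make explicit two steps the paper leaves implicit, namely the connectedness argument showing $w_\infty$ is constant on all of $B_\rho$ and the use of $W'(c_0)=0$ to guarantee $W''(c_0)<0$.
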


\begin{proof}
    Arguing by contradiction, dilating the domain by a factor $\e^{-1}$, assume that
    $$\int_{B_{\rho}^{(\epsilon)}}\sqrt{2W(\tilde u_\epsilon)}|\nabla\tilde u_\epsilon|\to0$$
    for a sequence of rescaled functions $\tilde u_\epsilon$, defined on rescaled geodesic balls $B_\rho^{(\epsilon)}$
    and stably critical for $E_{1,\delta_\e}$, for some $\delta_\e\in(0,1)$.
    Since $\e\to0$, the rescaled metrics converge to the Euclidean one.
    
    Since $u_\e$ is uniformly $C^{1,\alpha}$ on $B_{\rho'}^{(\e)}$, for any $\rho'\in(0,\rho)$ (see Proposition \ref{regularity} and its proof), once we identify each $B_{\rho}^{(\epsilon)}$ with the Euclidean ball $B_\rho(0)$, these solutions 
    converge in $C^1_{loc}$, along a subsequence, to a critical point $\tilde u_0:B_\rho(0)\to[-1,1]$
    for $E_{1,\delta_0}$, where $\delta_0:=\lim_{\e\to0}\delta_\e\in[0,1]$. Here the limit energy $E_{1,\delta_0}$ involves the autonomous integrand $F_{p_0}$, where $p_0=\lim_{\e\to0}p_\e$.
    	Also, $|\tilde u_0(0)|=\lim_{\e\to0}|u_\e(p_\e)|\le 1-\gamma$ and
    $$\int_{B_{\rho}(0)}\sqrt{2W(\tilde u_0)}|\nabla\tilde u_0|=0,$$
    so that $\nabla\tilde u_0$ vanishes on the open set $\{\tilde u_0\in(-1,1)\}$.
    We deduce that $\tilde u_0$ is a constant value in $[-1+\gamma,1-\gamma]$.
    If $\delta_0>0$, then $\tilde u_0$ is also stable and we can immediately apply Proposition \ref{zero.unstable}
        to reach a contradiction.
    
    If $\delta_0=0$, we can still derive an inequality like \eqref{const.stab}: namely, for $\e$ small enough, depending on $\operatorname{spt}(\eta)$, we can write
    $$c\int_{B_\rho^{(\e)}}\eta^2\le C\int_{B_\rho^{(\e)}}|\nabla\eta|^2,$$
    by exploiting the stability of $\tilde u_\e$ and using the fact that $W''(\tilde u_\e)\to W''(\tilde u_0)\le-c<0$
    locally uniformly.
    In the limit we get $c\int_{B_\rho}\eta^2\le C\int_{B_\rho}|\nabla\eta|^2$,
    for all $\eta\in C^1_c(B_\rho)$, which is again impossible by our choice of $\rho$ in the proof of Proposition \ref{zero.unstable}, making \eqref{const.stab} fail.
\end{proof}

\begin{lemma}\label{f.lb}
    There exist constants $c>0$ and $\epsilon_0>0$, depending on $(M^n,g,F)$, such that
    $$\int_{B_{2r}(p)}\epsilon F_\delta(\nabla u)^2\ge c\int_{B_r(p)}\frac{W(u)}{\epsilon}$$
    for any stable critical point $u:B_{2r}(p)\to\R$ of $E_{\e,\delta}$, provided that
    $\rho\epsilon<\min\{r,\epsilon_0\}$.
\end{lemma}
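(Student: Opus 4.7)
The plan is to upgrade Lemma~\ref{jac.lb} to a lower bound directly on $\int \varepsilon F_\delta^2$, apply a Vitali covering argument, and then control the bulk region where $|u|$ is close to $\pm 1$ via exponential decay of the linearized equation.

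First, I would combine Cauchy--Schwarz with Lemma~\ref{jac.lb}: from $\int\sqrt{2W(u)}|\nabla u|\le(\lambda')^{-1}(\int 2W)^{1/2}(\int F_\delta(\nabla u)^2)^{1/2}$ (using \eqref{eq:l'}) and the trivial bound $\int_{B_{\rho\varepsilon}(p)}W(u)\le W_{\max}|B_{\rho\varepsilon}|\le C\varepsilon^n$, I deduce the refined local estimate
\[
\int_{B_{\rho\varepsilon}(p)}\varepsilon F_\delta(\nabla u)^2\ge c_\gamma\,\varepsilon^{n-1}\qquad\text{whenever $|u(p)|\le 1-\gamma$,}
\]
for any stable critical point $u$ of $E_{\varepsilon,\delta}$ on $B_{\rho\varepsilon}(p)$. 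Next, fixing a small $\gamma>0$ and setting $A:=\{|u|\le 1-\gamma\}\cap B_r(p)$, a Vitali $5r$-covering of $A$ with disjoint balls $B_{\rho\varepsilon}(x_i)\subset B_{2r}(p)$ (using $\rho\varepsilon<r$) combined with the refined estimate yields $|A|/\varepsilon\le C_\gamma\int_{B_{2r}}\varepsilon F_\delta^2$, and therefore $\int_A W/\varepsilon \le W_{\max}|A|/\varepsilon \le C_\gamma'\int_{B_{2r}}\varepsilon F_\delta^2$.

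The main obstacle is the bulk contribution $\int_{B_r\setminus A}W/\varepsilon$, where $|u|>1-\gamma$. On each connected component $U$ of $\{u>1-\gamma\}$ (and analogously for $\{u<-(1-\gamma)\}$), writing $v:=1-|u|$, the function satisfies to leading order $\mathrm{div}(M\nabla v)=\varepsilon^{-2}W''(\pm 1)v+O(v^2)$ with $M=D^2(F_\delta^2/2)(0)$ uniformly positive definite thanks to \eqref{eq:l'}. Since $W''(\pm 1)>0$, a comparison with exponential barriers yields $v(x)\le C\gamma\exp(-c\,\mathrm{dist}(x,\partial U)/\varepsilon)$ uniformly in $\delta$; combined with $W(u)\le Cv^2$ near $\pm 1$ and the coarea formula, this gives $\int_{U\cap B_r}W/\varepsilon\le C\,\mathcal H^{n-1}(\partial U\cap B_{3r/2})$. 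Averaging the coarea formula over levels $\gamma'\in[\gamma,2\gamma]$ to select a good level with $\mathcal H^{n-1}(\{|u|=1-\gamma'\}\cap B_{2r})\le (2/\gamma)\int_{B_{2r}}|\nabla u|$, followed by Cauchy--Schwarz, produces
\[
\int_{B_r\setminus A}W/\varepsilon\le C_\gamma''\sqrt{\int_{B_r}W/\varepsilon}\sqrt{\int_{B_{2r}}\varepsilon F_\delta^2}.
\]

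Finally, setting $X:=\int_{B_r}W/\varepsilon$ and $Y:=\int_{B_{2r}}\varepsilon F_\delta^2$, the combined bound $X\le C_\gamma'Y+C_\gamma''\sqrt{XY}$ together with Young's inequality $C_\gamma''\sqrt{XY}\le X/2+(C_\gamma'')^2 Y/2$ yields $X\le CY$, as desired. The hard part is justifying the exponential decay in the bulk with constants uniform in $\delta$ as $\delta\to 0$; this is feasible thanks to \eqref{eq:l'} and the strict positivity of $W''(\pm 1)$, and the $C^{1,\alpha}$ regularity of $u$ is enough since the relevant pointwise estimates take place in the interior of $U$, away from $\{\nabla u=0\}$ where $u$ is smooth by Proposition~\ref{regularity}.
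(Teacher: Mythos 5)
Your first two steps are sound: the upgrade of Lemma~\ref{jac.lb} via Cauchy--Schwarz and the trivial bound $\int_{B_{\rho\epsilon}}W\le C\epsilon^n$ does give $\int_{B_{\rho\epsilon}(x)}\epsilon F_\delta(\nabla u)^2\ge c_\gamma\epsilon^{n-1}$ at points with $|u(x)|\le1-\gamma$, and the Vitali argument then correctly controls $\int_{\{|u|\le1-\gamma\}\cap B_r}W/\epsilon$ by $C_\gamma\int_{B_{2r}}\epsilon F_\delta^2$. The genuine gap is in the bulk estimate. Your chain ``pointwise decay $v(x)\le C\gamma e^{-c\,\mathrm{dist}(x,\partial U)/\epsilon}$, hence $\int_{U\cap B_r}W/\epsilon\le C\,\mathcal H^{n-1}(\partial U\cap B_{3r/2})$'' requires the volume bound $|\{x\in U:\mathrm{dist}(x,\partial U)<s\}|\le Cs\,\mathcal H^{n-1}(\partial U)$ for $s\sim k\epsilon$, i.e.\ a bound on the \emph{Minkowski content} of $\partial U$, not on its Hausdorff measure. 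These are not comparable in general: $\partial U\subseteq\{|u|=1-\gamma'\}$ is a level set of a function that is only $C^{1,\alpha}$ (and Sard-type regularity of a.e.\ level is unavailable at this smoothness), so even after selecting a ``good level'' by the coarea formula you only control $\mathcal H^{n-1}(\{|u|=1-\gamma'\})$, which says nothing about the measure of inner tubular neighborhoods where $\nabla u$ degenerates. This step would fail as written. Two smaller issues: the displayed linearization $\operatorname{div}(M\nabla v)=\epsilon^{-2}W''(\pm1)v+O(v^2)$ with $M=D^2(F_\delta^2/2)(0)$ is not justified, since $\nabla u$ need not be small on $\{|u|>1-\gamma\}$ (hidden layers); the barrier argument can be repaired by working directly with the monotone divergence-form operator via \eqref{mono}, but not by freezing coefficients at the origin. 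Also, your final self-improving inequality mixes $X=\int_{B_r}W/\epsilon$ on the left with quantities on $B_{2r}$ on the right, so closing it needs an absorption over intermediate radii.

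For comparison, the paper avoids all of this by arguing at the scale $\rho\epsilon$ directly: assuming the inequality fails with constants $c_\epsilon\to0$, a pigeonhole over a bounded-overlap cover produces balls $B_{2\rho\epsilon}(x_\epsilon)$ on which $\int\epsilon F^2<c_\epsilon\int W/\epsilon$; rescaling to unit size, compactness and stability (Proposition~\ref{zero.unstable}, or its $\delta\to0$ variant) force the limit to be the constant $\pm1$; and then testing the Euler--Lagrange equation with $\phi^2(1-\tilde u_\epsilon)$ yields a Caccioppoli-type inequality $\int\phi^2W(\tilde u_\epsilon)\le C\int F(\nabla\tilde u_\epsilon)^2$ once $|\tilde u_\epsilon|\ge1-\gamma$, contradicting the assumed degeneration. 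That route replaces your pointwise exponential decay and level-set geometry with a single integral (Caccioppoli) estimate, which is exactly what sidesteps the Minkowski-content obstruction; if you want to keep a direct, non-contradiction proof, you should replace your bulk step by this Caccioppoli inequality applied on the balls $B_{\rho\epsilon}(x)\subseteq B_{2r}(p)$ centered at points where $|u|>1-\gamma$.
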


\begin{proof}
    Arguing by contradiction, assume that a stable solution $u_\e$ satisfies the reverse inequality for arbitrarily small $c,\epsilon>0$. Since we can cover $B_r(p)$ by a family of balls $B_{\rho\epsilon}(x)\subseteq B_{2r}(p)$ with bounded overlap, we can find a sequence of stable solutions
    $u_{\epsilon}$ (with $\epsilon\to0$) and points $x_\epsilon\in M$ such that
    \begin{equation}\label{contrad1}
    \int_{B_{2\rho\epsilon}(x_\epsilon)}\epsilon F(\nabla u_\epsilon)^2<c_\epsilon\int_{B_{\rho\epsilon}(x_\epsilon)}\frac{W(u_\epsilon)}{\epsilon},
    \end{equation}
    with $c_\epsilon\to0$.

    We can rescale $u_\epsilon$ to functions $\tilde u_\epsilon$, defined on rescaled geodesic balls $B_{2\rho}^{(\epsilon)}$ converging to the Euclidean ball $B_{2\rho}(0)$.
    As in the previous proof, we have $\tilde u_\e\to\tilde u_0$ in $C^1_{loc}$ up to a subsequence.
    Since
    $$\int_{B_{2\rho}^{(\epsilon)}}|\nabla\tilde u_\epsilon|^2\le C\int_{B_{2\rho}^{(\epsilon)}}F(\nabla \tilde u_\epsilon)^2<C\cdot c_\epsilon\int_{B_\rho^{(\epsilon)}}W(\tilde u_\epsilon)\to0,$$
    in the limit we deduce that $\tilde u_0$ is constant
    on $B_{2\rho}(0)$.
    
    If $u_\e$ is critical and stable for $E_{\e,\delta_\e}$ with $\delta_0:=\lim_{\e\to0}\delta_\e>0$,
    then $\tilde u_0$ is critical and stable for $E_{1,\delta_0}$, so that Proposition
    \ref{zero.unstable} gives $\tilde u_0\in\{\pm1\}$; if instead $\delta_0=0$,
    we can reach the same conclusion by arguing exactly as in the previous proof.
    
    Now, recalling that $W''(\pm1)>0$, we can fix $\gamma\in(0,1)$ such that
    $$-\operatorname{sgn}(s)W'(s)(1-|s|)\ge 4c(1-|s|)^2\ge(1-s^2)^2\text{ for }|s|\in[1-\gamma,1],$$
    while, for a possibly different $c>0$, we also have
    $$c(1-s^2)^2\le W(s)\le C(1-s^2)^2\quad\text{for }|s|\le1.$$
    Since $|\tilde u_0|=1$, we have $|\tilde u_\epsilon|\in[1-\gamma,1]$ on $B_{3\rho/2}^{(\epsilon)}$ eventually.

    Let us take a cut-off function $\phi_\epsilon\in C^1_c(B_{3\rho/2}^{(\epsilon)})$, equal to $1$ on $B_{\rho}^{(\epsilon)}$ and with $|\nabla\phi_\e|\le4\rho^{-1}$.
    Assuming for instance that $\tilde u_\epsilon\ge 1-\gamma$ on the ball $B_{3\rho/2}^{(\epsilon)}$ and
    testing the Euler--Lagrange equation \eqref{el.div} with $\phi_\epsilon^2(1-\tilde u_\epsilon)$, we find
    $$-\int_{B_{2\rho}^{(\epsilon)}}\phi_\epsilon^2 W'(\tilde u_\epsilon)(1-\tilde u_\epsilon)=\int_{B_{2\rho}^{(\epsilon)}}\langle a_\e(\nabla\tilde u_\e), \nabla(\phi_\epsilon^2(1-\tilde u_\epsilon))\rangle,$$
    for suitable $a_\e:TM\to TM$ with $|a_\e(v)|\le C|v|$.
    Applying Young's inequality, we deduce that
    $$-\int_{B_{2\rho}^{(\epsilon)}}\phi_\epsilon^2 W'(\tilde u_\epsilon)(1-\tilde u_\epsilon)\le C(\sigma)\int_{B_{2\rho}^{(\epsilon)}}|\nabla \tilde u_\epsilon|^2+\sigma\int_{B_{2\rho}^{(\epsilon)}}\phi_\epsilon^2 (1-\tilde u_\epsilon)^2$$
    for an arbitrarily small $\sigma>0$. Using the fact that $\tilde u_\e\in[1-\gamma,1]$ on the support of $\varphi_\e$,
    we deduce that
    $$c\int_{B_{2\rho}^{(\epsilon)}}\phi_\epsilon^2 (1-\tilde u_\epsilon^2)^2
    \le C(\sigma)\int_{B_{2\rho}^{(\epsilon)}}|\nabla \tilde u_\epsilon|^2+\sigma\int_{B_{2\rho}^{(\epsilon)}}\phi_\epsilon^2 (1-\tilde u_\epsilon)^2,$$
    and thus, taking $\sigma:=c/2$, we get
    $$\int_{B_{2\rho}^{(\epsilon)}}\phi_\epsilon^2 (1-\tilde u_\epsilon^2)^2
        \le C\int_{B_{2\rho}^{(\epsilon)}}|\nabla \tilde u_\epsilon|^2,$$
        or alternatively
    $$\int_{B_{2\rho}^{(\epsilon)}}\phi_\epsilon^2 W(\tilde u_\epsilon)^2
            \le C\int_{B_{2\rho}^{(\epsilon)}}F(\nabla \tilde u_\epsilon)^2.$$
    Thus, we have
    $$\int_{B_{\rho}^{(\epsilon)}}W(\tilde u_\epsilon)^2
                \le C\int_{B_{2\rho}^{(\epsilon)}}F(\nabla \tilde u_\epsilon)^2,$$
    which contradicts \eqref{contrad1}. The case $\tilde u_\e\in[-1,-1+\gamma]$ is analogous.
\end{proof}

We record here a diffuse version of the \emph{stability inequality} for $\mathcal{F}$-stationary hypersurfaces,
similar to the one obtained for the isotropic Allen--Cahn \cite{PadillaTonegawa1998}, first for stable critical points
of $E_{\e,\delta}$.
We will then let $\delta\to0$ to derive a consequence for critical points $u_\e$ of $E_\e$ which are limits $u_\e=\lim_{\delta\to0}u_{\e,\delta}$
of stable critical points of $E_{\e,\delta}$.

\begin{theorem}\label{stab.ineq}
Assume that $u$ is a stable critical point for $E_{\e,\delta}$ on an open set $U\subseteq M$.
Then we have
$$\int_U\varphi^2|\operatorname{II}_{u}|^2\cdot \e F_\delta(\nabla u)^2\le C(\varphi)\int_U e_{\e,\delta}(u),$$
where $\operatorname{II}_{u}(x)$ denotes the second fundamental form of the level set $\{u=u(x)\}$ if $\nabla u(x)\neq 0$, and it is set to be zero on $\{\nabla u=0\}$.
\end{theorem}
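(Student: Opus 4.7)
The plan is to adapt the classical Padilla--Tonegawa derivation of the diffuse stability inequality from the isotropic case: I test the second variation $D^2E_{\e,\delta}(u)[\eta,\eta]\ge0$ with $\eta=\varphi h$, where $h:=F_\delta(\nabla u)$, and compare the resulting bound with a Bochner-type identity derived by differentiating the Euler--Lagrange equation. Since $F_\delta^2$ is smooth, elliptic regularity (see Proposition~\ref{regularity}) gives that $u$ is smooth on $U$, so $h$ is smooth on $\{\nabla u\neq 0\}$ and Lipschitz globally, hence usable as a test function. As $|\mathrm{II}_u|$ is set to $0$ on $\{\nabla u=0\}$, it is enough to prove the estimate with $\varphi$ compactly supported in $\{\nabla u\neq 0\}$, the remainder being handled by a standard capacity argument.

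Expanding $\nabla(\varphi h)=h\nabla\varphi+\varphi\nabla h$ inside the quadratic form $D^2(F_\delta^2)(\nabla u)$ (which is uniformly bounded by its $0$-homogeneity in the second variable) and applying a Young inequality adapted to it, the stability inequality becomes, for any small $\sigma>0$,
\begin{equation*}
(1+\sigma)\int_U\e\,\varphi^2\,D^2(F_\delta^2)(\nabla u)[\nabla h,\nabla h]+\int_U\e^{-1}W''(u)\varphi^2 h^2\ge -C(\sigma,\varphi)\int_U e_{\e,\delta}(u).
\end{equation*}
In parallel, differentiating the Euler--Lagrange equation $\operatorname{div}(D(F_\delta^2/2)(\nabla u))=\e^{-2}W'(u)$ along $\de_k$, multiplying by $\de_k u$, summing over $k$ and integrating against $\varphi^2$ yields, after one integration by parts, a Bochner-type identity of the form
\begin{equation*}
\int_U\e\,\varphi^2\operatorname{tr}\!\big(D^2(F_\delta^2/2)(\nabla u)(D^2u)^2\big)+\int_U\e^{-1}W''(u)\varphi^2 h^2=\text{cross terms in }\nabla\varphi+\text{spatial errors},
\end{equation*}
where the spatial errors arise from $D_xF_\delta$ and both types of error terms are bounded by $C(\varphi)\int_U e_{\e,\delta}(u)$ via \eqref{bound} and the Modica-type bound of Theorem~\ref{thm:modicabound}.

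Combining these two to eliminate the $W''(u)$ contribution, the theorem reduces to the pointwise anisotropic Kato-type inequality
\begin{equation*}
\operatorname{tr}\!\big(D^2(F_\delta^2/2)(\nabla u)(D^2u)^2\big)-(1+\sigma)D^2(F_\delta^2/2)(\nabla u)[\nabla h,\nabla h]\ge c(\sigma,\lambda')\,|\mathrm{II}_u|^2\,F_\delta(\nabla u)^2.
\end{equation*}
This follows from three ingredients: Euler's identity $D^2(F_\delta^2/2)(\nabla u)[\nabla u,\cdot]=D(F_\delta^2/2)(\nabla u)$, which implies $\nabla h$ encodes, modulo spatial errors, only the $\nabla u$-column of $D^2u$; the uniform convexity~\eqref{F.square.cvx}, giving $D^2(F_\delta^2)\ge 2\lambda' g$ transversally to $\nabla u$; and the observation that in an orthonormal frame adapted to the level set $\{u=u(x)\}$ the tangent--tangent block of $D^2u$ equals $|\nabla u|\cdot\mathrm{II}_u$, combined with the comparison $F_\delta(\nabla u)\le\lambda'^{-1}|\nabla u|$ from~\eqref{bound}. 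The hardest step is the Bochner-type identity above: differentiating the anisotropic Euler--Lagrange equation produces, in addition to the leading term, numerous contributions involving $D^2_{vv}F_\delta^2$, $D^2_{xv}F_\delta^2$ and $D^2_{xx}F_\delta^2$ (playing the role of Ricci curvature in the isotropic Bochner formula), which must be carefully organized so that the positive-definite quadratic form in $D^2u$ is correctly isolated while all remaining errors end up of size $O(e_{\e,\delta})$.
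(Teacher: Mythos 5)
Your proof follows the same skeleton as the paper's (stability tested against a gradient-type multiplier, a differentiated Euler--Lagrange/Bochner identity, and a pointwise algebraic inequality isolating the tangential Hessian), but your choice of multiplier $h=F_\delta(\nabla u)$ in place of $|\nabla u|$ breaks the cancellation on which the whole argument rests. The Bochner identity you describe --- differentiate the Euler--Lagrange equation along $\de_k$, multiply by $\de_k u$, sum over $k$ --- produces the zeroth-order term $\e^{-2}W''(u)\varphi^2|\nabla u|^2$ (since $\de_k(W'(u))\,\de_k u=W''(u)|\nabla u|^2$), \emph{not} $\e^{-2}W''(u)\varphi^2 F_\delta(\nabla u)^2$ as you assert. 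The stability inequality tested with $\varphi h$ carries $\e^{-2}W''(u)\varphi^2F_\delta(\nabla u)^2$, so the two $W''$ contributions do not cancel. Their difference, $\e^{-2}W''(u)\bigl(F_\delta(\nabla u)^2-|\nabla u|^2\bigr)$, is of size $\e^{-2}|\nabla u|^2\sim\e^{-4}$ in the transition layer, whereas $\e^{-1}e_{\e,\delta}(u)\sim\e^{-2}$ there; and since $W''$ changes sign, \eqref{bound} gives no one-sided comparison between the two quadratic weights. This term cannot be absorbed into $C(\varphi)\int_U e_{\e,\delta}(u)$, so the reduction to your pointwise inequality does not go through.

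The pointwise step is also flawed for the same reason. Euler's identity gives $A\nabla u=D(F_\delta^2/2)(\nabla u)=:a$ with $A:=D^2(F_\delta^2/2)(\nabla u)$, but $a$ is \emph{not} parallel to $\nabla u$ for anisotropic $F$; since $\nabla h=D^2u\cdot(a/h)$ up to spatial errors, $\nabla h$ encodes the $DF_\delta(\nabla u)$-column of $D^2u$, not the $\nabla u$-column as you claim. Consequently $A[\nabla h,\nabla h]=\operatorname{tr}\bigl(h^{-2}(a\otimes a)\,HAH\bigr)$ with $H=D^2u$, and the difference with the Bochner leading term $\operatorname{tr}(gHAH)$ is $\operatorname{tr}\bigl((g-h^{-2}a\otimes a)HAH\bigr)$; this is nonnegative only if $|DF_\delta(\nabla u)|\le1$, which fails for general anisotropies, and even when nonnegative it does not isolate the tangential block of $H$ computing $|\operatorname{II}_u|^2$, because the near-kernel of $g-h^{-2}a\otimes a$ is along $DF_\delta(\nabla u)$ rather than along $\nu$. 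Taking $h=|\nabla u|$ fixes both problems at once: then $\nabla h=H\nu$, the $W''$ terms match exactly, and the difference becomes $\operatorname{tr}\bigl((g-\nu^*\otimes\nu^*)HAH\bigr)\ge\lambda'\operatorname{tr}\bigl((g-\nu^*\otimes\nu^*)H^2\bigr)\ge\lambda'|\nabla u|^2|\operatorname{II}_u|^2$. Two further points: your $(1+\sigma)$ from Young's inequality leaves an uncontrolled $-\sigma|H(\nu,\nu)|^2$-type contribution, and the $O(\nabla u)O(D^2u)$ spatial errors must likewise be absorbed; both require using the Euler--Lagrange equation to bound $\de_{nn}u$ by the remaining Hessian entries plus lower-order terms (as in \eqref{nn}), a step you omit. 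Finally, no capacity argument near $\{\nabla u=0\}$ is needed or available with uniform constants: one works globally with the Lipschitz test function and uses that $D^2u=0$ a.e.\ on $\{\nabla u=0\}$.
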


\begin{proof}
Since $u_\delta$ is stable on $U$, for any $\phi\in C^2_c(U)$
a straightforward computation shows that
$$\int_U \left[A^{ij}_\delta\de_i\phi\de_j\phi+\frac{W''(u_\delta)}{\epsilon^2}\phi^2\right]\ge0,$$
where as above $A^{ij}_\delta=g^{i\ell}g^{jm}\de_{\ell m}(F_\delta^2/2)(\nabla u)$, and $\de_{\ell m}(F_\delta^2/2)$ denotes the second derivative of $(F_\delta^2/2)|_{T_xM}$ along $\de_\ell,\de_m\in T_xM$, at any given $x\in M$.

Since $|\nabla u|$ is Lipschitz, by a standard approximation argument we can plug $\phi|du_\delta|$ in place of $\phi$,
obtaining
\begin{align}\label{stab.one}\int_U \left[\phi^2\left(A^{ij}_\delta\de_i|\nabla u|\de_j|\nabla u|+\frac{W''(u)}{\epsilon^2}|\nabla u|^2\right)
+A^{ij}_\delta\de_i\phi\de_j\phi |\nabla u|^2
+2A^{ij}_\delta\phi|\nabla u|\de_i\phi\de_j|\nabla u|\right]\ge0.\end{align}
Next, testing criticality with $\phi^2\Delta u$, we have
$$\int_U \left[g^{ij}\de_i (F_\delta^2/2)(\nabla u)\de_j(\phi^2\Delta u)+\frac{W'(u)}{\epsilon^2}\phi^2\Delta u\right]=0.$$
Writing $\de_j(\phi^2\Delta u)=\phi^2 g^{k\ell}\de_{kj\ell}u%+2\phi\de_j\phi\Delta u
+\varphi[O(\nabla u)+O(D^2u)]$, with implied constants depending on $\varphi$, and noting that
$$\de_k[\de_i (F_\delta^2/2)(\nabla u)]
=g^{pq}\de_{ip}(F_\delta^2/2)(\nabla u)\de_{qk}u,$$
after an integration by parts and relabeling of indices we obtain
\begin{align*}
&\int_U \left[\phi^2 A^{ij}_\delta g^{k\ell}\de_{ik}u \de_{j\ell}u
%-\de_k\phi^2\de_i(F^2)(du)\de_{ik}u
%+2\de_i(F^2)(du)\phi\de_i\phi\Delta u\\
+\phi^2\frac{W''(u)}{\epsilon^2}|\nabla u|^2
+\e^{-2}\ang{\nabla(\phi^2),\nabla(W(u))}\right]\\
&=\int_U \varphi [O(|\nabla u|^2)+O(\nabla u)O(D^2u)],
\end{align*}
and hence
\begin{align}\label{stab.two}
\int_U\left[ \phi^2 A^{ij}_\delta g^{k\ell}\de_{ik}u \de_{j\ell}u
%-\de_k\phi^2\de_i(F^2)(du)\de_{ik}u
%+2\de_i(F^2)(du)\phi\de_i\phi\Delta u\\
+\phi^2\frac{W''(u)}{\epsilon^2}|\nabla u|^2\right]
&=\int_U [\e^{-1}\cdot O(e_\e(u))+\varphi\cdot O(\nabla u)O(D^2u)].
\end{align}

%The second term in the last integral can be rewritten as
%$$-\ang{d\phi^2,d(F^2(du))},$$
%while the third term can be integrated by parts to become
%$$\int-2A_{ij}\de_{jk}u\phi\de_i\phi\de_ku-2\de_i(F^2)(du)\de_k(\phi\de_i\phi)\de_ku
%=\int-2A_{ij}|du|\de_j|du|\phi\de_i\phi-\de_i(F^2)(du)\de_{ik}\phi^2\de_ku.$$

Subtracting \eqref{stab.one} from \eqref{stab.two}, we get
$$\int_U \phi^2 A^{ij}_\delta[g^{k\ell}\de_{ik}u \de_{j\ell}u-\de_i|\nabla u|\de_j|\nabla u|]\le C(\phi)\int_U [\e^{-1} e_\e(u)+\varphi|\nabla u||D^2u|].$$
We now absorb the last error term: fixing a point $p\in M$ where $\nabla u(p)\neq0$ and choosing a chart centered at $p$,
with $g_{ij}(p)=\delta_{ij}$, $\de_k g_{ij}(p)=0$, and $\nabla u(p)=|\nabla u(p)|\de_n$, as in \eqref{nn} we observe that
$$|\de_{nn}u(p)|\le C\sum_{(i,j)\neq(n,n)}|\de_{ij}u|,$$
while the integrand in the left-hand side is equal to $\phi^2$ times
$$A^{ij}_\delta[\de_{ik}u \de_{jk}u-\de_{in}u\de_{jn}u]\ge\lambda'\sum_{i=1}^n\sum_{k=1}^{n-1}|\de_{ik}u|^2.$$
On the other hand, the same term equals $A^{ij}_\delta\de_{ik}u \de_{jk}u\ge\lambda'\sum_{i,k=1}^n|\de_{ik}u|^2$
at a.e.\ point where $|\nabla u|=0$.
Thus, by Cauchy's inequality, we reach the bound
\begin{equation}\label{bdsub}\int \phi^2 A^{ij}_\delta[g^{k\ell}\de_{ik}u \de_{j\ell}u-\de_i|\nabla u|\de_j|\nabla u|]\le C(\phi)\int_U \e^{-1} e_\e(u),\end{equation}
for a possibly larger $C(\phi)$.

Let $A_\delta$ denote the section of $TM\otimes TM$ with components $A^{ij}_{\delta}$ (in a coordinate chart),
which (using the metric $g$) we can recast as a positive definite $(1,1)$-tensor $A_\delta\ge\lambda'I$.
On $\{\nabla u\neq0\}$, letting $\nu:=\frac{\nabla u}{|\nabla u|}$, we observe that the integrand on the left-hand side can be written more compactly as
$$\phi^2 \operatorname{tr}((HA_\delta H)(g-\nu^*\otimes\nu^*))\ge\lambda'\phi^2\operatorname{tr}(H^2(g-\nu^*\otimes\nu^*)),\quad H:=D^2u.$$
Given a point $p$ and selecting a coordinate chart as above, writing $e_i:=\de_i$, for $i=1,\dots,n-1$ we have
$$|\operatorname{II}_u(e_i)|^2=\sum_{j=1}^{n-1}|\ang{\de_i\nu,e_j}|^2=|\de_i\nu|^2$$
at $p$, as the tangent space of the level set is spanned by $\{e_1,\dots,e_{n-1}\}$, while $e_n=\nu$. Moreover
$$\de_i\nu=\frac{H(e_i)-H(e_i,\nu)\nu}{|\nabla u|},$$
so that for all $j=1,\dots,n-1$ we have
$$\ang{\de_i\nu,e_j}=\frac{H(e_i,e_j)}{|\nabla u|}.$$
Thus,
$$|\operatorname{II}_u|^2=\sum_{i,j=1}^{n-1}\left|\frac{H(e_i,e_j)}{|\nabla u|}\right|^2
\le\frac{\operatorname{tr}(H^2(g-\nu^*\otimes\nu^*))}{|\nabla u|^2}.$$
We deduce that
$$\int_U\phi^2|\operatorname{II}_u|^2|\nabla u|^2\le C\phi^2 \operatorname{tr}((HA_\delta H)(g-\nu^*\otimes\nu^*)).$$
The claim now follows from \eqref{bdsub}.
\end{proof}

%Next, we note that $\frac{|\hat H\iota_P|^2}{|\nabla u|^2}$ upper bounds the square of the second fundamental form of the level set.

Now, given a critical point $u_\e$ for the energy $E_\e$, we define the $(n-1)$-dimensional varifold
$\tilde V_\e$ to be the measure on the Grassmannian bundle $G:=G_{n-1}(M)$ given by
\begin{equation}\label{def:average}
    \tilde V_\epsilon(f):=\int_{\{\nabla u_\e\neq0\}}\sqrt{2W(u_\e)}|\nabla u_\e|\cdot f(P_{u_\e})\,d\operatorname{vol}_g,
\end{equation}
for any $f\in C^0(G)$, where $P_{u_\e}(x):=(\nabla u_\e(x))^\perp\in G$ is the tangent plane to the level set $\{u_\e=u_\e(x)\}$
at $x$. Here we make a slight abuse of notation: we write $f(P_{u_\e}(x))$ in place of $f(x,P_{u_\e}(x))$.

\begin{remark}\label{avg.lev}
If almost all level sets $\{u_\e=\lambda\}$ are regular then, viewing them as the $(n-1)$-varifolds $\llbracket\{u_\e=\lambda\}\rrbracket$,
we have
$$\tilde V_\e=\int_{-1}^1\sqrt{2W(\lambda)}\cdot \llbracket\{u_\e=\lambda\}\rrbracket\,d\lambda,$$
by the coarea formula. In other words, in this case $\tilde V_\e$ is simply a weighted average of the level sets of $u_\e$.
\end{remark}

Further, given a sequence of critical points $(u_\e)$ with $\e\to0$ and
\begin{equation}\label{e.bded}
\liminf_{\e\to 0}E_\e(u_\e)<\infty,
\end{equation}
we can assume that the $\liminf$ is a limit (up to a subsequence) and define the measure
$$ d\mu=\lim_{\e\to0}e_\e(u_\e)\,d\operatorname{vol}_g,$$
namely $\mu$ is the limit of the energy densities in duality with $C^0(M)$, up to a subsequence.

\begin{corollary}\label{firstvar.bd.cor}
	Assuming \eqref{e.bded}, up to a subsequence we can extract a limit varifold $\tilde V_0$, with weight
	$$\|\tilde V_0\|\le C\mu.$$
    If moreover each $u_\e=\lim_{\delta\to0}u_{\e,\delta}$, for a suitable sequence
    $(u_{\e,\delta})_{\delta}$ of critical points for $E_{\e,\delta}$  with Morse index $\le m$ independently of $\e,\delta$,
    then there exists a finite set of points $\mathcal{S}$, with $\#\mathcal{S}\le m$, such that any
    $p\not\in\mathcal{S}$ admits a neighborhood $U$ where the \emph{isotropic} first variation $\delta\tilde V_0$ satisfies
    $$|\delta\tilde V_0|(U)\le C(U).$$
\end{corollary}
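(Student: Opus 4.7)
The plan is to prove the two assertions in order. For the first, an AM--GM bound gives the pointwise inequality
$$\sqrt{2W(u_\e)}\,|\nabla u_\e| \le \lambda^{-1}\sqrt{2W(u_\e)}\,F(\nabla u_\e) \le \lambda^{-1}\Bigl(\tfrac{\e}{2}F(\nabla u_\e)^2 + \tfrac{1}{\e}W(u_\e)\Bigr) = \lambda^{-1} e_\e(u_\e),$$
so $\|\tilde V_\e\| \le \lambda^{-1} e_\e(u_\e)\,d\operatorname{vol}_g$. Since the right-hand sides have mass uniformly bounded in $\e$ by \eqref{e.bded}, the varifolds $\tilde V_\e$ are equibounded; a subsequence converges to some $\tilde V_0\in\mathcal{V}(M)$, and passing the same pointwise bound to the limit yields $\|\tilde V_0\| \le \lambda^{-1}\mu$.

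For the second claim I would introduce the auxiliary varifolds $\tilde V_{\e,\delta}$ defined exactly as in \eqref{def:average} but with $u_{\e,\delta}$ in place of $u_\e$, noting that the $C^{1,\alpha}$ convergence from Proposition~\ref{regularity} gives $\tilde V_{\e,\delta}\to\tilde V_\e$ as varifolds when $\delta\to 0$, and therefore $\delta\tilde V_{\e,\delta}(X)\to\delta\tilde V_\e(X)$ for each smooth test field $X$. Call $p\in M$ \emph{bad} if for every neighborhood $U$ of $p$ and every $\eta>0$ there exist $\e,\delta<\eta$ and $\varphi\in C^\infty_c(U)$ with $D^2 E_{\e,\delta}(u_{\e,\delta})[\varphi,\varphi]<0$. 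If the set $\mathcal{S}$ of bad points contained $m+1$ distinct points $p_1,\dots,p_{m+1}$, choose pairwise disjoint open neighborhoods $U_i\ni p_i$; by a diagonal extraction one obtains a single sequence $(\e_k,\delta_k)\to 0$ and functions $\varphi_i^{(k)}\in C^\infty_c(U_i)$ with $D^2E_{\e_k,\delta_k}(u_{\e_k,\delta_k})[\varphi_i^{(k)},\varphi_i^{(k)}]<0$ for every $i=1,\dots,m+1$ simultaneously. Their pairwise disjoint supports force linear independence and produce an $(m+1)$-dimensional subspace on which the second variation is negative definite, contradicting the Morse-index bound. Hence $\#\mathcal{S}\le m$, and any $p\notin\mathcal{S}$ admits a neighborhood $U'\ni p$ on which $u_{\e,\delta}$ is stable along a suitable $(\e,\delta)$-subsequence.

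On such a $U'$ Theorem~\ref{stab.ineq} supplies, for every $\varphi\in C^2_c(U')$,
$$\int_{U'}\varphi^2\,|\operatorname{II}_{u_{\e,\delta}}|^2\,\e F_\delta(\nabla u_{\e,\delta})^2 \le C(\varphi)\,E_{\e,\delta}(u_{\e,\delta}).$$
To convert this into a bound on $\delta\tilde V_{\e,\delta}$, I would combine coarea with Sard's theorem on the smooth $u_{\e,\delta}$ and the tangential divergence theorem on its regular level sets to obtain
$$\delta\tilde V_{\e,\delta}(X) = \int_M \sqrt{2W(u_{\e,\delta})}\,\langle X,\vec H_{u_{\e,\delta}}\rangle\,|\nabla u_{\e,\delta}|\,d\operatorname{vol}_g,$$
where $\vec H_{u_{\e,\delta}}$ is the isotropic mean curvature vector of the level set through each point. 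Fixing $U\subset\subset U'$, a cutoff $\varphi\in C^2_c(U')$ with $\varphi\equiv 1$ on $U$, and $X\in C^1_c(U;TM)$, Cauchy--Schwarz combined with $|\vec H|^2\le(n-1)|\operatorname{II}|^2$ and the uniform equivalence $|\nabla u|^2\le C F_\delta(\nabla u)^2$ from \eqref{eq:l'} yields
$$|\delta\tilde V_{\e,\delta}(X)|\le \|X\|_\infty\Bigl(\int_U \tfrac{2W(u_{\e,\delta})}{\e}\Bigr)^{1/2}\Bigl(\int_U \e|\vec H_{u_{\e,\delta}}|^2|\nabla u_{\e,\delta}|^2\Bigr)^{1/2}\le C(U)\|X\|_\infty,$$
uniformly in $\e,\delta$, the first factor being controlled by $E_{\e,\delta}(u_{\e,\delta})$ and the second by the displayed stability estimate. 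Letting $\delta\to 0$ transfers the bound to $\tilde V_\e$ and then $\e\to 0$ gives $|\delta\tilde V_0|(U)\le C(U)$. The subtlest step will be the joint diagonal extraction that reconciles the Morse-index pigeonhole with the two successive limits $\delta\to 0$ and $\e\to 0$, so that the selected subsequence is compatible with the varifold convergences $\tilde V_{\e,\delta}\to\tilde V_\e\to\tilde V_0$ and with the local stability of $u_{\e,\delta}$ on each neighborhood $U\subset\subset M\setminus\mathcal{S}$.
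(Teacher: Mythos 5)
The first claim and the derivation of the first variation bound on a region of stability are essentially the paper's argument: the paper also views $\tilde V_{\e,\delta}$ as a weighted average of level sets (Remark \ref{avg.lev}), bounds $|\delta\tilde V_{\e,\delta}|(U)$ by $C\int_U|\operatorname{II}_{u_{\e,\delta}}|\sqrt{2W(u_{\e,\delta})}|\nabla u_{\e,\delta}|$, applies Cauchy--Schwarz together with Theorem \ref{stab.ineq} and $\int_M\e^{-1}W(u_{\e,\delta})\le C$, and concludes by lower semicontinuity of the first variation as $\delta\to0$ and then $\e\to0$. That part of your proposal is correct.

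The construction of the finite set $\mathcal{S}$, however, has a genuine gap. With your definition, a point $p$ is ``bad'' if for every neighborhood $U$ and every $\eta>0$ \emph{there exist} $\e,\delta<\eta$ and a test function supported in $U$ with negative second variation. The parameters $(\e,\delta)$ witnessing badness at $p_1,\dots,p_{m+1}$ are allowed to be different for different $i$, and no diagonal extraction can force them to coincide: to contradict the Morse index bound you need $m+1$ test functions with disjoint supports that are negative for the \emph{same} function $u_{\e,\delta}$. Concretely, with $m=1$ one could have $u_{\e_k,\delta_k}$ unstable only near $p_1$ for odd $k$ and only near $p_2$ for even $k$ (with the unstable regions shrinking); then both $p_1$ and $p_2$ are bad in your sense, yet every $u_{\e_k,\delta_k}$ has index $1$, so your pigeonhole yields no contradiction and the bound $\#\mathcal{S}\le m$ fails for the set you defined. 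The statement is only true \emph{up to a subsequence}, and the subsequence must be chosen as part of the construction of $\mathcal{S}$. The paper does this as follows: for each fixed $(\e,\delta)$ and scale $r$, the Morse index bound plus Vitali's covering lemma produce at most $m$ disjoint balls $B_r(p)$ of instability such that $u_{\e,\delta}$ is stable on $B_r(q)$ for every $q$ outside the corresponding $5r$-enlargements; one then passes to subsequences in $\delta$, then $\e$, then $r$ so that these finite sets of centers converge in the Hausdorff topology, and takes $\mathcal{S}$ to be the limit. Any $p\notin\mathcal{S}$ then has a neighborhood on which $u_{\e,\delta}$ is stable for all sufficiently small parameters \emph{along the chosen subsequence}, which is exactly what is needed to run your (correct) first variation estimate and pass to the limit $\tilde V_0$ along that same subsequence.
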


The sequence $\delta=\delta_k\to0$ used in the limit $u_\e=\lim_{\delta\to0}u_{\e,\delta}$ is allowed to depend on $\e$.
In the last part of the statement, $C(U)$ denotes a finite constant which may depend on all data (in particular, on $U$).

\begin{proof}
    The total weight of $\tilde V_\epsilon$
    on an open set $U\subseteq M$ equals
    $$\|\tilde V_\epsilon\|=\int_U\sqrt{2W(u_\epsilon)}|\nabla u_\epsilon|
    \le C\int_U e_\epsilon(u_\epsilon),$$
    by Cauchy--Schwarz and the bound $|\nabla u_\e|^2\le\lambda^{-2}F(\nabla u_\e)^2$,
    showing the first claim.
    
    Now assume that $u_\e=\lim_{\delta\to0}u_{\e,\delta}$ and that each $u_{\e,\delta}$ is stable on $U$.
    Then, recalling that the convergence is in $C^{1,\alpha}(M)$ and that the integrand defining $\tilde V_\e$ contains the weight $|\nabla u|$, which vanishes on the complement
    of $\{\nabla u\neq0\}$, it is straightforward to check that in the sense of varifolds
    $$\tilde V_\e=\lim_{\delta\to0}\tilde V_{\e,\delta}.$$
     Now, since $u_{\e,\delta}$ is smooth, we can apply Remark \ref{avg.lev}
    to say that $\tilde V_{\e,\delta}$ is a weighted average of its level sets.
    By subadditivity of the first variation, we have
    \begin{align*}
    |\delta\tilde V_{\e,\delta}|(U)
    &\le\int_{-1}^1\sqrt{2W(\lambda)}\cdot|\delta\llbracket \{u_{\e,\delta}=\lambda\}\rrbracket|(U)\,d\lambda\\
    &=\int_{-1}^1\sqrt{2W(\lambda)}\int_{\{u_{\e,\delta}=\lambda\}\cap U}|H_{u_{\e,\delta}}|\,d\mathcal{H}^{n-1}\,d\lambda\\
    &\le C\int_U|\operatorname{II}_{u_{\e,\delta}}|\sqrt{2W(u_{\e,\delta})}|\nabla u_{\e,\delta}|,
    \end{align*}
    where we denoted by $H_{u_{\e,\delta}}$ the mean curvature of the level set and used again the coarea formula.
    Since trivially
    $$\int_M \e^{-1}W(u_{\e,\delta})\le E_{\e,\delta}(u_{\e,\delta})\le C,$$
    from Theorem \ref{stab.ineq} and Cauchy--Schwarz we get
    $$|\delta\tilde V_{\e,\delta}|(U)\le C(U).$$
    By lower semicontinuity of the first variation, we deduce that
    \begin{equation}\label{first.var.ve}
    |\delta\tilde V_\e|(U)\le\liminf_{\delta\to 0}|\delta\tilde V_{\e,\delta}|(U)\le C(U),
    \end{equation}
    and thus
    $$|\delta\tilde V_{0}|(U)\le\liminf_{\e\to 0}|\delta\tilde V_{\e}|(U)\le C(U).$$
    
    The conclusion is standard: for every $\e,\delta$, let $S_{\e,\delta,r}$ be the set of points $p$
    such that $u_{\e,\delta}$ is unstable on $B_r(p)$. By Vitali's covering lemma,
    there exists a subcollection $S_{\e,\delta,r}'$ such that the balls $B_r(p)$ with $p\in S_{\e,\delta,r}'$
    are disjoint, giving in particular $\# S_{\e,\delta,r}'\le m$, and such that
    $u_{\e,\delta}$ is stable on $B_r(q)$ unless $q\in\bigcup_{p\in S_{\e,\delta,r}'}B_{5r}(p)$.
    The desired set is obtained by taking a limit of $S_{\e,\delta,r}'$ in the Hausdorff topology
    as $\delta\to0$, then as $\e\to0$, and finally as $r\to0$ (along subsequences).
\end{proof}

We are now in a position to prove the following. In fact, later on we will
just use the consequence \eqref{mu.dens},
even if the rectifiability of $\tilde V_0$ will simplify the argument. Later, $\tilde V_0$ will be replaced by
a more appropriate varifold $V$, directly tied to the anisotropic first variation; the latter will be shown to be ($c_W$ times) integer rectifiable on all of $M$.

\begin{theorem}\label{rect}
	On $M\setminus\mathcal{S}$ we have
	$$c\mu\le\|\tilde V_0\|\le C\mu$$
    and the density of $\tilde V_0$ satisfies
    \begin{equation}\label{tilde.dens}\Theta^{n-1}(\|\tilde V_0\|,x):=\lim_{r\to0}\frac{\|\tilde V_0\|(B_r(x))}{r^{n-1}}\in(0,\infty)
    \end{equation}
    at $\|\tilde V_0\|$-almost every $x\in M\setminus\mathcal{S}$ (or equivalently at $\mu$-a.e.\ $x\in M\setminus\mathcal{S}$),
    and hence $\tilde V_0$ is rectifiable on $M\setminus\mathcal{S}$.
    As a consequence, $\mu$ is a rectifiable measure on $M\setminus\mathcal{S}$ and
    \begin{equation}\label{mu.dens}
    0<\Theta^{n-1}_*(\mu,x)
    \le\Theta^{n-1,*}(\mu,x)<\infty\quad\text{for $\mu$-a.e.}\ x\in M\setminus\mathcal{S},
    \end{equation}
    where $\Theta^{n-1}_*(\mu,x):=\liminf_{r\to0}\frac{\mu(B_r(x))}{r^{n-1}}$
    and $\Theta^{n-1,*}(\mu,x):=\limsup_{r\to0}\frac{\mu(B_r(x))}{r^{n-1}}$.
\end{theorem}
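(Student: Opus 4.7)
The plan is to prove, in this order: (i) the two-sided comparison $c\mu\le\|\tilde V_0\|\le C\mu$ on $M\setminus\mathcal{S}$; (ii) the positivity of the upper $(n-1)$-density of $\|\tilde V_0\|$; (iii) rectifiability of $\tilde V_0$ via Allard's theorem; and (iv) transfer of all conclusions to $\mu$. The local first variation bound from Corollary~\ref{firstvar.bd.cor} will be used throughout.

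For (i), the upper bound $\|\tilde V_0\|\le C\mu$ is already in Corollary~\ref{firstvar.bd.cor}. For the lower bound, set $X_\e:=\frac{\e}{2}F(\nabla u_\e)^2$ and $Y_\e:=\frac{W(u_\e)}{\e}$, so that $e_\e=X_\e+Y_\e$ and $\sqrt{2W}F(\nabla u_\e)=2\sqrt{X_\e Y_\e}$. Modica's bound \eqref{eq:modicabound} gives $X_\e\le Y_\e+C\e$, while Lemma~\ref{f.lb} gives $\int_{B_{2r}}X_\e\ge c\int_{B_r}Y_\e$. On $\{X_\e\ge M\e\}$ with $M$ large, the Modica bound forces $Y_\e\ge X_\e/2$ and hence $\sqrt{X_\e Y_\e}\ge X_\e/\sqrt 2$; the complement contributes at most $O(\e|B_r|)$. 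Combining with $|\nabla u_\e|\ge\lambda F(\nabla u_\e)$ yields the pre-limit inequality
\[
\|\tilde V_\e\|(B_{2r}(x))\ge c\,\mu_\e(B_r(x))-C\e|B_{2r}|.
\]
Taking $\e\to0$ along radii with $\mu(\partial B_r)=0$ and applying a Vitali covering argument promotes this to the measure inequality $c\mu\le\|\tilde V_0\|$ on $M\setminus\mathcal{S}$.

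Step (ii) is the crucial one. The strategy is to exploit Lemma~\ref{jac.lb}: fixing any $\gamma\in(0,1)$, each $p\in A_\e:=\{|u_\e|\le 1-\gamma\}$ contributes $c(\gamma)\e^{n-1}$ of mass to $\|\tilde V_\e\|$ on $B_{\rho\e}(p)$; a Vitali covering of $A_\e\cap B_r(x_0)$ by disjoint balls of radius $\rho\e$ then gives $\|\tilde V_\e\|(B_{r+\rho\e}(x_0))\ge c(\gamma)\,|A_\e\cap B_r(x_0)|/\e$. The key task is to produce a sequence of scales $r_k\to0$ at which $|A_\e\cap B_{r_k}(x_0)|\ge c_0 r_k^{n-1}\e$, for $\mu$-a.e.\ $x_0$. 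For this one exploits the fact that the positive mass $\mu(B_r(x_0))$ forces a non-trivial transition of $u_\e$ across $B_r(x_0)$; an isoperimetric-type bound on a super-level set of $u_\e$ in $B_r(x_0)$, combined with the fact that the transition layer has thickness of order $\e$ (controlled via Modica and elliptic regularity), yields the desired estimate. Passing to the limit produces $\|\tilde V_0\|(B_{2r}(x_0))\ge c r^{n-1}$ along this sequence, hence $\Theta^{n-1,*}(\|\tilde V_0\|,x_0)>0$.

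The remaining steps are short. For (iii), the positive upper density from (ii) combined with the locally bounded isotropic first variation of $\tilde V_0$ (Corollary~\ref{firstvar.bd.cor}) enables an application of Allard's rectifiability theorem \cite[Section~5]{allard1972first}, which yields that $\tilde V_0$ is rectifiable on $M\setminus\mathcal{S}$ and that $\Theta^{n-1}(\|\tilde V_0\|,x)\in(0,\infty)$ for $\|\tilde V_0\|$-a.e.\ $x$, giving \eqref{tilde.dens}. For (iv), the comparison from (i) makes $\mu$ and $\|\tilde V_0\|$ mutually absolutely continuous with bounded Radon--Nikodym derivatives on $M\setminus\mathcal{S}$, so rectifiability of $\tilde V_0$ transfers to $\mu$ and the density bounds \eqref{mu.dens} follow. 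The main obstacle is step (ii): without a monotonicity formula in the anisotropic setting, the positive upper density must be extracted from Lemma~\ref{jac.lb} together with quantitative isoperimetric control on the interface layer $A_\e$ at every scale.
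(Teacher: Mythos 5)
Your overall architecture (upper density positive $\Rightarrow$ Allard $\Rightarrow$ rectifiability $\Rightarrow$ transfer to $\mu$) matches the paper, but two of your steps have genuine problems.

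First, step (i) as written is unsound and cannot precede step (ii). You quote Modica's bound as $X_\e\le Y_\e+C\e$, but Theorem~\ref{thm:modicabound} gives $\frac{\e}{2}F(\nabla u_\e)^2\le\frac{W(u_\e)}{\e}+C$ with $C$ a \emph{constant}, not $C\e$. Consequently the threshold below which $Y_\e\ge X_\e/2$ may fail is $X_\e\lesssim 1$, not $X_\e\lesssim\e$, and the exceptional set contributes $O(r^n)$, not $O(\e\,|B_r|)$, to $\int_{B_r}X_\e$. The resulting pre-limit inequality is $\|\tilde V_\e\|(B_{2r})\ge c\,\mu_\e(B_r)-Cr^n$, and the error $Cr^n$ does \emph{not} vanish as $\e\to0$ at fixed $r$; to absorb it one needs $r^n=o(\mu(B_r))$, i.e.\ precisely the positive lower density \eqref{tilde.dens}. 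This is why the paper proves the density statement first and only then deduces $c\mu\le\|\tilde V_0\|$ (choosing radii where $\|\tilde V_0\|(B_r)\ge 4^{-n}\|\tilde V_0\|(B_{2r})\ge c(p)r^{n-1}$). Your ordering must be reversed.

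Second, and more seriously, step (ii) — which you correctly identify as the crux — is not a proof. The claim that $|\{|u_\e|\le1-\gamma\}\cap B_r(x_0)|\ge c_0 r^{n-1}\e$ for a sequence of scales, extracted from ``a non-trivial transition of $u_\e$ across $B_r(x_0)$'' and ``an isoperimetric-type bound on a super-level set,'' is exactly the kind of scale-bridging estimate that the absence of a monotonicity formula makes hard: positive mass $\mu(B_r(x_0))>0$ does not by itself force $u_\e$ to take values near both $\pm1$ on sets of controlled volume (energy can concentrate on hidden/even-multiplicity interfaces, and a priori nothing prevents the transition region from degenerating as $r\to0$ at a $\mu$-null-complement of points). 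The mechanism the paper actually uses is different: (a) by testing the Euler--Lagrange equation with $\lambda-u_\e$ one shows $\int_{\{|u_\e|>1-2\gamma\}}\sqrt{2W(u_\e)}\,|\nabla u_\e|\le C(\gamma+\e)$, so that up to a set of small $\|\tilde V_\e\|$-measure every point lies in a good set $\mathcal G_{\e,\gamma}$ where $|u_\e|\le1-2\gamma$ \emph{and} $|\delta\tilde V_\e|(B_r)\le\gamma^{-1}\|\tilde V_\e\|(B_r)$ for all $r<r_0$ (the latter from the stability-based first variation bound \eqref{first.var.ve} plus Besicovitch); (b) at such points the almost-monotonicity formula for varifolds with first variation controlled by mass (\cite[Theorem 17.6]{Simon} with $\alpha=1$) propagates the scale-$\rho\e$ lower bound of Lemma~\ref{jac.lb} up to macroscopic radii, giving $\|\tilde V_\e\|(B_r(x))\ge c(\gamma)r^{n-1}$ for $r\in(\rho\e,r_0)$. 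You never use the first variation bound at positive $\e$ for this purpose, and without it (or a substitute) your Vitali-counting of $\e$-balls has no way to reach macroscopic scales. You would need to supply both ingredients (a) and (b) to close the argument.
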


\begin{proof}
	While the upper bound $\|\tilde V_0\|\le C\mu$ was already obtained (on all of $M$),
	we claim that the lower bound $\|\tilde V_0\|\ge c\mu$ on $M\setminus\mathcal{S}$ follows from \eqref{tilde.dens}.
	Indeed, given $p\not\in\mathcal{S}$, let us fix a ball $B_{2r}(p)\subseteq U$,
	where $U$ is the neighborhood given by the previous result.
	First of all, by Theorem \ref{thm:modicabound}, we have
	$$\|\tilde V_\epsilon\|(B_{2r}(p))\ge c\int_{B_{2r}(p)}\sqrt{2W(u_\e)}F(\nabla u_\e)
	\ge \int_{B_{2r}(p)}[c\e F(\nabla u)^2-C\e F(\nabla u)]
	\ge c\int_{B_{2r}(p)}\e F(\nabla u)^2-Cr^n,$$
	thanks to the uniform $C^1$ bound $F(\nabla u_\e)\le C\e^{-1}$, which again follows from Theorem \ref{thm:modicabound}.
	
	By Lemma \ref{f.lb}, which applies since $u_{\e,\delta}$ is stable on $U$ by construction (for $\e$, and in turn $\delta$, small enough), we deduce that
	$$\|\tilde V_\epsilon\|(B_{2r}(p))\ge c\int_{B_r(p)}e_\e(u_\e)-Cr^n,$$
	and hence in the limit $\e\to0$ we get $\|\tilde V_0\|(\bar B_{2r}(p))\ge c\mu(B_r(p))-Cr^n$. Approximating $r$ from below, we deduce 
	$$\|\tilde V_0\|(B_{2r}(p))\ge c\mu(B_r(p))-Cr^n.$$
	 Assuming that $\|\tilde V_0\|(B_r(p))\ge c(p)r^{n-1}$
	for any $r>0$ small enough, we can then find $r>0$ as small as we want and such that
	$$\|\tilde V_0\|(B_{2r}(p))\ge c(p)r^{n-1},\quad \|\tilde V_0\|(B_{r}(p))\ge 4^{-n}\|\tilde V_0\|(B_{2r}(p)).$$
	We then have $r^n=o(\|\tilde V_0\|(B_r(p)))$, and thus
	$$\|\tilde V_0\|(B_{r}(p))\ge 4^{-n}\|\tilde V_0\|(B_{2r}(p))\ge c\mu(B_r(p)),$$
	from which the bound $\|\tilde V_0\|\ge c\mu$ follows, thanks to Besicovitch's differentiation theorem.
	
	To show the second part of the statement, it suffices to show that $\Theta^{n-1,*}(\|\tilde V_0\|,x)>0$ for $\|\tilde V_0\|$-a.e.\ $x$:
	indeed, we already proved in Corollary \ref{firstvar.bd.cor} that the varifold $\tilde V_0$ has locally bounded (isotropic) first variation in $M\setminus\mathcal{S}$, so that here its rectifiability follows by Allard's classical rectifiability criterion \cite[Section 5]{allard1972first}, which gives \eqref{tilde.dens}.
	Finally, \eqref{mu.dens} is now clear on $M\setminus\mathcal{S}$.

	To check that $\Theta^{n-1,*}(\|\tilde V_0\|,x)>0$ for $\|\tilde V_0\|$-a.e.\ $x$, let us fix $U$ as above, and fix $U'\subset U$ and $r_0>0$ such that
    $B_{2r_0}(x)\subseteq U$ for all $x\in U'$.
    Given $\gamma>0$, we let $\mathcal{G}_{\epsilon,\gamma}$ denote the set of points $x\in U'$ such that
    \begin{equation}\label{g.gamma}
    |u_\epsilon(x)|\le1-2\gamma,\quad|\delta\tilde V_\epsilon|(B_r(x))\le\frac{\|\tilde V_\epsilon\|(B_{r}(x))}{\gamma}
    \end{equation}
    for all $r\in(0,r_0)$. Since $|\delta\tilde V_\epsilon|(U)\le C(U)$ by \eqref{first.var.ve}, a simple application of Besicovitch's covering lemma gives
    $$\|\tilde V_\epsilon\|(U'\setminus\mathcal{G}_{\epsilon,\gamma})
    \le C(U)\gamma+\int_{U'\cap\{|u_\epsilon|>1-2\gamma\}}\sqrt{2W(u_\epsilon)}|\nabla u_\epsilon|.$$
    We claim that the last term vanishes in the limit $\e,\gamma\to0$. Indeed,
    testing the Euler--Lagrange equation \eqref{el.div} with $\lambda-u_\e$ we have
    $$\int_M \e^{-1}W'(u_\e)(\lambda-u_\e)\le C\int_M \e|\nabla u_\e|^2\le CE_\e(u_\e)\le C;$$
    choosing $\lambda\in(-1,1)$ to be the maximum point of $W|_{[-1,1]}$
    (recall that $\sqrt{W}$ is assumed to be strictly concave on $[-1,1]$),
    we have $(\lambda-s)W'(s)\ge0$ for all $s\in[-1,1]$, and actually
    $$(\lambda-s)W'(s)\ge -c\operatorname{sgn}(s)W'(s)\ge c\sqrt{W(s)}\quad\text{for }|s|\in[1-2\gamma,1]$$
    for $\gamma>0$ small enough, as $W''(\pm1)>0$. We infer that
    $$\int_{\{|u_\e|>1-2\gamma\}}\e^{-1}\sqrt{W(u_\e)}\le C,$$
    and thus, since $\sqrt{W(s)}\le C(1-|s|)$ on $[-1,1]$ (again as $W''(\pm1)>0$),
    \begin{equation}\label{W.bd.gamma}
        \int_{\{|u_\e|>1-2\gamma\}}\e^{-1}{W(u_\e)}\le C\gamma.
    \end{equation}
    By Theorem \ref{thm:modicabound} and the last two bounds, we have shown that
    $$\int_{\{|u_\e|>1-2\gamma\}}\sqrt{2W(u_\epsilon)}|\nabla u_\epsilon|
    \le C\int_{\{|u_\e|>1-2\gamma\}}[\e^{-1}W(u_\epsilon)+\sqrt{W(u_\e)}]
    \le C\gamma+C\e.$$
    In summary, we deduce that there exists a small $\gamma_0>0$ such that
    $$\|\tilde V_\epsilon\|(U'\setminus\mathcal{G}_{\epsilon,\gamma})
    \le C(U)(\gamma+\e)$$
    for all $\gamma\in(0,\gamma_0)$ and $\e>0$ small.

    Up to a subsequence, let $\mathcal{G}_{0,\gamma}$
    denote the Hausdorff limit of the closures
    $\bar{\mathcal{G}}_{\epsilon,\gamma}$, along a subsequence $\e\to0$ depending on $\gamma\in(0,\gamma_0)$. We claim that
    $$\Theta^{n-1}(\|\tilde V_0\|,x)>0$$
    at each $x\in \mathcal{G}_{0,\gamma}$.
    Indeed, thanks to \eqref{g.gamma}, we can apply \cite[Theorem 17.6]{Simon} to the varifold $\tilde V_\epsilon$
    with $\alpha=1$; note that here we use geodesic balls rather than Euclidean ones, but the proof still carries through. Hence we obtain
    $$\frac{\|\tilde V_\epsilon\|(B_r(x))}{r^{n-1}}
    \ge c(\gamma)\frac{\|\tilde V_\epsilon\|(B_{\rho\epsilon}(x))}{(\rho\epsilon)^{n-1}}\ge c(\gamma)>0$$
    for any $x\in\mathcal{G}_{\epsilon,\gamma}$ and $r\in(\rho\epsilon,r_0)$,
    thanks to Lemma \ref{jac.lb}, which applies as $B_{\rho\e}(x)\subseteq U$ and $|u_{\e,\delta}(x)|\le1-\gamma$ for $\delta$ small enough, by \eqref{g.gamma}. 
    Thus, we also have
    $$\frac{\|\tilde V_0\|(\bar B_r(x))}{r^{n-1}}
    \ge c(\gamma)>0$$
    for any $x\in\mathcal{G}_{0,\gamma}$. Finally, we have
    $$\|\tilde V_0\|(U'\setminus\mathcal{G}_{0,\gamma})
    \le\liminf_{\epsilon\to0}
    \|\tilde V_\epsilon\|(U'\setminus\bar{\mathcal{G}}_{\epsilon,\gamma})\le C(U)\gamma.$$
    Since $\|\tilde V_0\|$ has positive density
    at all points in $\bigcup_{\gamma\in(0,\gamma_0)}\mathcal{G}_{0,\gamma}$ and we can cover $M\setminus\mathcal{S}$
    with countably many such $U'$, the statement follows. Note that the subsequence $\e\to0$ defining $\mathcal{G}_{0,\gamma}$ depends on $\gamma$,
    but this is irrelevant.
\end{proof}

% \begin{corollary}\label{rect}
%     The limit varifold $V$ is rectifiable.
% \end{corollary}

% \begin{proof}
%     Indeed, by Lemma \ref{f.lb} (note that its proof can be applied locally), we have $|V|\ll|\tilde V|$ on $M\setminus\{p_1,\dots,p_K\}$.
%     In particular, $V$ is rectifiable on this open set and it has positive density $|V|$-a.e.\ here, and thus also $|V|$-a.e.\ on $M$. By the main result of [De Philippis, De Rosa, Ghiraldin], $V$ is rectifiable on all of $M$.
% \end{proof}

\section{Stress-energy tensor and integrality of the limit varifold}
Given $u_{\varepsilon}:M\to\R$ critical for $E_\e$, a straightforward computation using inner variations
shows that, for any vector field $X\in C^1(M,TM)$, denoting by $DX$ the $(1,1)$-tensor given by the Levi-Civita
connection, we have
\begin{equation}\label{div.free}\int_M\ang{T_\e,DX}=\int_M O(\e|\nabla u|^2|X|),\end{equation}
where $T_\e$ is the stress-energy tensor, namely the $(1,1)$-tensor given by
\begin{equation}\label{strtens}
    T_\e:=e_\epsilon(u_\e)I-\epsilon \nabla u_\e\otimes D(F^2/2)(\nabla u_\e),
\end{equation}
with $D(F^2/2)(\nabla u_\e)\in T_x^*M$ denoting the differential of $F_x^2/2=(F^2/2)|_{T_xM}$
(viewed as a function $T_xM\to\R$) at $\nabla u(x)$, for any given $x\in M$; the error term comes from the fact that $F_x^2$ depends on $x$.

In the sequel, it is useful to consider the map
$$C_F(\nu):=I-\frac{\nu\otimes DF(\nu)}{F(\nu)},$$
where $\nu\in TM$ is a unit vector.
We observe that
$$T_\e=[e_\e(u_\e)-\e F(\nabla u_\e)^2]I+\e F(\nabla u_\e)^2 C_F(\nu_\e),\quad\nu_\e:=\frac{\nabla u_\e}{|\nabla u_\e|} \quad \mbox{on } \{\nabla u_\e\neq 0\},$$
while we let $\nu_\e:=0$ and $C_F(\nu_\e):=0$ on $\{\nabla u_\e=0\}$.
We have the trivial bound
$$|T_\e|\le Ce_\e(u_\e).$$
Assuming \eqref{e.bded}, let $T_0$ be a limit of the measures $T_\e\,d\operatorname{vol}_g$, up to a subsequence.

\begin{proposition}\label{prop:kernel}
The limit $T_0$ has the form
$$dT_0=((1-\lambda)I+\lambda A)\,d\mu+d\xi,$$
where $\lambda:M\to[0,1]$ and the measure $\xi$ satisfies $|\xi|\le C\operatorname{vol}_g$,
while for $\mu$-a.e.\ $x$ the tensor $A(x)$ belongs to the (compact) convex hull
$$\mathcal{C}_x:=\operatorname{co}(\{C_F(\nu)\,:\,\nu\in T_xM,\ |\nu|=1\}).$$
\end{proposition}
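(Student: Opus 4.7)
The plan is to split $T_\e$ into a scalar multiple of $I$ plus a part proportional to $C_F(\nu_\e)$, pass to subsequential weak-$*$ limits of each piece, and reorganize. From the identity recorded just before the statement,
\[
T_\e = \left[\frac{W(u_\e)}{\e} - \frac{\e F(\nabla u_\e)^2}{2}\right] I + \e F(\nabla u_\e)^2\, C_F(\nu_\e).
\]
Theorem~\ref{thm:modicabound} immediately gives $\frac{W(u_\e)}{\e} - \frac{\e F(\nabla u_\e)^2}{2} \ge -C$; moreover, rewriting $\frac{\e F(\nabla u_\e)^2}{2} = e_\e(u_\e) - \frac{W(u_\e)}{\e}$ and substituting back into the Modica bound yields $\e F(\nabla u_\e)^2 \le e_\e(u_\e) + C$. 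Extracting a common subsequence, let $\sigma$, $\kappa$, $\tau$ be the weak-$*$ limits of $\bigl[\frac{W(u_\e)}{\e} - \frac{\e F(\nabla u_\e)^2}{2}\bigr]\,d\operatorname{vol}_g$, $\e F(\nabla u_\e)^2\,d\operatorname{vol}_g$ and $\e F(\nabla u_\e)^2\, C_F(\nu_\e)\,d\operatorname{vol}_g$ respectively. Then $T_0 = \sigma I + \tau$, and the above inequalities together with $|C_F|\le C_0$ give, as signed measures, $\sigma + \kappa = \mu$, $\sigma \ge -C\operatorname{vol}_g$, $\kappa \le \mu + C\operatorname{vol}_g$ and $|\tau| \le C\kappa$.

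The main obstacle is the next step: identifying the Radon--Nikodym derivative $A := d\tau/d\kappa \in L^\infty(\kappa)$ as taking values in $\mathcal{C}_x$ for $\kappa$-a.e.\ $x$. I would argue by separation. For any smooth $(1,1)$-tensor field $\Phi$ on $M$ and any $\phi\in C^0(M)$ satisfying $\langle\Phi(x),C\rangle \le \phi(x)$ for every $x\in M$ and every $C\in\mathcal{C}_x$, the pointwise inequality $\langle\Phi,C_F(\nu_\e)\rangle\le \phi$ integrated against the weight $\e F(\nabla u_\e)^2\,d\operatorname{vol}_g$ passes to the weak-$*$ limit to yield $\int_M\langle\Phi,A\rangle\,d\kappa\le\int_M\phi\,d\kappa$. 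Localizing on small balls (via test functions) upgrades this to the pointwise statement $\langle\Phi(x),A(x)\rangle\le\phi(x)$ at $\kappa$-a.e.\ $x$. Because $\mathcal{C}_x$ depends continuously on $x$ and, at each $x$, coincides with the fiberwise intersection of the half-spaces so obtained (by Hahn--Banach, taking $\phi(x):=\sup_{C\in\mathcal{C}_x}\langle\Phi(x),C\rangle$), a countable dense family of admissible pairs $(\Phi,\phi)$ isolates a full-$\kappa$-measure set on which $A(x)\in\mathcal{C}_x$. Outside the support of $\kappa$ the density $A$ is irrelevant and can be set to any measurable selection of $\mathcal{C}_\cdot$.

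Finally, to match the claimed form I decompose $\kappa$ relative to $\mu$ and $\operatorname{vol}_g$. Set $\kappa_2:=(\kappa-\mu)^+$; the bound $\kappa - \mu\le C\operatorname{vol}_g$ forces $\kappa_2\le C\operatorname{vol}_g$, while $\kappa_1:=\kappa-\kappa_2=\mu-(\kappa-\mu)^-$ is a non-negative measure with $\kappa_1\le\mu$ (since $(\kappa-\mu)^-\le\mu$). By Radon--Nikodym, $\kappa_1=\lambda\,d\mu$ for some Borel $\lambda:M\to[0,1]$. Using $\sigma=\mu-\kappa$ and $\tau=A\,d\kappa$, a direct computation yields
\[
T_0 \;=\; \sigma I+\tau \;=\; I\,d\mu+(A-I)\,d\kappa \;=\; [(1-\lambda)I+\lambda A]\,d\mu+(A-I)\,d\kappa_2.
\]
Setting $\xi:=(A-I)\,d\kappa_2$, one has $|\xi|\le(|A|+1)\,d\kappa_2\le C\operatorname{vol}_g$, completing the argument.
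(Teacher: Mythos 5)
Your proof is correct, and it starts from the same identity as the paper, namely $T_\e=[e_\e(u_\e)-\e F(\nabla u_\e)^2]I+\e F(\nabla u_\e)^2C_F(\nu_\e)$, with Modica's bound (Theorem~\ref{thm:modicabound}) doing the same crucial job of controlling the excess $\e F(\nabla u_\e)^2-e_\e(u_\e)$ by a constant. The difference is purely in where the convex decomposition is performed. The paper does it at the level of fixed $\e$: it defines $\lambda_\e:=\min\{\e F(\nabla u_\e)^2/e_\e(u_\e),1\}$ pointwise, writes $T_\e=[(1-\lambda_\e)I+\lambda_\e C_F(\nu_\e)]e_\e(u_\e)+\xi_\e$ with $|\xi_\e|\le C$, and then asserts that the limit density lies in $\operatorname{co}(\{I\}\cup\mathcal{C}_x)$. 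You instead pass the three pieces $\sigma,\kappa,\tau$ to the limit separately and recover $\lambda$ from the Jordan decomposition of $\kappa-\mu$, with $\kappa_1=\lambda\,d\mu$ and $\xi=(A-I)\,d\kappa_2$. Your route has the merit of making explicit two steps the paper leaves implicit: the Young-measure/separation argument showing that the Radon--Nikodym density $A=d\tau/d\kappa$ takes values in $\mathcal{C}_x$ (and only in $\mathcal{C}_x$, not in $\operatorname{co}(\{I\}\cup\mathcal{C}_x)$, which is what the statement actually requires), and the measurable construction of the pair $(\lambda,A)$. Two minor points to tighten: the inequality $\int\langle\Phi,A\rangle\,d\kappa\le\int\phi\,d\kappa$ should be localized with nonnegative cutoffs before applying Lebesgue differentiation with respect to $\kappa$ (which you indicate), and the final claim that $A(x)\in\mathcal{C}_x$ for $\mu$-a.e.\ $x$ (rather than $\kappa$-a.e.) needs the observation that $\mu\res\{\lambda>0\}\ll\kappa$, since $\kappa\ge\lambda\,d\mu$, while on $\{\lambda=0\}$ the value of $A$ is immaterial and can be taken to be any measurable selection of $\mathcal{C}_x$. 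Neither point affects the validity of the argument.
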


\begin{proof}
Indeed, we have
$$T_\e=[(1-\lambda_\e)I+\lambda_\e C_F(\nu_\e)] e_\e(u_\e)+\xi_\e,$$
where
\begin{equation}\label{lambda.e}
\lambda_\e:=\min\left\{\frac{\e F(\nabla u_\e)^2}{e_\e(u_\e)},1\right\}\in[0,1]
\end{equation}
and defined to be zero on $\{e_\e(u_\e)=0\}$, and
$$\xi_\e=-(\e F(\nabla u_\e)^2-e_\e(u_\e))^+I+(\e F(\nabla u_\e)^2-e_\e(u_\e))^+C_F(\nu_\e).$$
Crucially, by the generalization of Modica's bound, namely Theorem \ref{thm:modicabound},
we have $\e F(\nabla u_\e)^2-e_\e(u_\e)\le C$ pointwise, and thus
$$|\xi_\e|\le C.$$
Since $(1-\lambda_\e)I+\lambda_\e C_F(\nu_\e)\in\operatorname{co}(\{I\}\cup\mathcal{C}_x)$
at any given $x\in M$, it follows that
$$dT_0=B\,d\mu+d\xi,$$
where $\xi$ is the limit of $\xi_\e\,d\operatorname{vol}_g$ and
$B(x)\in\operatorname{co}(\{I\}\cup\mathcal{C}_x)$ a.e., as desired.
\end{proof}

\begin{theorem}\label{varifold.constr}
	Assuming that $u_\e=\lim_{\delta\to 0} u_{\e,\delta}$ is a limit of critical points for $E_{\e,\delta}$
	with Morse index $\le m$, then the measures $T_0$ and $\mu$ are rectifiable. In fact, we have
	$$\lambda=0,\quad A(x)=C_F(\nu_x)$$
	for a suitable unit vector $\nu_x\in T_xM$, at $\mu$-a.e.\ $x\in M$.
    Furthermore, writing $d\mu=\theta\,d(\mathcal{H}^{n-1}\res\Sigma)$ for a suitable rectifiable Borel set $\Sigma\subset M$ with $\sigma$-finite $\mathcal{H}^{n-1}$ measure and $\theta:\Sigma\to(0,\infty)$, the $(n-1)$-dimensional varifold
    $$dV(x,\nu^\perp):=\frac{\theta(x)}{F(\nu_x)} \delta_{\nu_x}(\nu)\otimes d(\mathcal{H}^{n-1}\res\Sigma)(x)$$
    is rectifiable and $F$-stationary, and $\nu_x$ is $\mathcal{H}^{n-1}$-a.e.\ the unit normal to $\Sigma$
    (unique up to sign).
\end{theorem}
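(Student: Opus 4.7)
The plan is to combine the almost-conservation law \eqref{div.free} for the stress-energy tensor $T_\varepsilon$ with the convex structure in Proposition \ref{prop:kernel} and invoke a rectifiability criterion for PDE-constrained measures in the spirit of \cite{de2018rectifiability}, in order to globalize the rectifiability of Theorem \ref{rect} to all of $M$ (bypassing the exceptional set $\mathcal{S}$) and pin down the tangent structure of $\mu$.

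First I pass \eqref{div.free} to the limit: since $\int_M \varepsilon|\nabla u_\varepsilon|^2 \le C\,E_\varepsilon(u_\varepsilon) \le C$ uniformly in $\varepsilon$, the distribution $\operatorname{div} T_\varepsilon$ is absolutely continuous with respect to $\operatorname{vol}_g$ with uniformly bounded density, hence $\operatorname{div} T_0$ exists on all of $M$ as a Radon measure with $|\operatorname{div} T_0| \le C\operatorname{vol}_g$. Crucially, no exceptional set $\mathcal{S}$ is needed here, in contrast to the isotropic first variation used in Theorem \ref{rect}. Combined with Proposition \ref{prop:kernel}, this yields $dT_0 = B(x)\,d\mu + d\xi$ with $B(x) \in \operatorname{co}(\{I\}\cup\mathcal{C}_x)$ and $|\xi|\le C\operatorname{vol}_g$.

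Next I apply the PDE-constrained measure machinery (De Philippis--Rindler, in its anisotropic form \cite{de2018rectifiability}): at $\mu$-a.e.\ $x$ the value $B(x)$ must lie in the wave cone of the divergence operator, i.e.\ $\ker B(x) \neq \{0\}$. Since $I$ is invertible, the decomposition $B = (1-\lambda)I + \lambda A$ forces $\lambda(x)=1$, so $B(x) \in \mathcal{C}_x$ at $\mu$-a.e.\ $x$. An atomic analysis within $\mathcal{C}_x$, exploiting that each extreme point $C_F(\nu)$ has exactly the one-dimensional kernel $\operatorname{span}\{\nu\}$ (by $1$-homogeneity and uniform convexity \eqref{F.cvx} of $F$), shows that a nontrivial convex combination $\sum_i t_i C_F(\nu_i)$ with linearly independent $\nu_i$ would enlarge the kernel; tangent measures of $\mu$ would then have to be translation-invariant in multiple independent directions, which is incompatible with the locally finite upper density of $\mu$ in dimension $n-1$. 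This pins down $B(x) = C_F(\nu_x)$ for a unit vector $\nu_x$ defined $\mu$-a.e., and blow-ups of $\mu$ are concentrated on the $(n-1)$-plane orthogonal to $\nu_x$, giving $(n-1)$-rectifiability of $\mu$ on all of $M$.

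Writing $d\mu = \theta\,d(\mathcal{H}^{n-1}\res\Sigma)$, the tangent plane $T_x\Sigma$ is orthogonal to $\ker B(x) = \operatorname{span}\{\nu_x\}$, so $\nu_x$ is the $\mathcal{H}^{n-1}$-a.e.\ unit normal. To check $\delta_F V \equiv 0$ for the varifold $V$ in the statement, I insert $dT_0 = C_F(\nu_x)\,d\mu + d\xi$ into the anisotropic first variation formula \eqref{e:firstvariation}, in its general non-autonomous, Riemannian form: the integral $\int\langle C_F(\nu_x), DX\rangle\,\theta\,d(\mathcal{H}^{n-1}\res\Sigma)$ coincides with $\int\langle B(x), DX\rangle\,d\mu$, and by the $\varepsilon\to 0$ limit of \eqref{div.free} it matches the complementary $\xi$ and spatial-derivative $D_xF$ contributions in the full first variation formula, so everything cancels. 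The main obstacle is the atomic reduction in the third paragraph — passing from ``$B(x)$ has nontrivial kernel'' to ``$B(x)$ is the single extreme point $C_F(\nu_x)$'' — since the absence of an anisotropic monotonicity formula rules out density-based arguments and forces reliance on the tangent-measure analysis of PDE-constrained measures, carefully coupled with the atomic convex structure of $\mathcal{C}_x$ guaranteed by the uniform convexity of $F$.
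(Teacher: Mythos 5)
Your overall architecture coincides with the paper's (limit of the stress-energy tensor, blow-up, divergence-free tangent measure, nontrivial kernel of $B$, atomic reduction in $\mathcal{C}_x$), but the central step — passing from $\ker B(x)\neq\{0\}$ to $\lambda=1$ and $B(x)=C_F(\nu_x)$ — is argued with two inverted mechanisms, and as written it fails. First, "$I$ is invertible, hence $\lambda=1$" is a non sequitur: $(1-\lambda)I+\lambda A$ can perfectly well be singular for $\lambda<1$ if $A$ had a negative eigenvalue, so one must actually rule this out. Second, and more seriously, a nontrivial convex combination $\sum_i t_i\,C_F(\nu_i)$ with $\nu_i\neq\pm\nu_j$ does \emph{not} enlarge the kernel — it destroys it: if $w=\sum_i t_i\frac{DF(\nu_i)[w]}{F(\nu_i)}\nu_i$ lies in the kernel, the convexity bounds $|DF(\nu_i)[\nu_j]|\le F(\nu_j)$, strict for $\nu_j\neq\pm\nu_i$ by the strict convexity of $F$ along non-radial directions, force $w=0$. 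This is precisely the "atomic condition" of \cite{de2018rectifiability}, and it is the reason why $\ker B\neq\{0\}$ \emph{forces} $\alpha$ to concentrate on $\{\pm\nu_0\}$ rather than the other way around. Moreover, even granting your (incorrect) premise of an enlarged kernel, the purported contradiction does not follow: invariance of the tangent measure is along $\operatorname{im}(B^\top)=(\ker B)^\perp$, so a larger kernel means \emph{fewer} invariance directions, and invariance along an $(n-2)$-plane is perfectly compatible with finite upper $(n-1)$-density. The correct single computation, which handles both $\lambda=1$ and atomicity at once, is to pair: $0=\ang{DF_{x_0}(\nu_0),B\nu_0}=(1-\lambda_0)F_{x_0}(\nu_0)+\lambda_0\int\ang{DF_{x_0}(\nu_0),C_{F_{x_0}}(\nu)\nu_0}\,d\alpha(\nu)$, observe that each term under the integral is $\ge 0$ with equality only for $\nu=\pm\nu_0$ (by $1$-homogeneity, evenness, and strict convexity), and conclude $\lambda_0=1$ and $\alpha$ supported on $\{\pm\nu_0\}$.

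Two smaller gaps: the error term in \eqref{div.free} is controlled by $\int\e|\nabla u_\e|^2|X|$, whose limit is bounded by $C\int|X|\,d\mu$, not by $C\|X\|_\infty\operatorname{vol}_g$ with bounded density (the density $\e|\nabla u_\e|^2$ is only bounded in $L^1$); this does not hurt the blow-up argument, since the rescaled test fields still kill the divergence, but the statement as you wrote it is false. And you cannot simply "bypass" $\mathcal{S}$: the density bounds \eqref{mu.dens} that you invoke to exclude Lebesgue-type tangent measures and to kill $\xi$ in the blow-up come from Theorem \ref{rect} and hold only on $M\setminus\mathcal{S}$. One must separately show $\mu(\mathcal{S})=0$, which the paper does by noting that an atom of $\mu$ at $x_0\in\mathcal{S}$ would force the blow-up $\tilde\mu=\delta_0$, incompatible with the translation invariance along the hyperplane $\nu_0^\perp$ just established.
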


%\begin{remark}
    It follows from the formula for $V$ that its anisotropic energy is
    $$\FF(V)=\mu(M),\quad\FF(V;U)=\mu(U)\text{ for all }U\subseteq M\text{ Borel}.$$
%\end{remark}

\begin{proof}
We observe that $\mu$-a.e.\ $x\in M$ is an approximate continuity point of $\lambda$ and $A$,
and moreover by Theorem \ref{rect} satisfies
\begin{equation}\label{mu.dens.bis}
0<\Theta^{n-1}_*(\mu,x)\le\Theta^{n-1,*}(\mu,x)<\infty,\end{equation}
provided that $x\not\in\mathcal{S}$.
In this case, thanks to Proposition \ref{prop:kernel}, any blow-up $\tilde T_0$ of $T_0$ at any such point $x_0\in M$ will be of the form
$$d\tilde T_0=B\,d\tilde\mu,$$
where
\begin{equation}\label{B.form}B=(1-\lambda_0)I+\lambda_0\int_{\mathbb{S}^{n-1}}C_{F_{x_0}}(\nu)\,d\alpha(\nu)\end{equation}
is a constant matrix given by a suitable $\lambda_0\in[0,1]$ and a probability measure $\alpha$ on $\mathbb{S}^{n-1}$, and $\tilde \mu$ is a blow-up of $\mu$ at $x_0$.
Indeed, $\xi$ disappears in the blow-up, as $|\xi|(B_r(x_0))=o(r^{n-1})=o(\mu(B_r(x_0)))$, thanks to the fact that $\mu(B_r(x_0))\ge cr^{n-1}$ for $r$ small.
Note that a blow-up defined on $\R^n$ exists also when $x_0\in\mathcal{S}$, in which case $\tilde\mu=\delta_0$.

As guaranteed by \eqref{mu.dens.bis} (or by $\tilde\mu=\delta_0$), the measure $\tilde\mu$ cannot be a constant multiple of the Lebesgue measure
on $T_{x_0}M\cong\R^n$, as $\Theta^{n-1}_*(\tilde\mu,0)>0$. On the other hand, \eqref{div.free} implies
$$|\ang{T_0,DX}|\le C\int_M|X|\,d\mu;$$
given $Y\in C^1_c(\R^n,\R^n)$, we can plug $X(p):=\sum_{i=1}^nY^i(r^{-1}\exp_{x_0}^{-1}(p))e_i(p)$, extended to zero outside the domain of $\exp_{x_0}^{-1}$, for a fixed orthonormal frame $\{e_i\}_{i=1}^n$
defined near $x_0$ inducing the chosen identification $T_{x_0}M\cong\R^n$. Letting $r\to0$, we deduce that
$$\ang{\tilde T_0,DY}=0\quad\text{for all }Y\in C^1_c(\R^n,\R^n).$$

Taking $Y$ of the form $Y=\phi v$, we then see that $\tilde\mu$ is invariant along $B^\top v$.
Since $\tilde\mu$ is not a multiple of the Lebesgue measure, we deduce that
$$\operatorname{ker}(B)\neq\{0\}.$$
Let then $\nu_0\in\operatorname{ker}(B)$ be a unit vector.
Recalling \eqref{B.form}, we see that
\begin{equation}\label{eq:exp}
    0=\ang{DF_{x_0}(\nu_0),B\nu_0}=(1-\lambda_0)F_{x_0}(\nu_0)+\lambda_0\int_{\mathbb{S}^{n-1}}\ang{DF_{x_0}(\nu_0),C_{F_{x_0}}(\nu)\nu_0}\,d\alpha(\nu).
\end{equation}
By $1$-homogeneity and convexity of $F_{x_0}$, we have that
$$DF_{x_0}(\nu)[\nu]=F(\nu),\quad DF_{x_0}(\nu)[\nu']=F_{x_0}(\nu)+DF_{x_0}(\nu)[\nu'-\nu]\le F_{x_0}(\nu')$$
and, since $F$ is even, it also holds that $|DF_{x_0}(\nu)[\nu_0]|\le F_{x_0}(\nu_0)$ and $|DF_{x_0}(\nu_0)[\nu]|\le F_{x_0}(\nu)$. Hence, we obtain that
\begin{equation}\label{atomic}\ang{DF_{x_0}(\nu_0),C_{F_{x_0}}(\nu)\nu_0}=F_{x_0}(\nu_0)-\frac{DF_{x_0}(\nu)[\nu_0]}{F(\nu)}DF_{x_0}(\nu_0)[\nu]\ge0.\end{equation}
Plugging \eqref{atomic} in \eqref{eq:exp}, we deduce that $\lambda_0=1$. Moreover, since $F_{x_0}$ is strictly convex along non-radial directions, \eqref{atomic} can be an equality only
when $\nu=\pm\nu_0$.

Thus, $\alpha$ is concentrated on $\{\pm\nu_0\}$ so that, using again that $F$ is even, we obtain
$$B=A(x_0)=I-\frac{\nu_0\otimes DF_{x_0}(\nu_0)}{F(x_0)}.$$
For the same reason, $\operatorname{ker}(B(x_0))=\operatorname{span}\{\nu_0\}$
and thus the image of $B^\top $ is $\nu_0^\perp$. Hence, $\tilde\mu$ is invariant
along the hyperplane $\nu_0^\perp$.

We cannot have $x_0\in\mathcal{S}$, since otherwise we would have
$\tilde\mu=\delta_0$, a contradiction with the invariance of $\tilde \mu$ along $\nu_0^\perp$. Hence, $\mu(\mathcal{S})=0$
and the conclusion follows from the rectifiability of $\mu$ proved in Theorem \ref{rect},
since for generic $x_0$ any blow-up must be a constant multiple of $\mathcal{H}^{n-1}\res\nu_\Sigma(x_0)^\perp$, yielding
$\nu_0=\pm\nu_\Sigma(x_0)$.
The $F$-stationarity of $V$ is equivalent to the fact that $T_0$ is divergence-free.
\end{proof}

\begin{remark}
The previous proof generalizes the \emph{atomic condition} found in \cite[Definition 1.1]{de2018rectifiability}, which in codimension one characterizes convex integrands $F$ that are strictly convex along non-radial directions: see \cite[Theorem 1.3]{de2018rectifiability}. In fact, we could avoid appealing
to the rectifiability of $\|\tilde V_0\|$ and rely just on \eqref{mu.dens}: once we reach $\lambda_0=0$,
the rectifiability of $\mu$ follows from \cite[Lemma 2.2]{de2018rectifiability}.
\end{remark}

\begin{remark}\label{rmk:xizero}
    We observe that
    \begin{equation}\label{limit.disc}
        \lim_{\e\to0}\int_M(\e F(\nabla u_\e)^2-e_\e(u_\e))^+=\lim_{\e\to0}\int_M\left(\frac\e{2} F(\nabla u_\e)^2-\frac1\e W(u_\e)\right)^+=0.
    \end{equation}
    Indeed, any weak limit $\upsilon$ of the integrand (as a measure) satisfies $\upsilon\le C\mu$, as well as $\upsilon\le C\operatorname{vol}_g$ by Theorem \ref{thm:modicabound}. Since the measures $\mu$ and $\operatorname{vol}_g$  are mutually singular, we deduce that $\upsilon=0$ and hence \eqref{limit.disc}.
    In particular, we also have
    $$\xi=0.$$
\end{remark}

\begin{remark}\label{vfd.conv}
    Recalling the definition of $\lambda_\e$ from \eqref{lambda.e}, on the compact set
    $$\hat G:=\{\operatorname{co}(\{I\}\cup\mathcal{C}_x)\mid x\in M\}$$
    we can consider the positive measures
    $$dV_\e(x,Z):=\delta_{B_\e(x)}(Z)e_\e(u_\e)(x)\,d\operatorname{vol}_g(x),$$
    where
    $$B_\e(x):=(1-\lambda_\e(x))I+\lambda_\e (x)C_F(\nu_\e(x))=\frac{T_\e(x)-\xi_\e(x)}{e_\e(u_\e)(x)}$$
    (the last equality holding on $\{e_\e(u_\e)>0\}$),
    and a subsequential limit $V_0$.
    Letting $\pi:\hat G\to M$ denote the canonical projection
    and considering the disintegration of $dV_0(x,Z)=\alpha_x(Z)\otimes d\mu(x)$ with respect to $\pi$, let $V_0'$ denote the $\hat G$-valued measure on $M$
    given by replacing each probability measure $\alpha_x$ with its center of mass, namely
    $$dV_0'(x):=Z_x\, d\mu(x),\quad Z_x:=\int_{\pi^{-1}(x)}Z\,d\alpha_x(Z).$$
    It is straightforward to check that, if we have a sequence of measures $W_k\rightharpoonup W_\infty$ in $\hat G$,
    then $W_k'\rightharpoonup W_\infty'$. In particular, since $V_\e\rightharpoonup V_0$ and $V_\e'=(T_\e-\xi_\e)\,d\operatorname{vol}_g\rightharpoonup T_0$ as $\xi_\e\rightharpoonup 0$ by Remark \ref{rmk:xizero}, we deduce that $V_0'=T_0$.
    Exactly the same argument used in the previous proof then shows that the disintegration $\alpha_x$ of $V_0$ consists of a Dirac mass at $\mu$-a.e.\ $x\in M$, i.e., $V_0=V$ up to identifying $C_F(\nu)$ with $\nu^\perp$.
\end{remark}

We are now left to prove that $c_W^{-1}V$ is actually an integral varifold, or equivalently
$$\frac{\theta(x)}{c_W F(\nu_x)}\in\mathbb N, \quad \mbox{for }\|V\|-a.e.\ x,$$
where we recall that $c_W=\int_{-1}^1 \sqrt{2W(t)}\,dt$.

We can fix a $\mu$-generic point $x_0$,
so that we can assume $x_0\not\in\mathcal{S}$ as $\mu(\mathcal S)=0$, \eqref{mu.dens} holds at $x_0$, and $\mu$
has an approximate tangent plane at $x_0$;
as seen in the previous proof, we can also assume that $\mu$ and
$T_0$ blow up to
$$\theta\cdot\mathcal{H}^{n-1}\res\nu^\perp,\quad \theta C_{F_{x_0}}(\nu)\cdot\mathcal{H}^{n-1}\res\nu^\perp,$$
respectively, where $\nu=\nu_{x_0}$ and $\theta=\theta(x_0)$.
Further, by \eqref{limit.disc} and \eqref{W.bd.gamma} we have
$$\limsup_{\e\to0}\int_{\{|u_\e|>1-2\gamma\}}e_\e(u_\e)
\le \limsup_{\e\to0}\int_{\{|u_\e|>1-2\gamma\}}2W(u_\e)
+\limsup_{\e\to0}\int_M\left(\frac\e{2} F(\nabla u_\e)^2-\frac1\e W(u_\e)\right)^+\le C\gamma.$$
Hence, an application of Besicovitch's covering lemma shows that, for a constant $C$ which may depend on the point $x_0$, we can also assume that
\begin{equation}\label{bd.en.gamma}
\limsup_{\e\to0}\int_{B_r(x_0)\cap\{|u_\e|\ge1-\gamma\}}e_\e(u_\e)\le C\gamma\mu(\bar B_r(x_0))\le C\gamma r^{n-1}\quad \text{for all }r\in(0,1).\end{equation}

We choose coordinates so that $x_0=0$ and the weak tangent plane is ($\theta$ times)
$$P:=\{ x \in \mathbb{R}^n \,:\, x^n = 0 \}.$$
By a diagonal argument, we can replace
$(u_\e)$ with a sequence of critical points, defined on
larger and larger balls $B_{R_\e}(0)$ endowed with
metrics $g_\e$ converging to the Euclidean one and integrands $F^\e\to F_{x_0}$,
such that
$$e_\e(u_\e)\,dx\rightharpoonup\theta\,d\mathcal{H}^{n-1}\res P,$$
as well as
$$T_\e\,dx\rightharpoonup \theta C_{F_{x_0}}(e_n)\,d\mathcal{H}^{n-1}\res P.$$

\begin{remark}
    Since the new sequence $u_\e$ is obtained by rescaling a sequence of critical points on a closed manifold,
    the pointwise bound \eqref{eq:modicabound} still holds. Actually, we have
    \begin{equation}\label{modica.blowup}
        \e F(\nabla u_\e)^2/2\le \e^{-1}W(u_\e)+c_\e,\quad c_\e\to0,
    \end{equation}
    since if the new $u_{\e}$ is obtained by rescaling a function $u_{\e'}$ from the original sequence
    then the constant $C$ in \eqref{eq:modicabound} becomes $C\e'/\e=C/\rho$, where $\rho$ is the (larger and larger) dilation factor.
\end{remark}

Moreover, using \eqref{bd.en.gamma} and another diagonal argument, we can even assume that
\begin{equation}\label{bd.en.gamma.bis}\limsup_{\e\to0}\int_{B_{10}(0)\cap\{|u_\e|\ge1-\gamma\}}e_\e(u_\e)\le C\gamma\end{equation}
for any $\gamma\in\{2^{-k}\mid k\in\N\}$, and hence for all $\gamma\in(0,1)$.

%Recalling Remark \ref{vfd.conv} and 
%testing with $e_i\otimes e_i$ ($i=1,\dots,n-1$), we get
%$$\lim_{\e\to0}\int\epsilon F(du)[F(du)-\de_i u\de_i F(du)]=\lim\int\epsilon F^2(du_\epsilon)$$
%Hence
%$$\lim\int\epsilon \de_i u\de_i(F^2)(du)=0.$$

Let $\eta>0$ and $S_\epsilon:=\{|\nu_\e-e_n|>\eta\}$, where $\nu_\e=\frac{\nabla u_\epsilon}{|\nabla u_\epsilon|}$ is defined using the metric $g_\e$,
while $|\nu_\e-e_n|$ is computed using the Euclidean metric.
Then Remark \ref{vfd.conv}, namely the fact that
$$\delta_{B_\e(x)}(Z)e_\e(u_\e)(x)\,d\operatorname{vol}_g(x)=dV_\e(x,Z)\rightharpoonup
dV_0(x,Z)=\delta_{C_{F_{x_0}}(e_n)}(Z)\,d(\theta\mathcal{H}^{n-1}\res P)(x),$$
 gives
$$\int_{B_{10}(0)\cap S_\epsilon}e_\e(u_\e)
=V_\e(\pi^{-1}(B_{10}(0)\cap S_\e))\to0.$$
On the other hand, on $B_{10}(0)\setminus S_\epsilon$ we have $|\de_iu_\epsilon|\le C\eta|\nabla u_\epsilon|\le C\eta F(\nabla u_\epsilon)$ for $i<n$.
Hence, for all $i=1,\dots,n-1$ we have
\begin{equation*}
    \limsup_{\e\to0}\int_{B_{10}(0)} \epsilon|\de_iu_\epsilon|^2
\le\limsup_{\e\to0}\int_{B_{10}(0)\cap S_\epsilon} \epsilon|\de_iu_\epsilon|^2+\limsup_{\e\to0}\int_{B_{10}(0)\setminus S_\epsilon} \epsilon|\de_iu_\epsilon|^2
\le C\eta^2\int\epsilon F^2(\nabla u_\epsilon),
\end{equation*}
so that
\begin{equation}\label{par.to.zero}
    \lim_{\e\to0}\int_{B_{10}(0)} \epsilon|\de_iu_\epsilon|^2=0.
\end{equation}

Let us fix a nonnegative cut-off function $\chi\in C^1_c((-1,1))$,
equal to $1$ on $(-1/2,1/2)$.

\begin{lemma}\label{constancy}
    There exist Borel sets $E_\e\subset B_1^{n-1}(0)$ such that
    $$\mathcal{L}^{n-1}(E_\e)\to0$$
    and, for any sequence of points $p_\e\in B_1^{n-1}(0)\setminus E_\e$,
    $$\int_{\{p_\e\}\times[-1,1]}\chi(x^n)e_{\epsilon}(u_\e(p_\e,x^n))\,dx^n\to \theta.$$
\end{lemma}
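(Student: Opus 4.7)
The plan is to upgrade the weak convergence $\Phi_\e\,dp \rightharpoonup \theta\,dp$, where $\Phi_\e(p) := \int_{-1}^{1}\chi(x^n)\,e_\e(u_\e)(p,x^n)\,dx^n$, into strong convergence in $L^1(B_1^{n-1}(0))$; the latter implies convergence in measure, so that setting $E_\e := \{p : |\Phi_\e(p) - \theta| > \eta_\e\}$ for a suitable diagonal $\eta_\e \to 0$ will produce the exceptional set. I would first record the two basic consequences of Fubini and the results already established. Weak convergence of $e_\e(u_\e)\,dx$ to $\theta\,d\mathcal{H}^{n-1}\res P$, combined with $\chi(0) = 1$, gives
\[
\int_{B_1^{n-1}(0)} \Phi_\e(p)\,dp \;\longrightarrow\; \theta\,\omega_{n-1}, \qquad \omega_{n-1} := \mathcal{L}^{n-1}(B_1^{n-1}(0)),
\]
while \eqref{par.to.zero} yields $\int_{B_1^{n-1}(0)} R_\e(p)\,dp \to 0$, with $R_\e(p) := \sum_{i<n}\int_{-1}^{1} \chi(x^n)\,\e|\de_i u_\e|^2\,dx^n$. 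A Chebyshev-type argument then produces $\delta_\e \to 0$ and a set $A_\e \subset B_1^{n-1}(0)$ with $|A_\e| \to 0$ such that every $p \in B_1^{n-1}(0)\setminus A_\e$ satisfies $R_\e(p) \le \delta_\e$ and $\Phi_\e(p) \le \delta_\e^{-1}$; the corresponding slices are ``almost one-dimensional'' with controlled total energy.

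The core of the argument is then a pointwise lower bound $\liminf_{\e \to 0}\Phi_\e(p) \ge \theta$ on the good slices. The sharp Young-type inequality $e_\e(u_\e) \ge F(\nabla u_\e)\sqrt{2W(u_\e)}$, combined with $F(\nabla u_\e) \ge (1 - o(1))\,F(x_0,e_n)\,|\de_n u_\e|$ on $B_1^{n-1}(0)\setminus A_\e$ (which follows from $R_\e(p)\le\delta_\e$ and the $C^1$ convergence of the rescaled integrand $F^\e$ to the autonomous $F_{x_0}$), gives
\[
\Phi_\e(p) \;\ge\; F(x_0,e_n)\int_{-1}^{1}\chi(x^n)\,|\de_n H(u_\e(p,\cdot))|\,dx^n \;+\; o(1),
\]
where $H(t) := \int_0^t \sqrt{2W(s)}\,ds$ and the right-hand side is $F(x_0,e_n)$ times the $\chi$-weighted total variation of $H(u_\e(p,\cdot))$ on $(-1,1)$. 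Invoking Lemma \ref{one-dim} (the one-dimensional rigidity complementing the Modica-type bound \eqref{modica.blowup}) on each good slice, this total variation concentrates, up to $o(1)$, at a value in $c_W\mathbb{N}_0$, so that $\Phi_\e(p)$ is forced to be close to an integer multiple of $c_W F(x_0,e_n)$.

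The conclusion then follows by a contradiction combined with a dominated convergence argument: if $|\{\Phi_\e < \theta - \eta\}|$ stayed bounded below by some $\delta > 0$ along a subsequence for some $\eta > 0$, the integer quantization above would force a substantial portion of those slices to carry $\Phi_\e = o(1)$, and the equi-integrability of $\Phi_\e$ (which comes from \eqref{modica.blowup} and \eqref{bd.en.gamma.bis}, giving $\int_{A_\e}\Phi_\e \to 0$) would prevent the remainder from compensating, contradicting the average identity $\int_{B_1^{n-1}(0)}\Phi_\e\,dp \to \theta\,\omega_{n-1}$. This yields $(\theta-\Phi_\e)^+ \to 0$ in $L^1$, and then $(\Phi_\e-\theta)^+ \to 0$ follows automatically from the same average identity, giving the desired $L^1$ convergence. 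The hardest step will be matching the discrete integer-valued lower bound with the a priori continuous density $\theta$: one must rule out that $k(p,\e)\,c_W F(x_0,e_n)$ lies strictly below $\theta$ on a positive-measure set of slices, and this is precisely where the sharp equality in the averaged integral, combined with the equi-integrability of $\Phi_\e$ coming from Theorem \ref{thm:modicabound}, is needed to force $k(p,\e)$ to attain the ``right'' value on almost every slice.
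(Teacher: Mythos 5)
There is a genuine gap in the core of your argument. The paper's proof hinges on one specific mechanism that your proposal never supplies: a control on the \emph{horizontal variation} of the slice energy $f_\e(x'):=\int_{-1}^1\chi(x^n)e_\e(u_\e)(x',x^n)\,dx^n$. The paper tests the almost-divergence-free identity \eqref{div.free} for the stress-energy tensor with horizontal fields $\chi\varphi e_j$, $j<n$; the off-diagonal entries of $T_\e$ in those directions are $\e\,\de_ju_\e\,D(F^2/2)(\nabla u_\e)$, which are small in $L^1$ by \eqref{par.to.zero} and Cauchy--Schwarz, so one gets $|\int f_\e\,\de_j\varphi|\le\sigma_\e\|\nabla\varphi\|_{C^0}$ with $\sigma_\e\to0$. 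Allard's strong constancy lemma then upgrades the weak-$*$ convergence $f_\e\,dx'\rightharpoonup\theta\,dx'$ to convergence in $L^1(B_1^{n-1}(0))$, and the exceptional sets $E_\e$ come out by Chebyshev. Your proposal replaces this with (i) a quantization of $\Phi_\e(p)$ near $c_W F_{x_0}(e_n)\,\N_0$ on good slices and (ii) the average identity $\int\Phi_\e\to\theta\,\omega_{n-1}$. These two facts do not determine the pointwise limit: nothing prevents $\Phi_\e$ from taking the value $\approx k_1c_WF_{x_0}(e_n)$ on one portion of $B_1^{n-1}(0)$ and $\approx k_2c_WF_{x_0}(e_n)$ on another, with the proportions tuned so that the average is still $\theta$. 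Your contradiction step ("quantization would force a substantial portion of those slices to carry $\Phi_\e=o(1)$") is only valid if $\theta<c_WF_{x_0}(e_n)+\eta$, and at this stage of the paper one does not even know that $\theta/F_{x_0}(e_n)$ lies in $c_W\N$ --- that is the content of Theorem \ref{thm:int}, which \emph{uses} the present lemma. Equi-integrability of $\Phi_\e$ cannot substitute for the missing spatial rigidity; you genuinely need the divergence structure of $T_\e$ (or an equivalent) to rule out oscillation of the slice energy in $p$.

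Two further remarks. First, the claimed "pointwise lower bound $\liminf_\e\Phi_\e(p)\ge\theta$" is not produced by your Young-inequality computation: that computation bounds $\Phi_\e(p)$ from below by $F_{x_0}(e_n)$ times a quantized total variation $k_\e(p)c_W+o(1)$ with $k_\e(p)\ge0$ an integer that may well be $0$ (or any value unrelated to $\theta$) on a given slice. Second, applying Lemma \ref{one-dim} on a whole slice requires the interval decomposition into pieces of rescaled length $\le\Lambda$ with boundary values in $[1-\gamma,1]$ (as done in the proof of Theorem \ref{thm:int}); this is repairable, but the main obstruction is the one above. I would suggest reorganizing the proof around \eqref{div.free} and the strong constancy lemma of \cite{allard1986integrality}, keeping your quantization analysis for the integrality step where it actually belongs.
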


\begin{proof}
    We test \eqref{div.free}
    with a vector field of the form $\chi\varphi e_j$, where
    $\varphi\in C^1_c(B_1^{n-1}(0))$
    and $j=1,\dots,n-1$.
    We get
    $$\int_{B_1^{n-1}(0)} e_{\epsilon}(u_i)\de_j(\chi\varphi)=\int_{B_1^{n-1}(0)}\epsilon\de_j u_\e
     D(F^2/2)(\nabla u_\e)[\nabla\varphi]+o(\|\nabla\varphi\|_{C^0}),$$
    where the error term comes from the ambient metric $g_\e$, which converges to the Euclidean one as $\e\to0$,
    and the error term in \eqref{div.free}, which is also negligible in the limit $\e\to0$,
    as $F^\e$ converges to the autonomous integrand $F_{x_0}$.
    By \eqref{par.to.zero}, letting
    $$f_\e(x^1,\dots,x^{n-1}):=\int_{-1}^1\chi(x^n)e_{\epsilon}(u_\e(x^1,\dots,x^n))\,dx^n$$
    be the integral on the slice, we then have
    $$\left|\int_{B_1^{n-1}(0)} f_\e\de_j\varphi\right|\le \sigma_\e\|\nabla\varphi\|_{C^0}$$
    for a vanishing sequence $\sigma_\e\to0$, as $\e\to0$.
    The conclusion then follows from Allard's strong constancy lemma \cite[Theorem 1.(4)]{allard1986integrality},
    once we recall that
    $$f_\e\,dx\rightharpoonup\theta\,dx$$
    as measures on $B_1^{n-1}(0)$.
\end{proof}

We have the following simple result in $\R^n$, for autonomous $F$.

\begin{lemma}\label{one-dim}
    Given $\Lambda>0$ and $\gamma\in(0,1)$, there exists $\delta(\Lambda,\gamma,F)>0$ such that
    either
    $$|u|\ge1-C\gamma\quad\text{on }\{0\}\times(-R,R)$$
    or the energy on the slice
    \begin{equation}\label{quantiz}
    \frac{1}{F(e_n)}\int_{\{0\}\times(-R,R)}e_1(u)\in
    \bigcup_{k\in\N\setminus\{0\}}(kc_W-Ck\gamma^2,kc_W+Ck\gamma^2)\end{equation}
    for any critical point
    $$u:B_1^{n-1}(0)\times (-R,R)\to\R$$
    of the energy $E_1$,
    provided that $R\in(0,\Lambda]$, $|u|\le1$, $F(\nabla u)^2\le W(u)+\gamma^2$,
    $$\lim_{t\to\pm R}|u(0,t)|\in[1-\gamma,1],$$
    and
    $$\int_{B_1^{n-1}(0)\times (-R,R)}\sum_{j=1}^{n-1}|\de_ju|^2\le\delta.$$
\end{lemma}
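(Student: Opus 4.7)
The plan is to argue by contradiction and compactness. Suppose the conclusion fails for some fixed $\Lambda,\gamma$ and a large constant $C=C(W,F)$ to be chosen: there exist $\delta_k\to 0$ and critical points $u_k:B_1^{n-1}(0)\times(-R_k,R_k)\to[-1,1]$ of $E_1$ satisfying all hypotheses but with neither conclusion holding. From $|u_k|\le 1$ and the almost-Modica bound, $|\nabla u_k|$ is uniformly bounded, and the quasilinear elliptic Euler--Lagrange equation gives uniform $C^{1,\alpha}_{loc}$ estimates as in Proposition \ref{regularity} and Remark \ref{reg.bis}. After a subsequence, $R_k\to R_\infty\in[0,\Lambda]$ and $u_k\to u_\infty$ in $C^1_{loc}(B_1^{n-1}(0)\times(-R_\infty,R_\infty))$. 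If $R_\infty=0$, the uniform gradient bound together with $|u_k(0,\pm R_k)|\ge 1-\gamma$ forces $|u_k(0,t)|\ge 1-2\gamma$ on the whole slice for $k$ large, contradicting the failure of conclusion~1; hence I assume $R_\infty>0$.

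The $L^2$ smallness $\int\sum_{j<n}|\partial_j u_k|^2\le\delta_k\to 0$ combined with $C^1_{loc}$ convergence gives $\partial_j u_\infty\equiv 0$ for $j<n$, so $u_\infty(x',x^n)=v(x^n)$ for a smooth $v:(-R_\infty,R_\infty)\to[-1,1]$ solving the 1D Euler--Lagrange equation $F(e_n)^2v''=W'(v)$, satisfying the pointwise bound $F(e_n)^2(v')^2\le W(v)+\gamma^2$, and with $\lim_{t\to\pm R_\infty}|v(t)|\in[1-\gamma,1]$. The boundary condition passes to the limit via the uniform $W^{1,\infty}$ bound on $u_k$ and an approximation on slightly smaller intervals.

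Next I classify $v$ via the first integral $C_0:=F(e_n)^2(v')^2/2-W(v)$, which is constant along the orbit. The pointwise bound and the boundary condition together force $|C_0|\le C(W,F)\gamma^2$, and by connectedness of the range of $v$ inside the sublevel set $\{W\le\gamma^2-2C_0\}\cap[-1,1]$, the solution is either (i) confined to one of the two components near $\pm 1$, giving $|v|\ge 1-C\gamma$ pointwise; or (ii) performs some number $k\ge 1$ of monotone transitions between neighborhoods of $-1$ and $+1$ (a single heteroclinic piece when $C_0\ge 0$, multiple half-periods of an oscillation when $C_0<0$). In case (ii), the first-integral identity $F(e_n)^2(v')^2=2(W(v)+C_0)$ yields, for each transition from $v_-$ close to one of $\pm 1$ to $v_+$ close to the other,
\begin{equation*}
\int_{\text{one transition}}[F(e_n)^2(v')^2/2+W(v)]\,dt=F(e_n)\int_{v_-}^{v_+}\frac{2W(s)+C_0}{\sqrt{2(W(s)+C_0)}}\,ds=F(e_n)\,c_W+O(\gamma^2),
\end{equation*}
where the $O(\gamma^2)$ error absorbs the tail integrals $\int_{[-1,-1+C\gamma]\cup[1-C\gamma,1]}\sqrt{2W}=O(\gamma^2)$ (via the quadratic decay $W(s)\le A(1-|s|)^2$ near $\pm 1$), the $O(|C_0|)=O(\gamma^2)$ pointwise correction, and the length contribution $|C_0|\cdot 2R\le C\Lambda\gamma^2$. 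Summing over $k$ transitions, the slice energy of $v$ lies in $F(e_n)(kc_W-Ak\gamma^2,kc_W+Ak\gamma^2)$ for some $A=A(W,F,\Lambda)$.

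Finally, by $C^1$ convergence on compacts and control of the boundary strip through the uniform $W^{1,\infty}$ bound, both $\inf_t|u_k(0,t)|$ and $\frac{1}{F(e_n)}\int_{\{0\}\times(-R_k,R_k)}e_1(u_k)$ converge to the corresponding quantities for $v$; choosing $C$ in the lemma larger than the $A$ above, $u_k$ must eventually satisfy one of the two conclusions, contradicting our assumption. The main obstacle is the quantitative energy expansion per transition in the classification: the integrand $(2W+C_0)/\sqrt{2(W+C_0)}$ must be controlled uniformly for $|C_0|=O(\gamma^2)$, including near $v=\pm 1$ where $W$ vanishes, which requires splitting the range into an inner region where $W$ is bounded below and an outer region where the quadratic behavior of $W$ yields the $O(\gamma^2)$ bound. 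A secondary subtlety is the passage of the boundary condition through the limit, handled by a standard concentric-cylinders regularity argument up to the boundary of the interval.
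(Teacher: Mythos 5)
Your proposal follows the same route as the paper's proof: compactness/contradiction, reduction to a one-dimensional profile $v$ solving $F(e_n)^2v''=W'(v)$, the first integral $C_0$ with $|C_0|\le C\gamma^2$, a phase-plane classification, and an energy-per-transition computation with $O(\gamma^2)$ error. The handling of $R_\infty=0$, the passage of the boundary condition, and the tail estimate $\int_{\{1-C\gamma\le|s|\le1\}}\sqrt{2W}=O(\gamma^2)$ all match the paper. The only real difference in the quantization step is cosmetic: the paper first replaces $e_1(v)$ by $F(e_n)\sqrt{2W(v)}|v'|$ via $0\le a+b-2\sqrt{ab}\le|a-b|=|C_0|$ (total error $|C_0|\cdot 2R$) and then changes variables, whereas you change variables first and expand $(2W+C_0)/\sqrt{2(W+C_0)}$; your error accounting is equivalent, and you correctly flag the integrability issue near the turning points.

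The one step that does not hold together as written is the classification dichotomy. You deduce from $F^2(v')^2=2(W(v)+C_0)$ and the hypothesis $F(\nabla u)^2\le W(u)+\gamma^2$ that the \emph{entire range} of $v$ lies in $\{W\le\gamma^2-2C_0\}$, and then invoke connectedness to split into cases (i) and (ii). But for $\gamma$ small that sublevel set has exactly two components, one near each of $\pm1$, so connectedness of the range forces case (i) outright and case (ii) can never occur — your dichotomy is internally inconsistent, and taken literally it would make the lemma vacuous. The confinement you derive is an artifact of reading the hypothesis $F(\nabla u)^2\le W(u)+\gamma^2$ at face value: the bound actually supplied by Theorem \ref{thm:modicabound} after rescaling (see \eqref{modica.blowup}) is $F(\nabla u)^2\le 2W(u)+o(1)$, under which the first integral still satisfies $C_0\le\gamma^2/2$ (and $C_0\ge-C\gamma^2$ from the endpoint condition), but only the \emph{turning points}, where $W(v)=-C_0\le C\gamma^2$, are forced to lie near $\pm1$ — not the whole orbit. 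With that corrected input, the dichotomy should be justified as in the paper, by the standard Newtonian classification of the ODE (periodic when $C_0<0$, constant/heteroclinic when $C_0=0$, monotone when $C_0>0$), after discarding the steady state at the interior maximum of $W$ for $\gamma$ small; the rest of your argument (monotone pieces between consecutive turning points, each contributing $F(e_n)c_W+O(\gamma^2)$) then goes through unchanged.
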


In other words, if $u$ is almost constant along the first $n-1$ coordinate
directions, then its energy is close to ($F(e_n)$ times) a multiple of the energy $c_W=\int_{-1}^1\sqrt{2W}$ of the heteroclinical solution, unless $u$ is essentially constant. In the statement, $C$ is a universal constant depending just on $W$, on which
the other constants tacitly depend as well.

\begin{proof}
    Assuming for every $\delta>0$ we have a counterexample
    $u^\delta:B_1^{n-1}(0)\times(-R_\delta,R_\delta)\to\R$,
    in the limit $\delta\to0$ (up to a subsequence) we get a one-dimensional function
    $u(x)=U(x^n)$ critical for $E_1$, defined on $B_1^{n-1}(0)\times(-R,R)$ for a limiting $R=\lim_{\delta\to0}R_\delta\in[0,\Lambda]$;
    we can assume that $R>0$ since otherwise the first conclusion for $u^\delta$ trivially holds, thanks to the Lipschitz bound on $u^\delta$ (implied by the assumption $F(\nabla u^\delta)^2\le W(u^\delta)+\gamma^2$).
    
    Note that we can pass to the limit the Euler--Lagrange equation \eqref{el.div} and also
    $$\lim_{\delta\to0}\int_{\{0\}\times(-R_\delta,R_\delta)}e_1(u^\delta)=\int_{\{0\}\times(-R,R)}e_1(u),$$
    thanks to the same Lipschitz bound and Proposition \ref{regularity}, which upgrades it to convergence in $C^{1,\alpha}_{loc}$.
    Since $u$ is one-dimensional, it is smooth and satisfies
    $$U:(-R,R)\to\R,\quad F^2(e_n)U''=W'(U).$$
    Thus, $\xi:=F^2(e_n)(U')^2/2-W(U)$ is constant; actually, it is bounded by $C\gamma^2$ in absolute value:
    this follows from the uniform convergence $u^\delta\to u$ and from equicontinuity, which give
    $$\lim_{t\to\pm R}|u(0,t)|=\lim_{\delta\to0}\lim_{t\to\pm R_\delta}|u^\delta(0,t)|\in[1-\gamma,1]$$
    and thus $\lim_{t\to\pm R}W(u)(0,t)\le C\gamma^2$, as well as the fact that
    $F(\nabla u)^2\le W(u)+\gamma^2$.
    
    The solution $U$ can be extended to all of $\R$ and, by classical theory of Newtonian systems
    (viewing $-W(U)$ as the potential energy), its extension is periodic if $\xi<0$, constant or heteroclinic if $\xi=0$ (i.e., either $U\equiv\pm1$ or it is monotone, with limits $\pm1$ at infinity),
    or divergent if $\xi>0$ (i.e., goes to $\pm\infty$ at infinity, since $U'=\pm\frac{\sqrt{2W(U)+2\xi}}{F(e_n)}$ with constant sign; recall that the right-hand side is $\simeq\pm|U|$ for $|U|\ge2$).
    Here we are assuming that $\gamma$ is small enough, so that the steady state in $(-1,1)$,
    corresponding to the maximum point for $W|_{(-1,1)}$, is not one of the possibilities.

    In the second and third cases, $U$ is monotone;
    applying the elementary inequality $0\le a+b-2\sqrt{ab}\le|a-b|$ with $a:=F^2(e_n)(U')^2/2$ and $b:=W(U)$, we see that    
    \begin{equation}\label{en.vs.jac}
    \int_{\{0\}\times(-R,R)}e_1(u)
    =\int_{-R}^{R}F(e_n)\sqrt{2W(U)}|U'|+O(\gamma^2),\end{equation}
    from which the claim follows: either $|U|\ge1-\gamma$ or \eqref{quantiz} holds with $k=1$, thanks to a simple change of variables and the boundary condition
    $|U(\pm R)|\in[1-\gamma,1]$.
    
    If instead we are in the first case where $\xi<0$, then we can divide $[-R,R]$ into
    $N\ge1$ consecutive intervals $[t_j,t_{j+1}]$ where $U$ is monotone, for $j=0,\dots, N-1$,
    such that $t_0=-R$, $t_N=R$, and
    $$U(t_j)\in W^{-1}(-\xi)\quad\text{for all }j=1,\dots,N-1$$
    (as $U'(t_j)=0$ and hence $\xi=-W(U(t_j))$ for these $j$).
    Using again \eqref{en.vs.jac} and writing
    $$\int_{-R}^{R}F(e_n)\sqrt{2W(U)}|U'|=\sum_{j=0}^{N-1}\int_{t_j}^{t_{j+1}}F(e_n)\sqrt{2W(U)}|U'|,$$
    the claim follows again from the monotonicity of $U$ on each interval,
    together with the fact that $U(t_j)\in W^{-1}(-\xi)$ forces $|U(t_j)|\in [1-C\gamma,1]$,
    as $-\xi\in[0,C\gamma^2]$ and $W$ is nearly quadratic around $\pm1$.
    Thus, the claim must also hold for $u^\delta$ eventually, a contradiction.
\end{proof}

\begin{remark}\label{rem-one-dim}
    Clearly, the same also holds on a small geodesic ball in $M$,
    rescaled to have unit size. Indeed, on such rescaled ball,
    the metric is close to the Euclidean one and $F$ is almost autonomous.
\end{remark}

We are finally ready to prove integrality of the limit varifold $V$ built in Theorem \ref{varifold.constr}, which
reduces to the following statement thanks to the blow-up reduction.

\begin{theorem}\label{thm:int}
    We have $\frac{\theta}{F_{x_0}(e_n)}\in c_W\N$.
\end{theorem}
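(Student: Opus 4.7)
The plan is to reduce the statement to the one-dimensional quantization of Lemma \ref{one-dim} by choosing a good transverse point, rescaling, and partitioning the vertical direction into bounded sub-intervals to which the lemma applies.

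First I would pick $p_\varepsilon\in B_1^{n-1}(0)$ satisfying simultaneously: (i) the conclusion of Lemma \ref{constancy}, and (ii) the Fubini-type bound coming from \eqref{par.to.zero}, namely $\int_{B_\varepsilon^{n-1}(p_\varepsilon)\times(-1,1)}\sum_{j<n}|\partial_j u_\varepsilon|^2=o(\varepsilon^{n-2})$. Since Lemma \ref{constancy}'s argument applies to any cutoff $\psi\in C^1_c((-1,1))$ (not just $\chi$) with $\psi(0)=1$, and analogously for cutoffs vanishing at $0$, by a diagonal procedure we may also arrange that the slice measure $e_\varepsilon(u_\varepsilon)(p_\varepsilon,\cdot)\,dx^n$ converges weakly to $\theta\delta_0$ and in particular has total mass tending to $\theta$. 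Rescaling to $v_\varepsilon(y):=u_\varepsilon(p_\varepsilon+\varepsilon y)$ on the cylinder $C_\varepsilon:=B_1^{n-1}(0)\times(-1/\varepsilon,1/\varepsilon)$, the function $v_\varepsilon$ is critical for an approximation of $E_1$ with autonomous integrand $\to F_{x_0}$; moreover the Modica bound \eqref{modica.blowup} rescales to $F(\nabla v_\varepsilon)^2/2\le W(v_\varepsilon)+o(1)$, the total tangential gradient satisfies $\int_{C_\varepsilon}\sum_{j<n}|\partial_j v_\varepsilon|^2\to 0$, and the central slice energy converges: $\int_{-1/\varepsilon}^{1/\varepsilon}\chi(\varepsilon y^n)e_1(v_\varepsilon)(0,y^n)\,dy^n\to\theta$.

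Next, for each $\gamma>0$, the set $\{t:|v_\varepsilon(0,t)|\le 1-\gamma\}$ has measure bounded by some $M_\gamma$ independently of $\varepsilon$ (Modica forces $e_1\ge W\ge c_\gamma>0$ on this set, and the total slice energy is bounded); I would then choose $\Lambda>M_\gamma$ and partition the vertical interval at values $a_0<a_1<\dots<a_K$ with $a_{k+1}-a_k\le\Lambda$ and $|v_\varepsilon(0,a_k)|\in[1-\gamma,1]$, which exists because every window of length $>M_\gamma$ contains good partition points. On each sub-cylinder $B_1^{n-1}(0)\times(a_k,a_{k+1})$ all the hypotheses of Lemma \ref{one-dim} are verified (the tangential gradient on each piece is bounded by the total, which is $o(1)$, hence eventually $\le\delta$). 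The lemma dichotomizes each sub-interval as either \emph{trivial}, in which case $|v_\varepsilon|\ge 1-C\gamma$ along the slice and Modica yields slice energy $\le\Lambda(2C\gamma^2+o(1))$, or a \emph{transition}, carrying slice energy $F_{x_0}(e_n)\,c_W\,k_j+O(k_j\gamma^2)$ with $k_j\in\mathbb{N}\setminus\{0\}$.

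Finally I would sum and pass to the limit. The total slice energy bound forces the number of transitions to be uniformly bounded by $\theta/(c_W F_{x_0}(e_n))+O(\gamma)$; the concentration of the slice measure at $x^n=0$ forces all transitions to satisfy $\varepsilon a_k\to 0$, so $\chi(\varepsilon a_k)\to 1$ on them; and the weak concentration implies $\int_{|y^n|>R_\varepsilon}e_1(v_\varepsilon)(0,\cdot)\,dy^n\to 0$ along a suitable sequence $R_\varepsilon\to\infty$ with $R_\varepsilon\varepsilon\to 0$, so we may restrict the partition argument to the bounded central region $|y^n|\le R_\varepsilon$, where only $O(1/\gamma)$ trivial sub-intervals appear and their cumulative contribution is $O(\gamma)$. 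Combining these, we obtain
\[
\theta=\lim_{\varepsilon\to 0}\int\chi(\varepsilon y^n)e_1(v_\varepsilon)(0,y^n)\,dy^n=c_W F_{x_0}(e_n)\cdot N+O(\gamma),
\]
for some $N\in\mathbb{N}$, and sending $\gamma\to 0$ yields $\theta=c_W F_{x_0}(e_n)\cdot N$, with $N\ge 1$ because $\theta>0$. The main obstacle is the control of the cumulative trivial contribution, since naively one has $O(1/\varepsilon)$ trivial pieces each of size $O(\gamma^2)$; the resolution crucially uses the concentration of the slice measure at $x^n=0$, which localizes the analysis to a bounded (in $y^n$) region containing all transitions.
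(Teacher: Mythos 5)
Your overall architecture (good transverse point via the constancy lemma and a maximal-function/Fubini selection, rescaling, applying Lemma \ref{one-dim} to bounded vertical sub-intervals with endpoints where $|u|$ is close to $\pm1$, and summing the quantized contributions) matches the paper's, but there is a genuine gap in the step you yourself flag as the main obstacle: the cumulative contribution of the ``trivial'' sub-intervals. After rescaling, the region to which you localize is $\{|y^n|\le R_\e\}$ with $R_\e\to\infty$ (indeed, the weak concentration at $x^n=0$ only gives $\int_{|t|>s}e_\e(u_\e)(p_\e,t)\,dt\to0$ for \emph{fixed} $s$, i.e.\ $R_\e=s/\e$; a transition sitting at $t\sim\sqrt\e$ shows $R_\e$ cannot be taken bounded). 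A uniform partition of $\{|y^n|\le R_\e\}$ into pieces of length $\le\Lambda$ therefore produces $\sim R_\e/\Lambda\to\infty$ sub-intervals, not $O(1/\gamma)$ of them, and the pointwise Modica bound $e_1(v_\e)\le C\gamma^2+o(1)$ on the trivial pieces only yields a cumulative bound $O(R_\e\gamma^2)$, which diverges for fixed $\gamma$. Nothing in your argument prevents the near-constant stretches from carrying a macroscopic amount of energy; trying to infer their smallness from ``total minus transitions'' is circular, since that is exactly the identity you are trying to establish.

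The paper closes this gap with an input your proposal never invokes: the estimate \eqref{bd.en.gamma.again}, namely $\limsup_{\e\to0}\int_{-1}^1 1_{[1-C_0\gamma,1]}(|u_\e|(p_\e,t))\,e_\e(u_\e)(p_\e,t)\,dt\le C\gamma$, which is inherited (via \eqref{bd.en.gamma.bis} and a Fubini/Besicovitch selection of $p_\e$) from the global bound $\int_{\{|u_\e|>1-2\gamma\}}\e^{-1}W(u_\e)\le C\gamma$ of \eqref{W.bd.gamma} — proved by testing the Euler--Lagrange equation with $\lambda-u_\e$ and using concavity of $\sqrt W$ — combined with the vanishing discrepancy \eqref{limit.disc}. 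This bounds the slice energy over the \emph{entire} set $\{|u_\e|\ge1-C_0\gamma\}$ by $C\gamma$ at once, with no counting of intervals. Relatedly, the paper decomposes only the connected components of $\{|u_\e|(p_\e,\cdot)<1-\gamma\}$ (whose total rescaled length is $\le\Lambda$ by the energy bound), rather than partitioning the whole vertical line, so the purely trivial stretches never need to be enumerated. To repair your argument you should add this integrated near-$\pm1$ energy estimate to your list of requirements on $p_\e$ and use it in place of the $R_\e$-localization.
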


\begin{proof}
    %Fix $\gamma\in(0,1)$ small and consider the set
    %$$K_\epsilon:=\{x\in B_{10}^{n-1}(0)\times(-10,10)\,:\,|u_\epsilon(x)|\le1-2\gamma\}.$$
    Let us fix $\gamma\in(0,1)$ small and recall that, since by assumption the limit of the energy densities is supported on $P$, we have
    \begin{equation}\label{stupid.bd}
        \int_{B_{10}(0)}(1-\chi(x^n))e_\e(u_\e)(x)\,dx\to0.
    \end{equation}
    Using the previous bounds, we can find $p_\e\in B_1^{n-1}(0)$ such that
    $$\int_{-1}^1 (1-\chi(t))e_\e(u_\e)(p_\e,t)\,dt\to0,$$
    thanks to \eqref{stupid.bd}, as well as
    $$\int_{-1}^1 \chi(t)e_\e(u_\e)(p_\e,t)\,dt\to\theta$$
    by virtue of Lemma \ref{constancy},
    \begin{equation}\label{resc.max.bd}
        \frac{1}{\e^{n-1}}\int_{B_\e^{n-1}(p_\epsilon)\times[-1,1]}\sum_{j=1}^{n-1}|\de_ju_\e|^2\to0
    \end{equation}
    by \eqref{par.to.zero} and the standard weak-$(1,1)$ maximal bound,
    and finally
    \begin{equation}\label{bd.en.gamma.again}
    \limsup_{\e\to0}\int_{-1}^1 1_{[1-C_0\gamma,1]}(|u_\e|(p_\e,t))e_\e(u_\e)(p_\e,t)\,dt\le C\gamma
    \end{equation}
    thanks to \eqref{bd.en.gamma.bis}, where $C_0:=C$ is the constant appearing in Lemma \ref{one-dim}.

    We write $\{t\in(-1,1)\,:\,|u_\e|(p_\e,t)<1-\gamma\}=\bigcup_{j=1}^{N_\e} I_j$
    as a union of disjoint open intervals. Since
    $W(u_\e)\ge c(\gamma)>0$ on $\{|u_\e|<1-\gamma\}$, we have
    $$\sum_j|I_j|\le C(\gamma)\int_{-1}^1 W(u_\e)(p_\e,t)\,dt\le C(\gamma)\int_{-1}^1 \e e_1(p_\e,t)\,dt\le C(\gamma)\e=:\Lambda\e.$$
    This implies that, for $\e$ small enough, $\chi(t)$ vanishes for $t\in I_0\cup I_N$;
    in order to study
    $$\theta=\lim_{\e\to0}\int_{-1}^1 \chi(t)e_\e(u_\e)(p_\e,t)\,dt=\lim_{\e\to0}\int_{-1}^1 e_\e(u_\e)(p_\e,t)\,dt,$$
    we can then look at the intermediate intervals $I_j$ for $j=1,\dots,N-1$.
    We can now apply Lemma \ref{one-dim} (and Remark \ref{rem-one-dim}) to each of the cylinders
    $$B_\e^{n-1}(p_\e)\times I_j,\quad j\in\{1,\dots,N-1\},$$
    dilated by a factor $\e^{-1}$, so that the dilated function becomes critical for $E_1$.
    The assumptions of this lemma hold eventually, thanks to the bound $\e^{-1}|I_j|\le\Lambda$, the bound \eqref{modica.blowup}, the fact that
    $$|u_\e(p_\e,t)|\in[1-\gamma,1]\quad\text{for }t\in\de I_j,$$
    and \eqref{resc.max.bd}.
    
    We deduce that either $|u_\e(p_\e,t)|\in[1-C_0\gamma,1]$ for all $t\in I_j$ or
    $$\frac{1}{F_{x_0}(e_n)}\int_{I_j}e_\e(u_\e)(p_\e,t)\in\bigcup_{k\in\N\setminus\{0\}}(kc_W-Ck\gamma^2,kc_W+Ck\gamma^2).$$
    In particular, in the second case we have $\frac{1}{F_{x_0}(e_n)}\int_{I_j}e_\e(u_\e)(p_\e,t)\ge c_W/2$
    (provided $\gamma$ is small enough). Since the energy on the full slice $\{p_\e\}\times[-1,1]$
    is at most $2\theta$, we deduce that there is an upper bound $C$ for the number of such intervals
    of the second kind,
    independent of $\gamma$.

    Recalling \eqref{bd.en.gamma.again}, we arrive at
    $$\frac{1}{F_{x_0}(e_n)}\int_{-1}^1e_\e(u_\e)(p_\e,t)\,dt
    =\frac{1}{F_{x_0}(e_n)}\int_{[-1,1]\setminus(I_0\cup I_N)}e_\e(u_\e)(p_\e,t)\,dt=n_\e c_W+O(\gamma),$$
    for some $n_\e\in\N$, with an implied constant independent of $\e,\gamma$.
    Since
    $$\frac{\theta}{F_{x_0}(e_n)}=\lim_{\e\to0}\frac{1}{F_{x_0}(e_n)}\int_{-1}^1e_\e(u_\e)(p_\e,t)\,dt=\lim_{\e\to0}n_\e c_W+O(\gamma),$$
    we obtain that the distance of $\frac{\theta}{F_{x_0}(e_n)}$ from $c_W\N$ is bounded by $O(\gamma)$.
    Since $\gamma$ was arbitrary, we reach the conclusion.
\end{proof}

\bibliographystyle{siam}
\bibliography{biblio}

\vspace{0.5cm}

\small{
Antonio De Rosa,\\
\emph{Department of Decision Sciences and BIDSA, Bocconi University, Milan, Italy,}\\
\emph{Email address:} antonio.derosa@unibocconi.it,}

\vspace{0.2cm}

\small{
Alessandro Pigati,\\
\emph{Department of Decision Sciences and BIDSA, Bocconi University, Milan, Italy,}\\
\emph{Email address:} 
alessandro.pigati@unibocconi.it}

\end{document}